\DeclareMathOperator{\aut}{aut}
\DeclareMathOperator{\Aut}{Aut}
\DeclareMathOperator{\Hol}{Hol}
\DeclareMathOperator{\Iso}{Iso}
\DeclareMathOperator{\Lie}{Lie}
\DeclareMathOperator{\End}{End}
\DeclareMathOperator{\SO}{SO}
\DeclareMathOperator{\OO}{O}
\title{The distribution of symmetry of Lorentzian naturally reductive nilmanifolds}
\author{Brian Luporini}
\address{CONICET and Departamento de Matemática, ECEN-FCEIA, Universidad Nacional de Rosario, Av. Pellegrini 250, S2000BTP Rosario, Argentina.}
\email{\href{mailto:luporini@fceia.unr.edu.ar}{luporini@fceia.unr.edu.ar}}
\urladdr{\href{https://orcid.org/0009-0003-5935-0582}{0009-0003-5935-0582}}
\author{Silvio Reggiani}
\address{CONICET and Departamento de Matemática, ECEN-FCEIA, Universidad Nacional de Rosario, Av. Pellegrini 250, S2000BTP Rosario, Argentina.}
\email{\href{mailto:reggiani@fceia.unr.edu.ar}{reggiani@fceia.unr.edu.ar}}
\urladdr{\href{https://orcid.org/0000-0002-8549-5828}{0000-0002-8549-5828}}
\author{Francisco Vittone}
\address{CONICET and Departamento de Matemática, ECEN-FCEIA, Universidad Nacional de Rosario, Av. Pellegrini 250, S2000BTP Rosario, Argentina.}
\email{\href{mailto:vittone@fceia.unr.edu.ar}{vittone@fceia.unr.edu.ar}}
\urladdr{\href{https://orcid.org/0000-0002-3883-273X}{0000-0002-3883-273X}}
\date{\today}
\subjclass[2020]{53C50, 53C30, 53C35}
\keywords{Lorentz manifold, symmetric space, index of symmetry, naturally reductive space, nilpotent Lie group}
\theoremstyle{plain}
\newtheorem{theorem}{Theorem}[section]
\newtheorem{lemma}[theorem]{Lemma}
\newtheorem{corollary}[theorem]{Corollary}
\newtheorem{proposition}[theorem]{Proposition}
\theoremstyle{definition}
\newtheorem{definition}[theorem]{Definition}
\theoremstyle{remark}
\newtheorem{observation}[theorem]{Remark}
\numberwithin{equation}{section}
\newcommand{\R}{\mathbb{R}}
\newcommand{\got}{\mathfrak}
\DeclareMathOperator{\Ad}{Ad}
\DeclareMathOperator{\ad}{ad}
\DeclareMathOperator{\Id}{Id}
\begin{document}

\begin{abstract}
We study  $2$-step nilpotent Lorentzian Lie groups $N$, which are naturally reductive with respect to a certain class of transitive subgroups of isometries. We describe the isotropy representation and prove that its fixed points give raise to the distribution of symmetry of $N$. This generalizes some known results for the Riemannian case. 
\end{abstract}
\maketitle

\section{Introduction}

In this paper we deal with the geometry of $2$-step nilpotent Lie groups, endowed with a left-invariant Lorentzian metric which is naturally reductive with respect to a suitable presentation group.

Naturally reductive nilpotent Lie groups with left-invariant metrics have been widely studied both in the Riemannian and pseudo-Riemannian settings. 

In the Riemannian case, Wolf proved in \cite{wolf1962} that that if a connected nilpotent Lie group $N \subset \Iso(M)$ acts transitively on a differentiable manifold $M$ then $N$ is unique, it is the nilradical of the isometry group, and the transitive action of $N$ is also simple. Thus, $M$ can be identified with the nilpotent Lie group $N$ equipped with a left-invariant metric. Furthermore, the subgroup $H$ of isometries fixing the identity element coincides with the group $H^{\aut}$ of isometric automorphisms of $N$ and therefore the isometry group is the semidirect product $\Iso(M) = N \rtimes H$. Further developments on this subject were made by Kaplan in \cite{kaplan1981}, where he studied the case of $H$-type Lie groups, Wilson \cite{wilson1982} and Gordon \cite{gordon1985} among others. In particular, Gordon proved that a naturally reductive nilpotent Riemannian Lie group  with a left-invariant metric must be, at most, $2$-step nilpotent. Later, Lauret gave a description and obtained interesting geometric properties of naturally reductive nilmanifolds constructed via representations of compact Lie algebras (cf.~ \cite{lauret1998, lauret1999}).

In the pseudo-Riemannian case, Ovando gave a description of pseudo-Riemannian naturally reductive $2$-step nilpotent Lie groups with a left-invariant metric (cf.\ \cite{ovando2010}). She showed, however, that not all naturally reductive pseudo-Riemannian nilmanifolds are $2$-step nilpotent.  Moreover, in \cite{delbarco2014} del Barco and Ovando gave an example of a nilmanifold $N$ where the group $N \rtimes H$ described by Wolf in \cite{wolf1962} is smaller that $\Iso(N)$ and they gave conditions, based on the eigenvalues of the Ricci tensor, for the equality to hold. 

We are particularly interested in the Lorentzian case. In recent works, Wolf, Nikolayevsky, Chen and Zhang made a major breakthrough proving that under certain conditions a naturally reductive Lorentzian nilmanifold, with respect to the subgroup $N \rtimes H^{\aut}$ of $\Iso(N)$, must be $2$-step nilpotent as in the Riemannian case (cf.~ \cite{chen-wolf-zhang2020, nikolayevsky-wolf2022}).

In this paper, we complete the study of $2$-step nilpotent naturally reductive Lie groups $N$ with a left-invariant Lorentzian metric. In Section \ref{sec:2}, we present some general well-known results on the geometry of $N$, and characterize the existence of a flat factor in terms of some properties of the Lie algebra $\got n$ of $N$ (Theorem~\ref{lemma:injective}). In Section \ref{section:via representations}, we study those $N$ that arise via a representation $\pi: \got g \to \End(\got v)$ of a compact Lie algebra (this method was introduced by Lauret \cite{lauret1999} in the Riemannian case and generalized by Ovando \cite{ovando2010} for pseudo-Riemannian metrics). In particular, we describe the decomposition of $\pi: \got n \to \End(\got v)$ into invariant subspaces (Theorem \ref{teo:imp1}). In Section \ref{section4}, we describe the isotropy algebra $\got h^{\operatorname{aut}}$ of $N \rtimes H^{\operatorname{aut}}$ and its action on the Lie algebra $\got n = \Lie(N)$ (Theorem \ref{teo:16}). If one assumes that $\got g$ is semisimple, this description was obtained by Ovando \cite{ovando2010}. However, as we  prove in Corollary \ref{corolario 2}, this is never the case if $N$ is Lorentzian. 

The understanding of the action of the isotropy algebra is fundamental for the study of the \emph{distribution of symmetry}. Namely, if $M$ is a pseudo-Riemannian manifold, and $p\in M$, the symmetry subspace of $M$ at $p$ is defined as 
\begin{equation*}
  \got s_p = \{X_p : X \in \mathcal K_c(M) \text{ and } (\nabla X)_p = 0\},
\end{equation*}
where $\mathcal K_c(M)$ is the Lie algebra of complete Killing fields of $M$. If $M$ is homogeneous, the map $p\mapsto \got s_p$ defines an $\Iso(M)$-invariant distribution, called the distribution of symmetry of $M$.

The distribution  of symmetry a Riemannian homogeneous space was first introduced by Olmos, Tamaru and the second author in \cite{olmos2013}, and it has been widely studied in different contexts (cf.\ \cite{cardoso2024, may2021, reggiani2021, reggiani2018, berndt-olmos-reggiani2016, podesta2015}). In Section \ref{section5}, we introduce this distribution for a homogeneous pseudo-Riemannian space $M$. We prove that if it is non-degenerate, then it is integrable and its integrable manifolds are geodesically complete, homogeneous, totally geodesic, locally symmetric submanifolds of $M$ (see Lemma \ref{teo:locsym}). 

Finally, in Section \ref{section6}, we study the distribution of symmetry of a Lorentzian $2$-step nilpotent, naturally reductive Lie group with a left-invariant metric, and prove that, as in the Riemannian case (cf.\ \cite{reggiani2019}), it is given by the fixed points of the (connected) isotropy representation. 

We hope that the results presented here encourage the study of this interesting geometric invariant to the more general pseudo-Riemannian setting. 

\section{Geometry of 2-step nilpotent Lie groups}\label{sec:2}

In this section, we shall briefly recall some aspects on the geometry of 2-step nilpotent Lie groups endowed with a left-invariant metric. For more details we refer to \cite{ovando2010} and \cite{eberlein1994}.

Let $\got n$ be a 2-step nilpotent metric Lie algebra, i.e.,  $\got n$ is endowed with a non-degenerate symmetric bilinear form $\langle \cdot ,\cdot \rangle$. Assume that the center $\got z$ of $\got n$ is a non-degenerate subspace of $\got n$ and consider the orthogonal decomposition 
\begin{equation}\label{eq:descortn}
  \got n = \got z \oplus \got v
\end{equation}
with  $\got v = \got z^{\perp}$. Since $\got n$ is $2$-step nilpotent,  $[\got n, \got n] \subset \got z$. In particular, $\ad_X(\got v) \subset \got z$ for each $X \in \got v$ and so there exists a linear map $j : \got z \rightarrow \got{so}(\got v)$ such that $j(Z)(X) = (\ad_X)^*(Z)$ for each $Z \in \got z$ and $X \in \got v$ (here, $(\ad_X)^*$ is the transpose of $\ad_X$, cf.\ \cite{eberlein1994} and \cite{ovando2010}). More precisely, 
\begin{equation}\label{eq:def_map_j}
  \langle [X, Y], Z \rangle = \langle j(Z)X, Y \rangle, \qquad \text{for } X, Y \in \got v, \, Z \in \got z.
\end{equation}
From \eqref{eq:def_map_j} it follows that $\ker j = [\got n, \got n]^{\perp}$ in $\got z$ and so $j$ is injective if and only if $[\got n,\got n]=\got z$. 
Moreover, if $[\got n, \got n]$ is a non-degenerate subspace of $\got n$, then $\got z$ decomposes orthogonally as the direct sum 
\begin{equation}\label{eq:descortz}
  \got z = \ker j \oplus [\got n, \got n].
\end{equation}

\begin{observation}\label{rem:trivialsubrep}
  Even if $j$ is not injective, one has that $\cap_{Z \in \got z}\ker j(Z) = \{0\}$. In fact, if $X \in \got v$ is such that $j(Z)X = 0$ for each $Z \in \got z$ then for each $Y \in \got v$, $\langle [X, Y], Z \rangle = \langle j(Z)X, Y \rangle = 0$. Hence $[X, Y] = 0$ for every $Y \in \got v$ and so $X \in \got z$. This implies that $X = 0$. 
\end{observation}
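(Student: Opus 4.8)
The plan is to give a short direct argument: take any $X \in \got v$ lying in $\bigcap_{Z \in \got z}\ker j(Z)$, show that the defining identity \eqref{eq:def_map_j} forces $X$ to commute with all of $\got v$, promote this to $X$ being central, and then use non-degeneracy to conclude $X = 0$. The only hypotheses in play are the $2$-step nilpotency and the standing assumption that the center $\got z$ is a non-degenerate subspace; crucially, injectivity of $j$ is \emph{not} needed.

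First I would fix $X \in \got v$ with $j(Z)X = 0$ for every $Z \in \got z$ and substitute into \eqref{eq:def_map_j}: for all $Y \in \got v$ and all $Z \in \got z$ one gets $\langle [X,Y], Z\rangle = \langle j(Z)X, Y\rangle = 0$. Since $\got n$ is $2$-step nilpotent, the bracket $[X,Y]$ already lies in $[\got n,\got n]\subseteq\got z$. Thus $[X,Y]$ is an element of $\got z$ that is orthogonal to every element of $\got z$; because the restriction of $\langle\cdot,\cdot\rangle$ to $\got z$ is non-degenerate, this forces $[X,Y]=0$. As $Y \in \got v$ was arbitrary, I obtain $[X,\got v]=0$.

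Next I would upgrade this to centrality. Since $\got z$ is the center, $[X,\got z]=0$ holds automatically, and combined with $[X,\got v]=0$ the orthogonal decomposition \eqref{eq:descortn} gives $[X,\got n]=0$, i.e.\ $X \in \got z$. But $X$ was taken in $\got v = \got z^{\perp}$, and the non-degeneracy of $\got z$ ensures $\got z \cap \got v = \{0\}$; hence $X = 0$, which proves $\bigcap_{Z \in \got z}\ker j(Z) = \{0\}$.

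I do not anticipate a genuine obstacle, as the assertion is a short consequence of non-degeneracy. The only point requiring a little care is to keep track of the two distinct uses of non-degeneracy: first that $\langle\cdot,\cdot\rangle|_{\got z}$ is non-degenerate, so that an element of $\got z$ orthogonal to all of $\got z$ must vanish; and second that $\got z \cap \got v = \{0\}$, which is what rules out a nonzero central $X$ sitting in $\got z^{\perp}$. Both follow from the hypothesis that $\got z$ is a non-degenerate subspace of $\got n$.
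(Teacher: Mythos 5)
Your proof is correct and follows essentially the same route as the paper's: substitute into \eqref{eq:def_map_j}, use $2$-step nilpotency to place $[X,Y]$ in $\got z$, invoke non-degeneracy of $\langle\cdot,\cdot\rangle|_{\got z}$ to get $[X,Y]=0$, conclude $X$ is central, and then use $\got z \cap \got v = \{0\}$. You have merely made explicit the two uses of non-degeneracy that the paper's terse argument leaves implicit, which is a faithful filling-in rather than a different approach.
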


Let $N$ be the simply connected $2$-step nilpotent Lie group whose Lie algebra is $\got n$ (i.e., we identify $\got n$ with the Lie algebra of left-invariant vector fields of $N$). Then the metric on $\got n$ induces a left-invariant metric on $N$, which we will still denote by $\langle \cdot, \cdot \rangle$. 
Denote by $\nabla$ the Levi-Civita connection of $(N, \langle \cdot, \cdot \rangle)$. Recall that $\nabla$ is left-invariant, i.e., if $U, V \in \got n$, then $\nabla_U V \in \got n$. It follows from \cite{ovando2010} that
\begin{equation}\label{eq:nabla_in_2stepnilpo}
  \begin{cases}
    \nabla_{X} Y = \frac{1}{2} [X, Y],  & \text{if } X, Y \in \got v, \\
    \nabla_XZ = \nabla_ZX = - \frac{1}{2} j(Z)X, & \text{if } X \in \got v, \, Z \in \got z, \\
    \nabla_ZZ' = 0, &\text{if } Z, Z' \in \got z.
  \end{cases}
\end{equation}

Observe that the last two equalities of \eqref{eq:nabla_in_2stepnilpo} show that, if $j$ is not injective, then every element in $\ker j$ is a parallel left-invariant vector field. Hence, in the Riemannian case, the injectivity of $j$ is equivalent to the non-existence of a de Rham flat factor of $N$ (cf.\ \cite[Proposition 2.7]{eberlein1994}). We shall see that in the pseudo-Riemannian case the injectivity of $j$ is equivalent to the non-existence of a flat factor under the additional hypothesis that the commutator $[\got n, \got n]$ is non-degenerate. Recall first the de Rahm-Wu decomposition theorem (cf.\ \cite{wu1964}).

\begin{theorem}\label{theorem:Wu}
  Let $M$ be a geodesically complete simply connected pseudo-Riemannian manifold and let $p \in M$. Let $\Hol(M, p)$ be the holonomy group of $M$ at $p$ and denote by $V_0$ the maximal subspace of $M$ on which $\Hol(M, p)$ acts trivially. Suppose that $V_0$ is non-degenerate, so $T_pM$ admits a decomposition into mutually orthogonal subspaces $T_pM = V_0 \oplus V_1$. Then $M$ is isometric to a direct product $M_0 \times  M_1$, with $M_0$  flat, $T_{p_0} M_0 = V_0$, $T_{p_1} M_1 = V_1$, where $p$ identifies with $(p_0, p_1)$, and $\Hol(M, p) \simeq \Hol(M_1, p_1)$. 
\end{theorem}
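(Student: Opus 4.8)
The plan is to follow the classical de Rham argument in the form adapted by Wu to indefinite signature. First I would observe that $V_0$, being the maximal subspace on which $\Hol(M,p)$ acts trivially, is in particular invariant under the holonomy representation; since $\Hol(M,p)$ acts by isometries of $T_pM$, its orthogonal complement $V_1=V_0^{\perp}$ is likewise invariant, and the standing hypothesis that $V_0$ is non-degenerate guarantees that $T_pM=V_0\oplus V_1$ is an orthogonal direct sum of non-degenerate subspaces. This non-degeneracy is precisely the point where the pseudo-Riemannian case departs from the Riemannian one: in indefinite signature an invariant subspace may be degenerate and fail to admit an invariant complement, so without this hypothesis the splitting need not exist. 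I expect this first step to be the main obstacle, and it is exactly what the non-degeneracy assumption on $V_0$ is designed to resolve.

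Next I would propagate the algebraic splitting of $T_pM$ to a geometric one. Because $V_0$ and $V_1$ are holonomy-invariant and $M$ is simply connected, parallel transport of these subspaces along paths issuing from $p$ is independent of the chosen path and yields two smooth complementary distributions $\mathcal D_0,\mathcal D_1$ on $M$ that are parallel for the Levi-Civita connection. A parallel distribution is automatically integrable and has totally geodesic leaves, so $TM=\mathcal D_0\oplus\mathcal D_1$ is a decomposition into mutually orthogonal, complementary, parallel foliations.

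I would then invoke the local de Rham splitting, by which the metric is, in a neighborhood of each point, a product adapted to $\mathcal D_0\oplus\mathcal D_1$. The globalization to an isometry of $M$ onto the product of the leaves through $p$ uses geodesic completeness, to ensure that the leaves are themselves complete and that the natural map built from the two foliations is a diffeomorphism, together with simple connectivity, to rule out monodromy. Taking $M_0$ to be the leaf of $\mathcal D_0$ through $p$, so that $T_{p_0}M_0=V_0$, and $M_1$ the leaf of $\mathcal D_1$, so that $T_{p_1}M_1=V_1$, this produces the desired isometry with $M$ isometric to $M_0\times M_1$ and $p$ identified with $(p_0,p_1)$.

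Finally I would identify the factors. The holonomy group of a pseudo-Riemannian product at $(p_0,p_1)$ decomposes as the product $\Hol(M_0,p_0)\times\Hol(M_1,p_1)$ acting block-diagonally on $V_0\oplus V_1$. By construction $\Hol(M,p)$ acts trivially on $V_0$, which forces $\Hol(M_0,p_0)$ to be trivial and hence $\Hol(M,p)\simeq\Hol(M_1,p_1)$. Since a simply connected pseudo-Riemannian manifold with trivial holonomy is flat, the factor $M_0$ is flat, completing the argument; once the orthogonal direct-sum decomposition of the first step is secured, the remaining steps are the standard parallel-distribution and de Rham machinery.
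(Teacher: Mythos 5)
This theorem is not proved in the paper at all: it is the classical de Rham--Wu decomposition theorem, stated as background and cited directly from Wu's 1964 paper (\cite{wu1964}), so there is no internal proof to compare your attempt against. Your outline faithfully reconstructs Wu's classical argument and gets the key points right: the non-degeneracy of $V_0$ is exactly what guarantees an invariant orthogonal complement (the genuine obstruction in indefinite signature), holonomy invariance plus simple connectivity yields two well-defined parallel distributions, and the factor identification via the product holonomy and the implication ``trivial holonomy $\Rightarrow$ flat'' is correct. The one caveat is your third step: the globalization of the local product structure to a global isometry $M \simeq M_0 \times M_1$, using completeness and simple connectivity, is itself the substantive content of Wu's theorem rather than a standard lemma one can freely invoke, so as a self-contained proof this step would need to be carried out in detail (this is where Wu's adaptation of de Rham's Riemannian argument does the real work); as an outline, however, it identifies all the correct ingredients in the correct order.
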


We say that $M$ has a non-trivial \emph{flat de Rham-Wu factor} if the subspace $V_0$ where the holonomy acts trivially is non-trivial and non-degenerate. Otherwise, we say that $M$ has no flat factor. The non-degenerate manifold $M_0$ in Theorem \ref{theorem:Wu} is called the flat de Rham-Wu factor, or simply the flat factor of $M$. 

If $M$ is a pseudo-Riemannian manifold and $p \in M$, the \emph{nullity subspace} of $M$ at $p$ is given by 
\begin{align}
  \nu_p & = \{v \in T_pM: R(v, w) = 0 \text{ for all } w \in T_pM\} \notag \\
  & = \{v \in T_pM: R(v, w) u = 0 \text{ for all } w, u \in T_pM\} \label{eq:nullity} \\
  & = \bigcap_{v, w \in T_pM} \ker R(v, w) \notag,
\end{align}
where $R$ is the curvature tensor of the Levi-Civita connection of $M$, i.e., \begin{equation*}
  R(X, Y)Z = \nabla_X(\nabla_Y Z) - \nabla_Y(\nabla_X Z) - \nabla_{[X, Y]} Z.
\end{equation*}   
It follows from the Ambrose-Singer Theorem that if $M_0$ is a flat factor of $M$, then 
\begin{equation}\label{eq:v0vsnu}
  V_0=T_pM_0\subset \nu_p
\end{equation} 
for all $p \in M$ (observe however that the existence of nullity do not imply, even for Riemannian homogeneous spaces,  the existence of a flat factor, cf.~\cite{DOV}). When $M = N$ is a pseudo Riemannian $2$-step nilpotent Lie group, equality holds in \eqref{eq:v0vsnu}:

\begin{lemma}\label{lemma:injective0}
  Let $N$ be a simply connected $2$-step nilpotent Lie group with a left-invariant metric $\langle \cdot, \cdot \rangle$ such that the center $\got z$ is non-degenerate. Let $\got v = \got z^{\bot}$ and $j: \got z \to \got{so}(\got v)$ be defined as \eqref{eq:def_map_j}. We identify $T_eN$ with $\got n$ in the usual way. Let $V_0 \subset T_eN \simeq \got n$ be the maximal subspace on which the holonomy group $\Hol(N, e)$ of $N$ at $e$ acts trivially, and let $\nu_e$ be the nullity subspace  of $N$ at $e$. Then 
  \begin{equation*}
    V_0=\nu_e=\ker j.
  \end{equation*}
\end{lemma}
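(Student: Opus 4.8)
The plan is to prove both equalities at once by establishing the cyclic chain of inclusions
\begin{equation*}
  \ker j \subseteq V_0 \subseteq \nu_e \subseteq \ker j,
\end{equation*}
which then collapses to equality everywhere. The first two inclusions are soft, while the third one carries all the geometric content.

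For $\ker j \subseteq V_0$ I would argue as already indicated after \eqref{eq:nabla_in_2stepnilpo}: if $Z_0 \in \ker j$ then $j(Z_0) = 0$, so \eqref{eq:nabla_in_2stepnilpo} gives $\nabla_U Z_0 = 0$ for every $U \in \got n$; thus the left-invariant field $Z_0$ is parallel and hence fixed by $\Hol(N, e)$, so $(Z_0)_e \in V_0$. For $V_0 \subseteq \nu_e$, since $\Hol(N, e)$ fixes $V_0$, the Ambrose--Singer theorem forces every curvature operator $R(X, Y)$ to vanish on $V_0$; the pair symmetry $\langle R(X, Y)v, u\rangle = \langle R(v, u)X, Y\rangle$ together with the non-degeneracy of the metric upgrades this to $R(v, \cdot) = 0$ for $v \in V_0$. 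This is exactly the inclusion recorded in \eqref{eq:v0vsnu}.

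The heart of the matter is $\nu_e \subseteq \ker j$. I would first read off from \eqref{eq:nabla_in_2stepnilpo} the two curvature identities
\begin{equation*}
  R(X, Z)X' = -\tfrac14\,[X, j(Z)X'], \qquad R(X, Y)Z = \tfrac14\,[Y, j(Z)X] - \tfrac14\,[X, j(Z)Y],
\end{equation*}
valid for $X, X', Y \in \got v$ and $Z \in \got z$, and note that $R(Z_0, \cdot) = 0$ whenever $Z_0 \in \ker j$. I will repeatedly use the following elementary fact, which is the pseudo-Riemannian substitute for Riemannian positivity arguments: if $W \in \got v$ satisfies $[Y, W] = 0$ for all $Y \in \got v$, then $W = 0$. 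Indeed, $0 = \langle [Y, W], Z\rangle = \langle j(Z)Y, W\rangle = -\langle Y, j(Z)W\rangle$ for all $Y, Z$, so $j(Z)W = 0$ for every $Z$ by non-degeneracy of $\met$ on $\got v$, and then $W = 0$ by Remark \ref{rem:trivialsubrep}.

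Now take $v = X_0 + Z_0 \in \nu_e$ with $X_0 \in \got v$ and $Z_0 \in \got z$, and use the orthogonal splitting $\got n = \got v \oplus \got z$. Evaluating $0 = R(v, Y)X'$ for $Y, X' \in \got v$, its $\got z$-component is $R(Z_0, Y)X' = \tfrac14[Y, j(Z_0)X']$, so $[Y, j(Z_0)X'] = 0$ for all $Y$; the fact above gives $j(Z_0)X' = 0$ for every $X'$, that is $Z_0 \in \ker j$. Consequently $R(Z_0, \cdot) = 0$, so $v$ and $X_0$ have the same curvature operators. From $0 = R(X_0, Z)X = -\tfrac14[X_0, j(Z)X]$ we get $[X_0, j(Z)X] = 0$ for all $X \in \got v$, $Z \in \got z$; substituting this into $0 = R(X_0, Y)Z = -\tfrac14[X_0, j(Z)Y] + \tfrac14[Y, j(Z)X_0]$ cancels the first bracket and leaves $[Y, j(Z)X_0] = 0$ for all $Y, Z$. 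The fact above then yields $j(Z)X_0 = 0$ for every $Z$, whence $X_0 = 0$ by Remark \ref{rem:trivialsubrep}, so $v = Z_0 \in \ker j$. The main obstacle is precisely this last elimination: the curvature bookkeeping and, above all, the non-obvious step of feeding $[X_0, j(Z)Y] = 0$ back into $R(X_0, Y)Z$ to isolate $j(Z)X_0$. Remark \ref{rem:trivialsubrep} is indispensable here, since the naive Riemannian shortcut of deducing $j(Z)X_0 = 0$ from $\langle j(Z)X_0, j(Z)X_0\rangle = 0$ is unavailable for an indefinite metric.
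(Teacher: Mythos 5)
Your proof is correct and takes essentially the same approach as the paper: the cyclic chain $\ker j \subseteq V_0 \subseteq \nu_e \subseteq \ker j$, with parallel vector fields and Ambrose--Singer for the soft inclusions, and the explicit curvature formulas from \eqref{eq:nabla_in_2stepnilpo} combined with Remark \ref{rem:trivialsubrep} for the hard one. The only difference is bookkeeping: the paper evaluates $R(A,B)W$ with $(A,B) \in \got v \times \got z$ and the nullity vector $W$ in the last slot, so both components decouple and are eliminated in one step, whereas you put $v$ in the first slot and therefore need the extra substitution step feeding $[X_0, j(Z)Y] = 0$ back into $R(X_0,Y)Z$ to isolate $j(Z)X_0$ --- both versions are valid.
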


\begin{proof}
  Let $Z \in \ker j$. From \eqref{eq:nabla_in_2stepnilpo} it follows that $\nabla_{Z'}Z = 0$ if $Z' \in \got z$ and $\nabla_X Z = -\frac{1}{2}j(Z)X = 0$ if $X\in \got v$. Hence $Z$ is a parallel vector field and so $Z \in V_0$. We conclude that $\ker j\subset V_0\subset \nu_e$. 

  Let now $W \in \nu_e$. Write $W = Z + X$ with $Z \in \got z$ and $X \in \got v$. Then for each $A, B \in \got n$,
  \begin{equation*}
    0 = R(A, B) W = R(A, B) Z + R(A, B) X.
  \end{equation*}
  From \eqref{eq:nabla_in_2stepnilpo} one easily gets (see also \cite{ovando2010} or \cite{eberlein1994}) that if $A\in \got v$ and $B\in\got z$  then 
  \begin{equation*}
    \begin{cases}
      R(A, B) Z = -\frac{1}{4}(j(B) \circ j(Z))A \in \got v,\\
      R(A, B) X = -\frac{1}{4}[A, j(B)X] \in \got z.
    \end{cases}
  \end{equation*}
  So for each $A\in \got v$ and $B\in \got z$, it follows that $R(A, B) W = 0$ if and only if $R(A, B) Z = R(A, B) X =0$. Now, if $R(A, B) Z = 0$ for each $B \in \got z$, then $j(Z) A \in \cap_{B \in \got z} \ker j(B)=\{0\}$ (see Remark \ref{rem:trivialsubrep}). So $j(Z)A = 0$ for each $A \in \got v $ and hence $Z\in \ker j$. If $R(A, B) X = 0$ for each $A \in \got v$, then $j(B)X \in \got z$ for every $B \in \got z$. But $j(B)X \in \got v$, and so $j(B)X = 0$ for each $B\in \got z$. Again from Remark \ref{rem:trivialsubrep}, we obtain that $X=0$. We conclude that $W = Z \in \ker j$, and so $\nu_e \subset \ker j$.
\end{proof}

\begin{theorem}\label{lemma:injective}
  Let $N$ be a simply connected $2$-step nilpotent Lie group with a pseudo-Riemannian left-invariant metric $\langle\cdot,\cdot\rangle$ such that the center $\got z$ of the Lie algebra $\got n$ of $N$ is a non-degenerate subspace of $\got n$. Let $\got v=\got z^{\bot}$ and $j:\got z\to \got{so}(\got v)$ be defined as \eqref{eq:def_map_j}. Then the following statements are equivalent:
  \begin{enumerate}
  \item $j$ is injective.
  \item  $[\got n,\got n]$ is non-degenerate and $N$ has no de Rham-Wu flat factor. 
  \end{enumerate}
\end{theorem}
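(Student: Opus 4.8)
The plan is to reduce everything to the identification $V_0 = \ker j$ established in Lemma \ref{lemma:injective0}, and then argue purely with the linear algebra of non-degenerate subspaces. Throughout I would use that, since $\got z$ is central, $[\got n, \got n] = [\got v, \got v] \subset \got z$, and that by \eqref{eq:def_map_j} one has $Z \in \ker j$ if and only if $\langle [X,Y], Z\rangle = 0$ for all $X, Y \in \got v$; that is, $\ker j$ is precisely the orthogonal complement of $[\got n, \got n]$ inside $\got z$. Since $\got z$ is non-degenerate, the dimension identity $\dim[\got n,\got n] + \dim\ker j = \dim \got z$ holds, so $j$ is injective if and only if $[\got n, \got n] = \got z$.

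For the implication $(1)\Rightarrow(2)$: assuming $j$ injective, the remark above gives $[\got n, \got n] = \got z$, which is non-degenerate by hypothesis; and by Lemma \ref{lemma:injective0}, $V_0 = \ker j = \{0\}$, so the holonomy acts trivially only on the trivial subspace and $N$ has no flat factor.

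For the implication $(2)\Rightarrow(1)$: I would first use that $[\got n, \got n]$ is non-degenerate to obtain, as in \eqref{eq:descortz}, the orthogonal splitting $\got z = \ker j \oplus [\got n, \got n]$; since the orthogonal complement of a non-degenerate subspace of a non-degenerate space is again non-degenerate, $\ker j$ is non-degenerate. By Lemma \ref{lemma:injective0}, $V_0 = \ker j$ is therefore non-degenerate. The hypothesis that $N$ has no flat factor means, by definition, that $V_0$ is either trivial or degenerate; being non-degenerate, it must be trivial. Hence $\ker j = \{0\}$ and $j$ is injective.

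The content is almost entirely packaged inside Lemma \ref{lemma:injective0}; what remains is bookkeeping of non-degeneracy. The one genuinely delicate point — and the reason the extra hypothesis that $[\got n, \got n]$ be non-degenerate cannot be dropped — is specific to the pseudo-Riemannian setting: in the Riemannian case every subspace is non-degenerate, so $V_0 = \ker j$ is automatically non-degenerate and the de Rham--Wu theorem always splits off a flat factor of dimension $\dim \ker j$. In the indefinite case $\ker j$ may be degenerate, and then the holonomy acting trivially on it need not produce a genuine de Rham--Wu factor; it is exactly the non-degeneracy of $[\got n, \got n]$ that rules this out by forcing $\ker j$ to be non-degenerate. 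I expect this to be the main point to state carefully, together with keeping in mind that ``no flat factor'' is the negation of ``$V_0$ non-trivial \emph{and} non-degenerate'' rather than simply ``$V_0 = \{0\}$''.
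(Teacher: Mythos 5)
Your proposal is correct and follows essentially the same route as the paper: both directions reduce to the identity $V_0=\ker j$ from Lemma \ref{lemma:injective0}, combined with the orthogonal splitting \eqref{eq:descortz} to see that non-degeneracy of $[\got n,\got n]$ forces $\ker j$ to be non-degenerate, and with the precise meaning of ``no flat factor'' as ``$V_0$ trivial or degenerate.'' Your write-up merely makes explicit a few steps the paper leaves implicit (the dimension count and the non-degeneracy of the orthogonal complement), so there is nothing to fix.
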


\begin{proof}
If $j$ is inyective, then $\ker j=\{0\}$ and so $[\got n,\got n]=\got z$ is non-degenerate. Moreover, from Lemma \ref{lemma:injective0}, $\nu_e=\ker j$ is trivial, and so $N$ has no flat factor. 

Now if $N$ has no flat factor, then either $V_0=\ker j$ is degenerate or $V_0=\{0\}$. The first situation can not happen since $[\got n,\got n]$ is non-degenerate. Hence $j$ is inyective.
\end{proof}

\begin{observation}
  Clearly the hypothesis of $[\got n,\got n]$ being non-degenerate can not be dropped from Theorem \ref{lemma:injective}. In fact, a degenerate $[\got n,\got n]$ readily implies that $\ker j$ is non trivial and degenerate. Hence the subspace $V_0$ of fixed points of the holonomy group $\Phi$ of $N$ at $e$ is degenerate, and so $N$ has no  de Rham-Wu flat factor, even though $j$ is not injective.  
\end{observation}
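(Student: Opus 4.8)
The plan is to verify the chain of claims in the Remark in order: a degenerate $[\got n,\got n]$ forces $\ker j$ to be both non-trivial and degenerate; by Lemma \ref{lemma:injective0} this makes $V_0=\ker j$ degenerate; and by the very definition of a flat de Rham-Wu factor this means $N$ has no such factor, even though $\ker j\neq\{0\}$ shows that $j$ is not injective. The upshot is a situation in which ``$N$ has no flat factor'' holds while ``$j$ is injective'' fails, so the equivalence of Theorem \ref{lemma:injective} would break if the non-degeneracy of $[\got n,\got n]$ were removed from statement (2). The starting point is the identity $\ker j=[\got n,\got n]^{\perp}\cap\got z$ recorded just before Remark \ref{rem:trivialsubrep}, together with the facts that $[\got n,\got n]\subset\got z$ and that $\got z$ is non-degenerate.

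The core is a short piece of linear algebra, and the only point requiring care is bookkeeping of where orthogonal complements are taken. Since $[\got n,\got n]\subset\got z$ and $\got z$ is non-degenerate, the radical of $[\got n,\got n]$, namely $[\got n,\got n]\cap[\got n,\got n]^{\perp}$, coincides with the radical computed inside $\got z$; and because $[\got n,\got n]^{\perp}\cap\got z=\ker j$, this radical equals $[\got n,\got n]\cap\ker j$. Thus $[\got n,\got n]$ being degenerate means exactly that $[\got n,\got n]\cap\ker j\neq\{0\}$. Choosing a non-zero $W\in[\got n,\got n]\cap\ker j$ immediately gives $\ker j\neq\{0\}$, so $\ker j$ is non-trivial. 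Moreover, every $U\in\ker j$ satisfies $U\perp[\got n,\got n]$ by the description $\ker j=[\got n,\got n]^{\perp}\cap\got z$; since $W\in[\got n,\got n]$, we get $\langle W,U\rangle=0$ for all $U\in\ker j$. Hence $W$ is a non-zero vector of $\ker j$ orthogonal to all of $\ker j$, which is precisely the statement that $\ker j$ is degenerate.

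It then remains to translate this into the statement about the flat factor. By Lemma \ref{lemma:injective0} one has $V_0=\ker j$, so $V_0$ is non-trivial and degenerate. By the definition adopted after Theorem \ref{theorem:Wu}, a flat de Rham-Wu factor requires $V_0$ to be non-degenerate; therefore $N$ has no flat factor. At the same time $\ker j\neq\{0\}$ says that $j$ is not injective, and $[\got n,\got n]\neq\got z$. This exhibits a case satisfying ``no flat factor'' but not ``$j$ injective'', confirming that the hypothesis that $[\got n,\got n]$ be non-degenerate cannot be dropped from Theorem \ref{lemma:injective}. I do not expect any serious obstacle here; the single delicate step is keeping consistent track of whether $\perp$ is taken in $\got n$ or in $\got z$, which is what guarantees both that $\ker j$ is non-trivial and that the \emph{same} witness $W$ certifies its degeneracy.
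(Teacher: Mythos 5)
Your proposal is correct and follows exactly the chain of reasoning the paper's remark sketches: you identify the radical of $[\got n,\got n]$ with $[\got n,\got n]\cap\ker j$ via the identity $\ker j=[\got n,\got n]^{\perp}\cap\got z$, observe that a single nonzero witness $W$ in this intersection certifies both non-triviality and degeneracy of $\ker j$, and then invoke Lemma \ref{lemma:injective0} and the definition of a flat de Rham--Wu factor. The careful bookkeeping of where $\perp$ is taken (in $\got n$ versus in $\got z$) is precisely the detail the paper leaves implicit, and your treatment of it is sound.
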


Denote by $\Iso(N)$ the full isometry group of $N$ and let $H = \Iso(N)_e$ be the isotropy group at the identity element $e\in N$. We have that 
\begin{equation*}
  \Iso(N) = L_N \cdot H
\end{equation*}
where $L_N\simeq N$ is the subgroup of $\Iso(N)$ consisting of the left-translations. Observe that $H\cap L_N = \{ \operatorname{Id} \}$.

Consider the Lie subgroup $H^{\operatorname{aut}}$ of $H$ consisting of the isometric automorphism of $N$, i.e., 
\begin{equation*}\label{eq:haut}
  H^{\operatorname{aut}}=\Aut(N)\cap \Iso(N)=\Aut(N)\cap H
\end{equation*}
and the Lie subgroup $\Iso^{\operatorname{aut}}(N)$ of $\Iso(N)$ given by 
\begin{equation*}
  \Iso^{\aut}(N) = L_N \cdot H^{\operatorname{aut}}.
\end{equation*}
It is standard to see that  $L_N$ is a normal subgroup of $\Iso(N)^{\operatorname{aut}}$ and hence (cf.\ \cite{delbarco2014})
\begin{equation}\label{eq:2}
  \Iso^{\operatorname{aut}}(N) = L_N  \rtimes H^{\operatorname{aut}}\simeq N  \rtimes H^{\operatorname{aut}.}
\end{equation}

Since $N$ is simply connected, $\Aut(N) \simeq \Aut(\got n)$. Therefore 
\begin{equation*}
    H^{\operatorname{aut}} \simeq \OO(\got n) \cap \Aut(\got n),
\end{equation*}
where $\OO(\got n)$ is the orthogonal group of $\got n$ with respect to the given metric. With these identifications, the Lie algebra of $\Iso^{\operatorname{aut}} (N)$ is $\got{iso}^{\operatorname{aut}} (N) \simeq \got n \rtimes \got h^{\operatorname{aut}}$ where
\begin{equation}\label{eq:haut_lie}
    \got h^{\operatorname{aut}} = \operatorname{Der}(\got n) \cap \got{so}(\got n)
\end{equation}
is the Lie algebra of skew-symmetric derivations of $\got n$.  

Recall that under the identification $\got{iso}^{\aut}(N) \simeq \got n \rtimes \got h^{\aut}$, if $U, V \in \got n$ and $A, B \in \got h^{\aut}$, the Lie bracket of $\got{iso}^{\aut}(N)$ is given by
\begin{align}\label{eq:corcheteiso}
  [U, V]_{\got{iso}^{\aut}(N)} = [U, V]_{\got n}, && [A, B]_{\got{iso}^{\aut}(N)} = [A, B]_{\got h^{\aut}}, && [A, U]_{\got{iso}^{\aut}(N)}=A(U).
\end{align}

\begin{observation} 
    If the metric on $\got n$ is positive definite (i.e. the left-invariant metric induced on $N$ is Riemannian), then $\Iso(N)=\Iso^{\operatorname{aut}}(N)$ (cf.~ \cite{wolf1962}). This is no longer true for a pseudo-Riemannian nilmanifold (cf.~ \cite{delbarco2014}).
\end{observation}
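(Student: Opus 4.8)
The plan is to treat the two assertions separately: the positive-definite statement I would prove constructively by reducing the isotropy to automorphisms, while the pseudo-Riemannian failure I would establish by exhibiting the relevant obstruction rather than by a general argument.

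First I would prove $\Iso(N)=\Iso^{\aut}(N)$ in the Riemannian case by showing $H=H^{\aut}$, where $H=\Iso(N)_e$. The decisive input is Wolf's structural theorem \cite{wolf1962}: when $N$ acts simply transitively by isometries on a connected Riemannian manifold, the group $L_N\simeq N$ of left translations is precisely the nilradical of $\Iso(N)$ and is therefore a \emph{normal} subgroup. Granting normality, the rest is a direct conjugation computation. Fix $h\in H$. Since $L_N\trianglelefteq\Iso(N)$, for each $n\in N$ the conjugate $hL_nh^{-1}$ is again a left translation, say $hL_nh^{-1}=L_{\phi(n)}$; the resulting map $\phi\colon N\to N$ is a group automorphism (it is bijective because conjugation by $h$ is, and multiplicative because $hL_{n_1}L_{n_2}h^{-1}=(hL_{n_1}h^{-1})(hL_{n_2}h^{-1})$, using the injectivity of $n\mapsto L_n$). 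Evaluating at $e$ and using $h(e)=e$ gives
\[
  h(n)=h\bigl(L_n(e)\bigr)=\bigl(hL_nh^{-1}\bigr)\bigl(h(e)\bigr)=L_{\phi(n)}(e)=\phi(n),
\]
so $h=\phi$ is an automorphism of $N$. As $h$ is also an isometry, $h\in\Aut(N)\cap\Iso(N)=H^{\aut}$; the reverse inclusion $H^{\aut}\subset H$ is immediate. Hence $H=H^{\aut}$ and $\Iso(N)=L_N\cdot H=L_N\cdot H^{\aut}=\Iso^{\aut}(N)$.

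The single step carrying all the weight is the normality of $L_N$, and this is exactly where positive-definiteness is used. In the Riemannian setting the isotropy $H$ is compact and $\Iso(N)$ acts properly, which forces the transitive nilpotent subgroup to coincide with the nilradical and hence to be normal; I would invoke \cite{wolf1962} for this rather than reprove it. The main obstacle in the whole statement is precisely identifying that this is the mechanism that breaks down for indefinite signature: once a Lorentzian metric is allowed, neither the compactness of the isotropy nor the properness of the action survives, $L_N$ need no longer be the full nilradical, and there may exist isometries fixing $e$ that are not automorphisms of $N$.

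For the second assertion I would not attempt a general argument but exhibit the obstruction via the del Barco--Ovando example \cite{delbarco2014}. There one has a pseudo-Riemannian $2$-step nilmanifold $N$ for which $\Iso^{\aut}(N)=L_N\rtimes H^{\aut}$ is a \emph{proper} subgroup of $\Iso(N)$, equivalently $L_N$ fails to be normal in $\Iso(N)$, so that the conjugation argument above can no longer run. I would recall their criterion, phrased in terms of the eigenvalue structure of the Ricci tensor, which distinguishes when the Wolf description $N\rtimes H^{\aut}$ exhausts $\Iso(N)$ from when it does not, and point to their explicit nilmanifold as a witness to the strict inequality $\Iso^{\aut}(N)\subsetneq\Iso(N)$.
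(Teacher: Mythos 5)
Your proposal is correct and takes essentially the same route as the paper, which offers no argument of its own here: the remark rests entirely on citing Wolf \cite{wolf1962} for the Riemannian equality and the del Barco--Ovando example \cite{delbarco2014} for its failure in indefinite signature, exactly the two pillars you invoke. The extra detail you supply --- deducing $H=H^{\aut}$ from the normality of $L_N$ via the conjugation computation $hL_nh^{-1}=L_{\phi(n)}$, and noting that $\Iso(N)=\Iso^{\operatorname{aut}}(N)$ is in fact equivalent to $L_N$ being normal in $\Iso(N)$ --- is the standard argument and is sound.
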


We are interested in characterizing when $N$ is naturally reductive with respect to the presentation group $\Iso^{\aut}(N)$. 

Let $M=G/H$ be a pseudo-Riemannian homogeneous space $M=G/H$, with $G$ a Lie subgroup of $\Iso(M)$ and $H=G_e$, the isotropy at the identity $e$. Let $\got g$ and $\got h$ be the Lie algebras of $G$ and $H$ respectively. Recall that $M$ is  \emph{naturally reductive} with respect to $G$ if there exists a subspace $\got m$ of $\got g$ such that \begin{equation}\label{eq:reddesc}\got g = \got m \oplus \got h\ \text{ with }\ \ad_{\got g}(\got h) \got m \subset \got m\end{equation} and  for every $U,V,W \in \got m$, 
\begin{equation}\label{eq:DefNatRed}
    \langle [U, V]_{\got m}, W \rangle + \langle V, [U,W]_{\got m} \rangle = 0,
\end{equation}
where $[\cdot, \cdot]_{\got m}$ denotes the $\got m$-component of the Lie bracket in $\got g$. 

If $N$ is a $2$-step nilpotent simply connected Lie group with a left-invariant pseudo-Riemannian metric, then $\Iso^{\aut}(N)$ acts transitively on $N$, since it contains all left-translations, and $\Iso^{\aut}(N)_e=H^{\aut}$. Hence 
\begin{equation*}
    N=\Iso^{\aut}(N)/H^{\aut}
\end{equation*}
is a pseudo-Riemannian homogeneous space. 

One can characterize when $N$ is naturally reductive with respect to the transitive group $\Iso^{\operatorname{aut}}(N)$ in terms of the map $j:\got z\to \got{so}(\got v)$:
\begin{lemma}[\cite{ovando2010}]\label{lemma:2-stepNatRedu}
    If $N$ is  naturally reductive for the group $\Iso^{\operatorname{aut}}(N)$ then $j(\got z)$ is a subalgebra of $\got{so}(\got v)$ and for every $Z\in \got z$, there exists an element $\tau_Z\in \got{so}(\got z)$ such that 
    \begin{equation}\label{eq:reljtau}
        [j(Z),j(Z')] = j(\tau_Z(Z')), \quad Z'\in \got z.
    \end{equation}
    If $j$ is injective, then the converse holds. 
\end{lemma}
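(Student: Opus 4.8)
The plan is to realize any reductive complement as the graph of a linear map $\phi\colon\got n\to\got h^{\aut}$ and to translate the naturally reductive condition \eqref{eq:DefNatRed} into an identity that can be tested on the pieces $\got v$ and $\got z$. First I would record the structure of the isotropy algebra: every $A\in\got h^{\aut}=\operatorname{Der}(\got n)\cap\got{so}(\got n)$ preserves the center $\got z$ (hence also $\got v=\got z^{\perp}$), so it splits as $A=A_{\got v}\oplus A_{\got z}$ with $A_{\got v}\in\got{so}(\got v)$ and $A_{\got z}\in\got{so}(\got z)$, and the derivation property translated through \eqref{eq:def_map_j} reads $[A_{\got v},j(Z)]=j(A_{\got z}Z)$ for all $Z\in\got z$. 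Since $\got g=\got{iso}^{\aut}(N)=\got n\oplus\got h^{\aut}$ and $\dim\got m=\dim\got n$, the projection of $\got m$ onto $\got n$ along $\got h^{\aut}$ is an isomorphism, so $\got m=\{U+\phi(U):U\in\got n\}$ for a unique linear $\phi\colon\got n\to\got h^{\aut}$, and the metric on $\got m\cong T_eN$ is the pullback of the one on $\got n$.

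For the forward implication I would substitute $[U,V]_{\got m}$, whose $\got n$-component is $[U,V]_{\got n}-\phi(V)(U)+\phi(U)(V)$ by \eqref{eq:corcheteiso}, into \eqref{eq:DefNatRed}. The two terms carrying $\phi(U)$ cancel because $\phi(U)\in\got{so}(\got n)$ is skew, leaving
\[
  \langle[U,V]_{\got n},W\rangle+\langle V,[U,W]_{\got n}\rangle=\langle\phi(V)(U),W\rangle+\langle V,\phi(W)(U)\rangle
\]
for all $U,V,W\in\got n$. Testing this on $U=X,\,V=Y\in\got v$ and $W=Z\in\got z$ collapses, by \eqref{eq:def_map_j} and the orthogonality of $\got v$ and $\got z$, to $\langle j(Z)X,Y\rangle=\langle\phi(Z)_{\got v}X,Y\rangle$, whence $\phi(Z)_{\got v}=j(Z)$. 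Feeding this into the derivation relation $[\phi(Z)_{\got v},j(Z')]=j(\phi(Z)_{\got z}Z')$ gives $[j(Z),j(Z')]=j(\tau_Z Z')$ with $\tau_Z:=\phi(Z)_{\got z}\in\got{so}(\got z)$; in particular $j(\got z)$ is closed under brackets. Note this direction never uses injectivity.

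For the converse, assuming $j$ injective, I would define $\phi$ by $\phi|_{\got v}=0$ and $\phi(Z)=j(Z)\oplus\tau_Z$; injectivity makes $\tau_Z$ unique and $Z\mapsto\tau_Z$ linear, and the hypothesis $[j(Z),j(Z')]=j(\tau_Z Z')$ is exactly the condition for $\phi(Z)$ to be a skew derivation, so $\phi(Z)\in\got h^{\aut}$. It then remains to check that $\got m=\{U+\phi(U)\}$ is $\ad(\got h^{\aut})$-invariant and that \eqref{eq:DefNatRed} holds. For the latter I would verify the displayed identity case by case according to the $\got v$- or $\got z$-type of $U,V,W$; all cases are automatic except the one with all three arguments in $\got z$, which reduces to $\tau_{Z'}Z''=-\tau_{Z''}Z'$, itself a consequence of the skew-symmetry of the bracket and the injectivity of $j$.

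I expect the reductivity of $\got m$, i.e.\ the $\ad(\got h^{\aut})$-equivariance of $\phi$, to be the main obstacle, since this is where injectivity is genuinely needed. The equivariance $[A,\phi(Z)]_{\got h^{\aut}}=\phi(A_{\got z}Z)$ splits into blocks: the $\got v$-block $[A_{\got v},j(Z)]=j(A_{\got z}Z)$ is immediate from the derivation relation, but the $\got z$-block demands $[A_{\got z},\tau_Z]=\tau_{A_{\got z}Z}$. I would establish this by applying $j$ to both sides, rewriting $j(A_{\got z}\tau_Z Z')$ and $j(\tau_Z A_{\got z}Z')$ through the defining relation of $\tau$ and the derivation relation for $A$, and then using the Jacobi identity in $\got{so}(\got v)$ to simplify $[A_{\got v},[j(Z),j(Z')]]$ to $[[A_{\got v},j(Z)],j(Z')]=j(\tau_{A_{\got z}Z}Z')$; injectivity of $j$ then strips the outer $j$ and yields the identity.
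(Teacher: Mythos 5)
Your proposal is correct, but there is nothing in the paper to compare it against: the paper states this lemma with the citation \cite{ovando2010} and gives no proof of its own, delegating entirely to Ovando. Your argument is a valid, self-contained proof. The key device — realizing any reductive complement as the graph $\got m=\{U+\phi(U):U\in\got n\}$ of a linear map $\phi\colon\got n\to\got h^{\aut}$, using that skew-symmetric derivations preserve $\got z$ and $\got v$ so that the derivation property becomes $[A_{\got v},j(Z)]=j(A_{\got z}Z)$ — is exactly the right way to make the statement checkable, and your translation of \eqref{eq:DefNatRed} into the identity $\langle[U,V]_{\got n},W\rangle+\langle V,[U,W]_{\got n}\rangle=\langle\phi(V)(U),W\rangle+\langle V,\phi(W)(U)\rangle$ is correct (the skewness of $\phi(U)$ kills the two offending terms, and testing on $(\got v,\got v,\got z)$ does give $\phi(Z)_{\got v}=j(Z)$, whence \eqref{eq:reljtau} with $\tau_Z=\phi(Z)_{\got z}$; injectivity is indeed never used here). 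In the converse, you correctly isolate the three things that actually need proof: that $j(Z)\oplus\tau_Z$ is a skew derivation (which, on the $\got v\times\got v$ part, is equivalent to \eqref{eq:reljtau} together with $\tau_Z\in\got{so}(\got z)$), that the only non-trivial case of \eqref{eq:DefNatRed} is the all-$\got z$ case, which reduces to $\tau_{Z'}Z''=-\tau_{Z''}Z'$ and follows from $j(\tau_{Z'}Z''+\tau_{Z''}Z')=[j(Z'),j(Z'')]+[j(Z''),j(Z')]=0$ plus injectivity, and that the $\ad(\got h^{\aut})$-invariance of $\got m$ amounts to the block identities $[A_{\got v},j(Z)]=j(A_{\got z}Z)$ (the derivation relation) and $[A_{\got z},\tau_Z]=\tau_{A_{\got z}Z}$, the latter obtained by composing the defining relations with the Jacobi identity in $\got{so}(\got v)$ and stripping $j$ by injectivity. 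One cosmetic remark: in that last step it is the difference $[A_{\got v},[j(Z),j(Z')]]-[j(Z),[A_{\got v},j(Z')]]$ that Jacobi collapses to $[[A_{\got v},j(Z)],j(Z')]$, not the first term alone, but since you rewrite both terms this is only a matter of phrasing. Your proof also makes transparent why injectivity is needed only for the converse, which the bare statement of the lemma leaves implicit.
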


\begin{observation}\label{rem:2pasosnrj}
  Under the hypothesis of Lemma \ref{lemma:2-stepNatRedu} it follows that one can define a Lie bracket $[\cdot,\cdot]_{\got z}$ on $\got z$ by putting 
  \begin{equation*}
      [Z,Z']_{\got z} = \tau_Z(Z')
  \end{equation*}
  (where $\tau_Z$ is defined by \eqref{eq:reljtau}) and $[\cdot,\cdot]_{\got z}$ is such that $j:\got z \rightarrow \got{so}(\got v)$ is a representation of the Lie algebra $(\got z,[\cdot,\cdot]_{\got z})$. In addition, $j:\got z\to \got{so}(\got v)$ has no trivial subrepresentations, i.e. $\cap_{Z\in \got z} \ker j(Z)=\{0\}$ (cf. Remark \ref{rem:trivialsubrep}). Moreover, since $\tau_Z\in \got{so}(\got z)$ for each $Z\in \got z$, one gets that if $\langle \cdot,\cdot \rangle_{\got z}$ denotes the restriction to $\got z$ of the metric on $\got n$, then $\langle\cdot,\cdot\rangle_{\got z}$ is $\ad$-invariant with respect to the Lie bracket $[\cdot,\cdot]_{\got z}$. 
\end{observation}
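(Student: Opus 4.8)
The plan is to take as given the two conclusions of Lemma~\ref{lemma:2-stepNatRedu} furnished by natural reductivity, namely that $j(\got z)$ is a subalgebra of $\got{so}(\got v)$ and that for each $Z\in\got z$ there exists $\tau_Z\in\got{so}(\got z)$ with $[j(Z),j(Z')]=j(\tau_Z(Z'))$, and to verify the four assertions in turn after setting $[Z,Z']_{\got z}:=\tau_Z(Z')$. Three of them are essentially immediate. The representation property is a restatement of \eqref{eq:reljtau}, since $j([Z,Z']_{\got z})=j(\tau_Z(Z'))=[j(Z),j(Z')]$, so that $j$ intertwines $[\cdot,\cdot]_{\got z}$ with the bracket of $\got{so}(\got v)$. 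The absence of trivial subrepresentations, $\cap_{Z\in\got z}\ker j(Z)=\{0\}$, is precisely Remark~\ref{rem:trivialsubrep} and uses nothing beyond the definition of $j$. Finally, the $\ad$-invariance of $\langle\cdot,\cdot\rangle_{\got z}$ is the skew-symmetry of $\tau_Z$: from $\tau_Z\in\got{so}(\got z)$ one gets $\langle[Z,Z']_{\got z},Z''\rangle+\langle Z',[Z,Z'']_{\got z}\rangle=\langle\tau_Z Z',Z''\rangle+\langle Z',\tau_Z Z''\rangle=0$.

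The substance of the remark is thus that $[\cdot,\cdot]_{\got z}$ is a genuine Lie bracket, that is, antisymmetric and subject to the Jacobi identity. My approach is to transport both identities through $j$. Applying $j$ to $[Z,Z']_{\got z}+[Z',Z]_{\got z}$ gives $[j(Z),j(Z')]+[j(Z'),j(Z)]=0$, while applying it to the Jacobiator $\sum_{\mathrm{cyc}}[[Z,Z']_{\got z},Z'']_{\got z}$ gives $\sum_{\mathrm{cyc}}[[j(Z),j(Z')],j(Z'')]=0$ by the Jacobi identity in $\got{so}(\got v)$. Hence the antisymmetry defect and the Jacobiator both lie in $\ker j$. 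When $j$ is injective they vanish and $(\got z,[\cdot,\cdot]_{\got z})$ is at once a Lie algebra, which settles the statement in that case.

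The main obstacle is the general case, where $j$ may fail to be injective, so that $\tau_Z$ is only determined modulo $\ker j$ and neither defect need vanish on the nose. I would first record that $\ker j$ is stable under every $\tau_Z$: for $Z_0\in\ker j$ the relation $j(\tau_Z(Z_0))=[j(Z),j(Z_0)]=0$ gives $\tau_Z(\ker j)\subseteq\ker j$, so $\ker j$ is an ideal and $[\cdot,\cdot]_{\got z}$ descends to $\got z/\ker j$, where $j$ becomes injective and the previous paragraph applies. Promoting this to $\got z$ itself is the delicate step. For antisymmetry I would sidestep the ambiguity in $\tau_Z$ by anchoring the bracket to intrinsic data: the naturally reductive bracket $[\cdot,\cdot]_{\got m}$ on the complement $\got m\subset\got n\rtimes\got h^{\aut}$ is automatically antisymmetric, being the projection onto $\got m$ of a Lie bracket, and a short computation --- using that skew derivations preserve the centre, so that the $\got m$-bracket of two central elements is again central --- identifies its restriction to $\got z\times\got z$ with $\tau_Z(\cdot)$ and forces $\tau_Z(Z')=-\tau_{Z'}(Z)$. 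The Jacobi identity then remains, the $j$-transport leaving only a $\ker j$-valued Jacobiator to be killed; I expect this to come from the explicit form of the reductive data rather than from formal manipulation, consistently with the picture of Section~\ref{section:via representations}, in which $\got z$ literally carries the Lie algebra structure of a compact group. In short, the non-uniqueness of $\tau_Z$ is the only real difficulty, and it is resolved by tying $[\cdot,\cdot]_{\got z}$ to the ambient $\got m$-bracket instead of to an arbitrary choice.
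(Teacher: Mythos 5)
Your handling of the three soft assertions is correct and is precisely the paper's (implicit) reasoning: the homomorphism property is the defining relation \eqref{eq:reljtau} read backwards, the absence of trivial subrepresentations is Remark \ref{rem:trivialsubrep} verbatim, and $\ad$-invariance of $\langle\cdot,\cdot\rangle_{\got z}$ is the skewness of $\tau_Z$. The same goes for your transport of antisymmetry and Jacobi through $j$: both defects land in $\ker j$, hence vanish when $j$ is injective. The paper offers no written proof (it is a remark), but this transport argument is clearly the intended one, and every later use of the remark --- the construction of the data set $((\got z,[\cdot,\cdot]_{\got z}),\got v,j)$ in Section \ref{section:via representations} --- is made under the explicit hypothesis that $j$ is injective, where your argument is complete. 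You are also right, and more careful than the paper, in observing that without injectivity $\tau_Z$ is only determined modulo skew maps into $\ker j$, so the bracket axioms do not follow formally.

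The gap is in your attempted repair of that non-injective case, which the literal statement of the remark does cover. First, anchoring to the reductive complement is circular as you set it up: writing $\got m=\{U-\alpha(U):U\in\got n\}$ for a linear map $\alpha:\got n\to\got h^{\aut}$ and computing with \eqref{eq:corcheteiso}, the $\got m$-bracket of the lifts of $Z,Z'\in\got z$ corresponds under $\got m\simeq\got n$ to $\alpha(Z')(Z)-\alpha(Z)(Z')$, i.e.\ to the \emph{antisymmetrization} $\tau_Z(Z')-\tau_{Z'}(Z)$ (up to the identification $\tau_Z=-\alpha(Z)|_{\got z}$); equating this with $\tau_Z(Z')$ already presupposes the antisymmetry you want, and if you instead redefine $\tau_Z(Z')$ to be this $\got m$-bracket you must re-verify \eqref{eq:reljtau} and skewness for the new $\tau$, which requires the natural reductivity condition \eqref{eq:DefNatRed}, not just the fact that skew derivations preserve the center. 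Second, and decisively, the Jacobi identity is never established there: the $\got m$-component of a Lie bracket does not satisfy Jacobi in general (its Jacobiator equals $-\sum_{\mathrm{cyc}}[[X,Y]_{\got h},Z]$, the curvature term of the canonical connection), and your closing sentence explicitly defers this point to ``the explicit form of the reductive data''. So, as a proof of the remark in the generality in which it is stated, the non-injective case remains open; as a proof of what the paper actually states elsewhere and uses (injective $j$), your argument is complete and coincides with the paper's.
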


\section{Naturally reductive Lorentzian $2$-step nilpotent Lie groups via representations}\label{section:via representations}

In this section we shall recall the construction of a $2$-step nilpotent Lie algebra $\got n$ from a representation $\pi:\got g\to \End(\got v)$, where $\got g$ is a Lie algebra with a particular inner product and $\got v$ is a real vector space, such that the associated 2-step nilpotent simply connected Lie group $N$ is naturally reductive with respect to the presentation group $\Iso^{\aut}(N)$.  In addition, we will present some interesting properties when the metric resulting metric on $\got n$ is is Lorentzian. 

\begin{definition}[cf.~ \cite{lauret1999,ovando2010}]
    A \emph{data set} is a triplet $(\got g, \got v, \pi)$ where:
 \begin{enumerate}
     \item $\got g$ is a Lie algebra endowed with an ad-invariant metric $\langle \cdot, \cdot \rangle_{\got g}$, i.e. $\ad_Z \in \got{so}(\got g,\langle \cdot, \cdot \rangle_{\got g})$ for each $Z \in \got g$;
     \item $\got v$ is a real vector space;
     \item $\pi: \got g \rightarrow \text{End}(\got v)$ is a real faithful representation  without trivial subrepresentations,  i.e. $\cap_{Z \in \got g} \ker \pi(Z) = 0$;
     \item $\got v$ is endowed with a $\pi(\got g)$-invariant inner product $\langle \cdot, \cdot \rangle_{\got v}$, i.e., $\pi:\got g\to \got{so}(\got v)$.
 \end{enumerate}
\end{definition}

Given a data set $(\got g,\got v,\got \pi)$ define
\begin{equation*}
    \got n = \got g \oplus \got v
\end{equation*}
and consider a  metric on $\got n$ setting
\begin{equation}\label{eq:metricNR}
    \langle \cdot, \cdot \rangle|_{\got g \times \got g} = \langle \cdot, \cdot \rangle_{\got g}, \ \ \ \langle \cdot, \cdot \rangle|_{\got v \times \got v} = \langle \cdot, \cdot \rangle_{\got v}, \ \ \ \langle \got g, \got v \rangle =0.
\end{equation}
One can define a  Lie bracket on $\got n$ by 
\begin{equation}\label{eq:bracketNR}
    \left\{\begin{array}{l}
         [\got g,\got n] = 0, \ [\got v,\got v]\subset \got g, \\
         \langle [X,Y], Z \rangle = \langle \pi(Z)X, Y \rangle \quad \text{for } Z \in \got g, \, X,Y \in \got v, \\
    \end{array}\right.
\end{equation}
see \cite{lauret1999,ovando2010}. Then $\got n$ is a 2-step nilpotent metric Lie algebra which we shall denote by $\got n(\got g,\got v,\pi)$. It is immediate that the center $\got z$ of $\got n$ contains $\got g$. From equations \eqref{eq:metricNR} and \eqref{eq:bracketNR} one gets that if $X\in \got v$ belongs to $\got z$, then $X \in\cap_{Z\in \got g}\ker \pi(Z)$ and hence $X = 0$. So $\got z=\got g$ and hence the center of $\got n(\got g, \got v,\pi)$  is non-degenerate. Denote by $N(\got g,\got v,\pi)$ the simply connected 2-step nilpotent Lie group associated to $\got n(\got g,\got v,\pi)$. 
In this case, the map $j:\got z\to \got{so}(\got v)$ defined in \eqref{eq:def_map_j}  coincides with the inyective representation $\pi$ and the map $\tau_Z$, for each $Z\in \got z$, defined in Lemma \ref{lemma:2-stepNatRedu} is given by $\tau_Z=j^{-1}\circ {\ad_{j(Z)}}\circ j$. 

Hence the converse of Lemma \ref{lemma:2-stepNatRedu} holds and so $N:=N(\got g,\got v,\pi)$ with the induced left-invariant metric is a naturally reductive pseudo-Riemannian space, with respect to the presentation group $\Iso^{\operatorname{aut}}(N)$. 

Moreover, from Remark \ref{rem:2pasosnrj} we conclude that if $\got n$ is a 2-step nilpotent metric Lie algebra with non-degenerate center $\got z$ and injective $j$ then $((\got z,[\cdot,\cdot]_{\got z}),\got v=\got z^{\bot},j)$ is a data set and the associated Lie group $N$ is $N(\got z,\got v,j)$. Recall that one can guaranty an inyective $j$ if $N$ has no flat de Rham-Wu factor and $[\got n,\got n]$ is non-degenerate (Theorem \ref{lemma:injective}). 

We shall now obtain some properties of data sets $(\got g,\got v,\pi)$ such that the associated Lie group $N(\got g,\got v,\pi)$ is Lorentzian. 

\begin{definition}
  We say that a data set $(\got g,\got v,\pi)$ is a \emph{Lorentzian data set} if the metric $\langle\cdot,\cdot\rangle$ defined on $\got n=\got n(\got g,\got v,\pi)=\got g\oplus\got v$  by \eqref{eq:metricNR} has signature one. In this case, the group $N=N(\got g,\got v,\pi)$ with the left-invariant metric induced by $\langle\cdot,\cdot\rangle$ is a Lorentzian manifold. We say that $\got g$ (resp.\ $\got v$) is Riemannian if $\langle \cdot,\cdot\rangle_{\got g}$ (resp.\ $\langle\cdot,\cdot\rangle_{\got v}$) is positive definite and Lorentzian if it has signature one. 
\end{definition}

Given a Lorentizan data set $(\got g,\got n,\pi)$, since the decomposition $\got n=\got g\oplus \got v$ is orthogonal, then one of the spaces $\got g$ and $\got v$ is Riemannian and the other is Lorentzian.

\begin{proposition}\label{prop:gcompact}
   Let $(\got g, \got v, \pi)$ be a Lorentzian data set. Then $\got g$ is a compact Lie algebra. Hence $\got g=\overline{\got g}\oplus \got c$, where $\got c$ is the center of $\got g$ and $\overline{\got g}=[\got g,\got g]$ is semisimple. 
\end{proposition}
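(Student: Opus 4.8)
The plan is to exhibit, in all cases, a positive definite $\ad$-invariant inner product on $\got g$, since this is exactly what characterizes a Lie algebra of compact type. Because $\got n=\got g\oplus\got v$ is orthogonal and the total metric has signature one, exactly one of the factors $\got g,\got v$ is Riemannian and the other is Lorentzian (as remarked just above the statement), so I would argue by cases.

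If $\got g$ is Riemannian, there is nothing to construct: the inner product $\langle\cdot,\cdot\rangle_{\got g}$ is, by the very definition of a data set, positive definite and $\ad$-invariant, so $\got g$ is compact.

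If instead $\got g$ is Lorentzian, then $\got v$ is Riemannian, and the given indefinite metric on $\got g$ is useless for this purpose; the idea is to replace it by the trace form of the representation. Set $B(X,Y)=\trace(\pi(X)\pi(Y))$ for $X,Y\in\got g$. An $\ad$-invariance computation---using $\pi([Z,X])=[\pi(Z),\pi(X)]$ and the cyclicity of the trace---shows $B(\ad_Z X,Y)+B(X,\ad_Z Y)=0$, so $B$ is $\ad$-invariant. For definiteness I would use that $\pi(\got g)\subset\got{so}(\got v)$ with $\langle\cdot,\cdot\rangle_{\got v}$ \emph{positive definite}: each $\pi(X)$ is skew-symmetric with respect to a Euclidean product, hence $B(X,X)=\trace(\pi(X)^2)=-\trace\big(\pi(X)^*\pi(X)\big)\le 0$, with equality if and only if $\pi(X)=0$, i.e.\ $X=0$ by faithfulness of $\pi$. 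Thus $-B$ is a positive definite $\ad$-invariant inner product and $\got g$ is again compact.

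Once $\got g$ is known to be compact, the splitting follows from the standard structure theory. Fixing any positive definite $\ad$-invariant inner product on $\got g$, the center $\got c$ is an ideal and, by $\ad$-invariance, so is its orthogonal complement; this yields $\got g=\got c\oplus\got c^\perp$. I would then check that $\got c^\perp$ has trivial center---an element central in $\got c^\perp$ is automatically central in $\got g$ because $[\got c,\got g]=0$, hence lies in $\got c\cap\got c^\perp=\{0\}$---so $\got c^\perp$ is semisimple of compact type; and since $[\got c,\got g]=0$ one gets $[\got g,\got g]=[\got c^\perp,\got c^\perp]=\got c^\perp=\overline{\got g}$. The one genuine obstacle is the Lorentzian case, where the only $\ad$-invariant datum on $\got g$ is indefinite; the crux is recognizing that faithfulness of $\pi$ together with the Euclidean structure of $\got v$ produces, via the trace form, the positive definite $\ad$-invariant inner product that the indefinite metric cannot supply.
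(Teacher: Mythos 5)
Your proof is correct and follows essentially the same route as the paper: the identical case split on which of $\got g$, $\got v$ is Riemannian, with the Riemannian case immediate from the definition of a data set, and the Lorentzian case resting on faithfulness of $\pi$ together with $\pi(\got g)\subset\got{so}(\got v)$ for Euclidean $\got v$. The only difference is one of detail: where the paper invokes the standard fact that a subalgebra of the compact Lie algebra $\got{so}(\got v)$ is itself compact, you prove that fact by hand via the negative trace form $-\trace(\pi(X)\pi(Y))$, and you also spell out the splitting $\got g=\got c\oplus\overline{\got g}$ that the paper quotes as standard structure theory.
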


\begin{proof}
  If $\got g$ is Riemannian and $\got v$ is Lorentzian, $\got g$ is compact since the metric on $\got g$ is Riemannian and $\ad$-invariant. If $\got g$ is Lorentzian, since the representation $\pi$ is  faithful, then $\got g$ is isomorphic to $\pi(\got g)\subset \got{so}(\got v)$.  So $\got g$ is isomorphic to a subalgebra of a compact Lie algebra and hence it is compact. 
\end{proof}

Let $(\got g, \got v, \pi)$ be a Lorentzian data set. If $\got g$ is Lorentzian and  $\got v$ is Riemannian, the proof of the following result is analogous to the Riemannian case (cf.~ \cite[Lemma 3.11]{lauret1999}).

\begin{theorem}\label{lema:tec}
  Let $(\got g, \got v, \pi)$ be a data set with $\got v$ Riemannian and let $\got g = \overline{\got g} \oplus \got c$, with $\overline{\got g} = [\got g, \got g]$ and $\got c$ is the center of $\got g$. Then $\got v$ admits an orthogonal decomposition
  \begin{equation}\label{eq:descgotv}
    \got v = \got v_1 \oplus \cdots \oplus \got v_k
  \end{equation}
  into $\pi(\got g)$-irreducible subspaces, such that for each $i=1, \ldots,k$ there exist a skew-symmetric map $J_i:\got v_i \rightarrow \got v_i$ satisfying $J_i^2 = - I$ such that for every $Z\in \got c$, 
  \begin{equation*}
    \pi(Z)|_{\got v_i} = \lambda_i(Z) J_i \, \text{ for some } \lambda_i(Z)\in \R.
  \end{equation*}
\end{theorem}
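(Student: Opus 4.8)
The plan is to use that $\pi$ maps $\got g$ into the skew-symmetric endomorphisms of the Euclidean space $(\got v,\langle\cdot,\cdot\rangle_{\got v})$, so that $\pi$ is an orthogonal and hence completely reducible representation. First I would decompose $\got v$ orthogonally into $\pi(\got g)$-irreducible subspaces $\got v=\got v_1\oplus\cdots\oplus\got v_k$; this is possible because the orthogonal complement of any $\pi(\got g)$-invariant subspace is again invariant (as $\pi(\got g)\subset\got{so}(\got v)$), and since the data set has no trivial subrepresentation each $\got v_i$ is nontrivial. All the remaining content of the statement concerns the action of the center $\got c$ on a fixed irreducible block $\got v_i$.

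Since $\got c$ is the center of $\got g$, each $\pi(Z)$ with $Z\in\got c$ commutes with all of $\pi(\got g)$; hence it preserves $\got v_i$ and its restriction $A:=\pi(Z)|_{\got v_i}$ is a skew-symmetric operator lying in the commutant $\End_{\pi(\got g)}(\got v_i)$, which by the real Schur lemma is one of $\R$, $\C$ or $\mathbb{H}$. Concretely, I would argue by complexifying $\got v_i$: as $A$ is skew it is semisimple with purely imaginary eigenvalues $\pm i\mu$, and for each modulus the associated real subspace is $\pi(\got g)$-invariant (because $\pi(\got g)$ commutes with $A$ and is real). Irreducibility then forces a single modulus $\mu_i$, so either $A=0$ or $A^2=-\mu_i^2\,I$ with $\mu_i>0$; in the latter case $J_i:=\mu_i^{-1}A$ is a skew-symmetric complex structure with $J_i^2=-I$ and $A=\mu_i J_i$.

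The crucial point is to exhibit a single $J_i$ that serves every $Z\in\got c$ at once. Fixing one $Z_0\in\got c$ with $\pi(Z_0)|_{\got v_i}\neq 0$ (if none exists, $\pi(\got c)|_{\got v_i}=0$ and one sets $\lambda_i\equiv 0$ with any admissible $J_i$), the previous step yields $J_i$. For any other $Z\in\got c$ the operator $\pi(Z)|_{\got v_i}$ is skew and, because $\got c$ is abelian, commutes both with $\pi(\got g)$ and with $J_i$; by the same eigenvalue argument it equals $\nu\,J_i'$ for a complex structure $J_i'$ commuting with $J_i$. Then $K:=J_iJ_i'$ is symmetric, satisfies $K^2=I$ and commutes with $\pi(\got g)$, so irreducibility gives $K=\pm I$, that is $J_i'=\pm J_i$. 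Hence $\pi(Z)|_{\got v_i}=\lambda_i(Z)J_i$ with $\lambda_i(Z)\in\R$, and linearity of $\lambda_i$ is immediate from that of $\pi$. Equivalently, one may note that $\{\pi(Z)|_{\got v_i}:Z\in\got c\}$ is a commutative family of skew elements of the division algebra $\End_{\pi(\got g)}(\got v_i)$, hence contained in a single copy of $\C$, whose skew part is the line $\R J_i$.

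I expect this uniformity to be the main obstacle: the eigenvalue analysis easily produces, for each individual $Z$, a complex structure proportional to $\pi(Z)|_{\got v_i}$, but the statement requires one $J_i$ independent of $Z$. The commutativity of $\got c$ combined with irreducibility --- encoded in the symmetric involution $K=J_iJ_i'$ --- is exactly what collapses all these complex structures onto the same line $\R J_i$. The degenerate block where $\pi(\got c)|_{\got v_i}=0$ needs no structural input beyond the decomposition already obtained.
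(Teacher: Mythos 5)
Your argument is correct in substance, but note that the paper itself does not prove this theorem: it only remarks that ``the proof \dots is analogous to the Riemannian case'' and cites Lauret's Lemma 3.11, so the honest comparison is with that standard argument rather than with anything written in the text. What you propose is essentially that argument, made self-contained: complete reducibility of orthogonal representations on a definite space gives the decomposition \eqref{eq:descgotv}; for central $Z$ the restriction $\pi(Z)|_{\got v_i}$ is a skew-symmetric intertwiner, and your complexification/eigenvalue argument correctly shows it is either $0$ or a nonzero multiple of a skew-symmetric complex structure; and your observation that $K=J_iJ_i'$ is a symmetric involution commuting with $\pi(\got g)$, hence $\pm \Id$ by irreducibility, is an elementary substitute for the usual invocation of Schur's lemma (the commutant is $\R$, $\C$ or $\mathbb H$, and an abelian family of skew elements spans at most a line). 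Your route buys a proof that never quotes the classification of real division algebras, and it also makes transparent that only the positive definiteness of $\langle\cdot,\cdot\rangle_{\got v}$ is used (the metric on $\got g$ plays no role), which matches the generality of the statement. The uniformity of $J_i$ over all of $\got c$, which you single out as the crux, is indeed the only nontrivial point, and you resolve it correctly.

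One caveat, which is really an imprecision of the statement itself rather than a defect of your argument: on a block $\got v_i$ on which $\pi(\got c)$ acts trivially, your phrase ``any admissible $J_i$'' presupposes that $\got v_i$ carries some skew-symmetric complex structure, which forces $\dim\got v_i$ to be even and can fail. For instance, take $\got g=\got{su}(2)\oplus\R$ acting on $\got v=\R^3\oplus\R^2$ by the adjoint representation on the first summand and by rotations of the plane on the second: this is a data set with $\got v$ Riemannian, yet $\got v_1=\R^3$ is irreducible, odd-dimensional, and annihilated by $\got c$. The correct reading of the theorem --- and the way it is actually used in the paper, e.g.\ in the proof of Theorem \ref{teo:16}, where only blocks with $\pi(Z)|_{\got v_i}$ nonsingular are considered --- is that $J_i$ exists for those $i$ with $\pi(\got c)|_{\got v_i}\neq 0$, while on the remaining blocks one has $\lambda_i\equiv 0$ and no claim about $J_i$. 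Your proof establishes exactly this; you should just delete the pretense of choosing an ``admissible $J_i$'' on the degenerate blocks.
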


Whenever $\got g$ is Riemannian and $\got v$ is Lorentzian, it is not possible to  decompose $\got v$ into $\pi(\got g)$-irreducible orthogonal subspaces, but we shall prove that one can decompose $\got v$ into an orthogonal sum of a first reducible factor, which is a $2$-dimensional Lorentzian subspace generated by  two invariant lightlike vectors, and the sum of irreducible Riemannian subspaces (cf.\ Theorem \ref{teo:imp1} below). In order to do so we first need to prove some technical results on the Lie algebra $\got{so}(1,n)$ of the Lorentzian isometry group.   

Recall that if $\got v$ is Lorentzian, say of dimension $n+1$, then $\got v$ can be identified with the Lorentzian space $\mathbb R^{1,n}$, i.e. the vector space $\mathbb R^{n+1}$ with the canonical Lorentzian metric given by 
\begin{equation*}
  \langle x,y\rangle_1=-x_1y_1+\sum_{j=2}^{n+1}x_jy_j= x^tMy, \text{ with } M=\left(\begin{array}{c|c}
  -1& 0\\
  \hline
  0 & \text{Id}_n\end{array}\right),
\end{equation*}
and $\got{so}(\got v,\langle\cdot,\cdot\rangle_{\got v})\simeq \got{so}(1,n)$, where $\got{so}(1,n)$ is the Lie algebra of the isometry group $\OO(1,n)$ of $(\mathbb R^{1,n},\langle\cdot,\cdot\rangle_1$), i.e., 
\begin{align*}
  \got{so}(1,n) & = \{ A\in \got{gl}(n+1,\mathbb R) : \langle Av,w \rangle_1+\langle v, Aw \rangle_1=0, \text{ for all } v, w \in \R^{n+1}\} \\ 
  & =\left\{\left(\begin{array}{c|c}
  0& x^t\\
  \hline
  x & B \end{array}\right) : x\in \mathbb R^n, \, B \in \got{so}(n)\right\}.
\end{align*} 

\begin{lemma}\label{lema:SO_fixedvector}
    Let $K$ be a compact subgroup of the Lie group $\SO_+(1,n)$ (the connected component of the identity in $\OO(1,n)$). Then there is a timelike vector $v$ of $\mathbb R^{1,n}$ which is fixed by all the elements of $K$.
\end{lemma}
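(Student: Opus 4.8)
The plan is to produce the fixed vector by averaging over $K$ with respect to its Haar measure, and then to verify that the average is still timelike. First I would record that, since $K$ is compact, it carries a normalized (left-invariant) Haar measure $\mu$. I would fix the future-pointing timelike vector $v_0 = e_1 = (1,0,\ldots,0)$, for which $\langle v_0, v_0\rangle_1 = -1$ and $(v_0)_1 > 0$, and set
\[
  \bar v = \int_K k\, v_0 \, \dif\mu(k),
\]
the componentwise vector-valued integral. Invariance is then immediate: for $g \in K$, the left-invariance of $\mu$ and the linearity of the action give $g\bar v = \int_K (gk)v_0\,\dif\mu(k) = \bar v$, so $\bar v$ is fixed by all of $K$.

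The substance of the argument is showing that $\bar v$ is timelike. I would introduce the open future timelike cone $\Cf = \{w \in \R^{1,n} : \langle w, w\rangle_1 < 0,\ w_1 > 0\}$. Since $K \subset \SO_+(1,n)$ preserves both $\langle\cdot,\cdot\rangle_1$ and the time-orientation, every orbit point $k v_0$ lies in $\Cf$; in particular $(\bar v)_1 = \int_K (k v_0)_1\,\dif\mu(k) > 0$. To control the norm I would use the elementary inequality $\langle u, w\rangle_1 < 0$, valid for any two future-pointing timelike $u, w$ (it follows from $u_1 > |u'|$, $w_1 > |w'|$ and the Cauchy--Schwarz inequality applied to the spatial parts $u', w'$). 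Applying it to $u = k v_0$, $w = k' v_0$ and integrating twice,
\[
  \langle \bar v, \bar v\rangle_1 = \int_K\int_K \langle k v_0, k' v_0\rangle_1 \,\dif\mu(k)\,\dif\mu(k') < 0,
\]
the strict inequality coming from the fact that the integrand is continuous and everywhere negative on the compact set $K \times K$, hence bounded above by a negative constant. Thus $\bar v \in \Cf$ is the desired $K$-fixed timelike vector.

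The main obstacle I anticipate is precisely this strictness: an average of timelike vectors can fail to be timelike (it may be null, or even zero) if the vectors point into different halves of the cone, so the argument genuinely requires $K \subset \SO_+(1,n)$ to force all the $k v_0$ into the single future cone $\Cf$. As a sanity check and alternative route, one can argue geometrically: $\SO_+(1,n)$ acts isometrically on the upper hyperboloid $\{w : \langle w, w\rangle_1 = -1,\ w_1 > 0\}$, a model of hyperbolic $n$-space and hence a Hadamard manifold, and any compact group of isometries of a Hadamard manifold fixes the circumcenter of an orbit; that fixed point is again a unit timelike vector. I would present the averaging argument as the main proof, since it is entirely elementary and self-contained.
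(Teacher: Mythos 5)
Your proof is correct, and it takes a genuinely different route from the paper. The paper's argument is geometric: it identifies $\SO_+(1,n)$ with the identity component of $\Iso(\mathbb H^n)$, where $\mathbb H^n$ is the upper hyperboloid $\{x : \langle x,x\rangle_1=-1,\ x_1>0\}$, and applies Cartan's Fixed Point Theorem to the compact group $K$ acting by isometries on this complete, simply connected, negatively curved manifold; the fixed point is then a unit timelike vector. (Your ``alternative route'' at the end is essentially this proof, phrased via circumcenters of orbits.) Your main argument instead averages $v_0=e_1$ over the Haar measure of $K$ and exploits the cone geometry of $\R^{1,n}$: since $\SO_+(1,n)$ is orthochronous, the orbit $K\cdot v_0$ stays in the open future cone, and the strict inequality $\langle u,w\rangle_1<0$ for future-pointing timelike $u,w$, combined with compactness of $K\times K$, forces $\langle \bar v,\bar v\rangle_1<0$ — which is exactly the point where naive averaging of timelike vectors could fail, and which you correctly isolate and justify. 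What each approach buys: yours is elementary and self-contained, requiring only the existence of normalized Haar measure and a Cauchy--Schwarz computation, with no input from comparison geometry; the paper's is shorter modulo the cited theorem, stays within the hyperbolic-geometry picture that motivates the lemma, and directly produces a fixed point lying on $\mathbb H^n$ itself. Both yield a future-pointing fixed timelike vector, so the conclusions are interchangeable for the application in Lemma \ref{lemma:2}.
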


\begin{proof}
  Consider the $n$-dimensional hyperbolic space $\mathbb H^n$, as the $n$-dimensional Riemannian submanifold of $\mathbb R^{1,n}$ given by 
  \begin{equation*}
      \mathbb H^n=\{x\in \mathbb R^{1,n}\,:\,\langle x,x\rangle=-1, \ x_1>0\}.
  \end{equation*}
  Then $\SO_+(1,n)$ is the connected component of the identity of $\Iso(\mathbb H^n)$. Hence $K\subset \SO_{+}(n,1)$ is a compact group which acts on $\mathbb H^{n}$ by isometries. Since $\mathbb H^{n}$ is complete, simply connected and has negative sectional curvature, by Cartan's Fixed Point Theorem (cf.~ \cite[Theorem 1.4.6]{eberlein1996}) $K$ has a fixed point in $\mathbb{H}^n$, which is a timelike vector of the Lorentzian space $\mathbb{R}^{1,n}$.
\end{proof}

\begin{lemma}\label{lemma:2}
  Let $\got s$ be a compact semisimple Lie subalgebra of $\got{so}(1,n)$. Then 
  \begin{equation*}
      \got v_0 = \bigcap\limits_{A \in \got s} \ker A
  \end{equation*}
  contains at least one timelike vector. In particular, $\got v_0$ is non-degenerate and if it has dimension greater than or equal to $2$, it is a Lorentzian space.
\end{lemma}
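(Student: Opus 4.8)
The plan is to integrate $\got s$ to a compact subgroup of $\SO_+(1,n)$, apply Lemma~\ref{lema:SO_fixedvector} to produce a fixed timelike vector, and then translate this statement back to the Lie algebra to identify $\got v_0$ with the fixed-point set of the group.

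First I would let $S$ be the connected Lie subgroup of $\SO_+(1,n)$ whose Lie algebra is $\got s$. Since $\got s$ is compact and semisimple, its Killing form is negative definite, so by Weyl's theorem the simply connected Lie group $\tilde S$ with Lie algebra $\got s$ is compact. As $S$ is the image of $\tilde S$ under the homomorphism integrating the inclusion $\got s \hookrightarrow \got{so}(1,n) = \Lie(\SO_+(1,n))$, it is a continuous image of a compact group, hence a compact (in particular closed) subgroup of $\SO_+(1,n)$. Applying Lemma~\ref{lema:SO_fixedvector} to $K = S$ then yields a timelike vector $v \in \R^{1,n}$ fixed by every element of $S$.

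Next I would note that, because $S$ is connected and generated by $\exp(\got s)$, its fixed-point set coincides with $\got v_0$. Indeed, $gv = v$ for all $g \in S$ forces $Av = 0$ for all $A \in \got s$ upon differentiating, and conversely $Aw = 0$ for all $A \in \got s$ gives $\exp(A)w = w$ and hence $gw = w$ for every $g \in S$. Thus the fixed vector $v$ lies in $\got v_0$ and is timelike, which proves the first assertion. For the non-degeneracy claim I would use $v$ to split the ambient space: since $\langle v, v \rangle < 0$, the orthogonal decomposition $\R^{1,n} = \R v \oplus v^{\perp}$ has $\R v$ negative definite and $v^{\perp}$ positive definite of dimension $n$. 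As $v \in \got v_0$, any $w \in \got v_0$ decomposes as $w = a v + w'$ with $w' = w - a v \in \got v_0 \cap v^{\perp}$, so $\got v_0 = \R v \oplus (\got v_0 \cap v^{\perp})$ orthogonally. The metric is negative definite on $\R v$ and positive definite on $\got v_0 \cap v^{\perp} \subset v^{\perp}$, so its restriction to $\got v_0$ is non-degenerate of signature $(1, \dim \got v_0 - 1)$; in particular $\got v_0$ is Lorentzian whenever $\dim \got v_0 \geq 2$.

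I expect the main obstacle to be the first step: making rigorous that $\got s$ integrates to a genuinely compact subgroup of $\SO_+(1,n)$, which is exactly what allows Lemma~\ref{lema:SO_fixedvector} to apply. The semisimplicity and compactness of $\got s$ are precisely the hypotheses that guarantee, via Weyl's theorem, that the simply connected integrating group is compact; if $\got s$ merely had a nontrivial central factor this could fail, which is why the statement is restricted to semisimple $\got s$.
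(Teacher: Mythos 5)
Your proposal is correct and follows essentially the same route as the paper: integrate $\got s$ to a compact connected subgroup of $\SO_+(1,n)$, invoke Lemma~\ref{lema:SO_fixedvector} to obtain a fixed timelike vector, and differentiate to conclude that this vector lies in $\got v_0$. You merely spell out two steps the paper leaves implicit (Weyl's theorem for compactness of the integrating group, and the orthogonal splitting $\got v_0 = \R v \oplus (\got v_0 \cap v^{\perp})$ for non-degeneracy), both of which are fine.
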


\begin{proof}
  Let $G$ be a connected subgroup of $\SO_+(1,n)$ with Lie algebra $\got s$. Since $\got s$ is compact and semisimple, $G$ is compact. By Lemma \ref{lema:SO_fixedvector}, there is a timelike vector $v\in \R^{1,n}$ such that if $A \in \got s$ then $e^{tA}(v) = v$ for every $t$. We then have that $A \cdot v = 0$ for every $A \in \got s$. That is, $v \in \got v_0$ and therefore $\got v_0$ is either a one-dimensional  subspace generated by $v$ or it is a Lorentzian subspace of $\got v$.
\end{proof}

The following result is immediate from the previous lemma.

\begin{corollary}\label{corolario 1}
  Let $\got s$ be a compact semisimple Lie algebra. Then there are no faithful representations $\rho: \got s \rightarrow \got{so}(1,n)$ without trivial subrepresntations.
\end{corollary}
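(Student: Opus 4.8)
The plan is to argue by contradiction and reduce everything directly to Lemma \ref{lemma:2}. Suppose that a faithful representation $\rho: \got s \to \got{so}(1,n)$ without trivial subrepresentations did exist. The first step is to observe that, because $\rho$ is faithful, its image $\rho(\got s)$ is isomorphic to $\got s$ and is therefore itself a compact semisimple Lie subalgebra of $\got{so}(1,n)$. This places us exactly in the situation covered by Lemma \ref{lemma:2}, applied to $\got s' := \rho(\got s)$.

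The second step is to invoke Lemma \ref{lemma:2} for $\got s'$, which guarantees that the common kernel $\got v_0 = \bigcap_{A \in \got s'} \ker A$ contains a timelike vector; in particular $\got v_0 \neq \{0\}$. Translating back through $\rho$, this common kernel is precisely the set of vectors of $\R^{1,n}$ annihilated by every $\rho(Z)$ with $Z \in \got s$, i.e.
\begin{equation*}
  \got v_0 = \bigcap_{Z \in \got s} \ker \rho(Z),
\end{equation*}
which is the maximal subspace on which $\rho$ acts trivially.

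The final step is to note that $\got v_0 \neq \{0\}$ directly contradicts the hypothesis that $\rho$ has no trivial subrepresentations, which by definition means $\bigcap_{Z \in \got s} \ker \rho(Z) = \{0\}$. Hence no such $\rho$ can exist, which is the assertion of the corollary.

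I do not anticipate any genuine obstacle, since the whole content is already packaged in Lemma \ref{lemma:2}; the corollary is essentially a restatement. The only points that require (routine) care are the transport of the compact semisimple hypothesis from $\got s$ to $\rho(\got s)$ via faithfulness, and the identification of the common kernel of $\rho(\got s)$ with the trivial part of the representation. Both are immediate, which is why the statement is flagged as following at once from the previous lemma.
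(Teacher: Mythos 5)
Your proof is correct and is precisely the argument the paper intends when it calls the corollary ``immediate from the previous lemma'': pass to the image $\rho(\got s)\subset\got{so}(1,n)$, which is compact semisimple by faithfulness, and apply Lemma~\ref{lemma:2} to get a nonzero common kernel, contradicting the absence of trivial subrepresentations. No gaps; the two routine identifications you flag (transport of compactness and semisimplicity along the isomorphism, and the equality of the common kernel with the trivial part) are indeed immediate.
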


\begin{corollary} \label{corolario 2}
  Let $(\got g,\got v, \pi)$ be a Lorentzian data set. Then $\got g$ is not semisimple (i.e., $\got c\neq\{0\}$). 
\end{corollary}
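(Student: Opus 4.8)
The plan is to argue by contradiction: assume $\got g$ is semisimple and derive a contradiction from the Lorentzian hypothesis. By Proposition \ref{prop:gcompact}, $\got g$ is compact, so this assumption would make $\got g$ a \emph{compact semisimple} Lie algebra. Since $\got n = \got g \oplus \got v$ is an orthogonal direct sum of signature one, exactly one of the two factors is Lorentzian and the other Riemannian, and I would treat these two cases separately.

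In the case where $\got v$ is Lorentzian (and $\got g$ is Riemannian), I would identify $\got{so}(\got v) \simeq \got{so}(1,n)$ and note that the defining properties of a data set make $\pi: \got g \to \got{so}(1,n)$ a faithful representation with no trivial subrepresentations. A compact semisimple $\got g$ then contradicts Corollary \ref{corolario 1} directly, which is precisely the statement assembled from Lemma \ref{lemma:2}.

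The remaining case, where $\got g$ is Lorentzian (and $\got v$ is Riemannian), requires a genuinely different argument and is the main obstacle. Here the point is that $\langle\cdot,\cdot\rangle_{\got g}$ would be an ad-invariant Lorentzian form on a compact semisimple Lie algebra. I would decompose $\got g = \got g_1 \oplus \cdots \oplus \got g_m$ into its simple ideals and recall that every ad-invariant symmetric bilinear form is block-diagonal for this decomposition (the cross terms vanish because $\got g_i = [\got g_i, \got g_i]$ while $[\got g_i, \got g_j] = 0$ for $i \neq j$), restricting on each ideal to a multiple $c_i B_i$ of its Killing form $B_i$. Since $\got g$ is compact, each $B_i$ is negative definite of dimension $\dim \got g_i \geq 3$; hence each block $c_i B_i$ is definite and contributes $\dim \got g_i$ eigenvalues of a single sign. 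Consequently the number of negative eigenvalues of $\langle\cdot,\cdot\rangle_{\got g}$—and likewise the number of positive ones—is a sum of the dimensions of a subfamily of the simple ideals, so each of these counts is either $0$ or at least $3$. In particular neither can equal $1$, so $\langle\cdot,\cdot\rangle_{\got g}$ cannot have signature one, contradicting that $\got g$ is Lorentzian.

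In both cases the assumption that $\got g$ is semisimple is untenable, so $\got g$ must have nontrivial center, i.e.\ $\got c \neq \{0\}$, as claimed. The only step needing care is the classification of ad-invariant forms on a semisimple Lie algebra; everything else reduces to the already established Proposition \ref{prop:gcompact} and Corollary \ref{corolario 1}.
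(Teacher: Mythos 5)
Your proof is correct and takes essentially the same approach as the paper's: both combine Proposition \ref{prop:gcompact}, the decomposition of the ad-invariant metric into multiples of the (negative definite) Killing forms on simple ideals of dimension at least $3$ to rule out a Lorentzian $\got g$, and Corollary \ref{corolario 1} to rule out a Lorentzian $\got v$. The only difference is organizational: you split into the two signature cases up front, whereas the paper first eliminates the possibility that $\got g$ is Lorentzian and then invokes Corollary \ref{corolario 1}; the mathematical content is identical.
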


\begin{proof}
  Suppose $\got g$ is semisimple and decompose $\got g$ as the direct sum $\got g=\got h_1\oplus\cdots\oplus \got h_n$ of simple ideals.  From Proposition \ref{prop:gcompact}, $\got g$ is compact and hence each $\got h_i$ is simple and compact and $\dim \got h_i\geq 3$ for each $i=1,\ldots,n$. On the other hand, it is standard to see that $\got h_i \perp \got h_j$, if $i\neq j$, with respect to the $\ad$-invariant metric $\langle\cdot,\cdot\rangle_{\got g}$, and $\langle\cdot,\cdot\rangle_{\got g}$ decomposes as 
  \begin{equation*}
      \langle\cdot,\cdot\rangle_g=\lambda_1B_1+\cdots+\lambda_nB_n,
  \end{equation*}
  where $B_i$ is the Killing form of $\got h_i$ (see for example \cite{conti2024}). Since $\got h_i$ is compact, each $B_i$ is negative definite and so either $\got g$ is Riemannian or $\langle\cdot,\cdot\rangle_{\got g}$ has signature $\nu\geq 2$, which can not occur. 

  We conclude that $\got g$ must be Riemannian and hence $\got v$ is Lorentzian. But then $\pi:\got g\to \got{so}(\got v)\simeq \got{so}(1,n)$ is a faithful representation without trivial subrepresentations, which contradicts Corollary \ref{corolario 1}. 
\end{proof}

\begin{lemma}\label{lema:veryimpor}
  Let $\got a$ be an abelian subalgebra of $\got{so}(1,n)$ such that $\cap_{x\in \got a} \ker x = \{ 0 \}$. Then 
  \begin{equation*}
      \R^{1,n} = \got v_0\oplus\got v_1 \oplus \cdots \oplus \got v_l
  \end{equation*}
  is the orthogonal sum of $\got a$-invariant subspaces, such that $\got v_i$ is Riemannian and irreducible for $i\geq 1$ and $\got v_0$ is Lorentzian of dimension $2$, which is in turn the sum of two invariant (and irreducible) subspaces of dimension $1$ generated by lightlike vectors.
\end{lemma}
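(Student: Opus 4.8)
The plan is to pass to the Jordan components of $\got a$, read off the eigenvalue structure from the signature $(1,n)$, and use the hypothesis $\cap_{x\in\got a}\ker x=\{0\}$ to rule out the degenerate (parabolic) configurations. First I would reduce to the case in which $\got a$ is generated by semisimple and nilpotent elements. For $A\in\got a$ let $A=A_s+A_n$ be its additive Jordan decomposition; since $\got{so}(1,n)$ is algebraic, $A_s,A_n\in\got{so}(1,n)$, and as they are polynomials in $A$ the larger space $\got a'=\got a_s\oplus\got a_n$ (the spans of all semisimple, resp.\ nilpotent, parts) is again an abelian subalgebra, with $\got a_s$ consisting of semisimple and $\got a_n$ of nilpotent elements. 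A subspace is $\got a$-invariant if and only if it is $\got a'$-invariant, and $\cap_{x\in\got a'}\ker x=\cap_{x\in\got a}\ker x=\{0\}$, so it suffices to decompose $\R^{1,n}$ for $\got a'$.

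Next I would complexify and decompose $\C^{n+1}$ into the simultaneous generalized eigenspaces $V_\lambda$ of $\got a_s$, indexed by the weights $\lambda$. Invariance of the bilinear form gives $\langle V_\lambda,V_\mu\rangle=0$ unless $\lambda+\mu=0$, so each $V_\lambda$ with $\lambda\neq0$ is totally isotropic and is paired non-degenerately with $V_{-\lambda}$, while complex conjugation interchanges $V_\lambda$ and $V_{\bar\lambda}$. Because the signature of $\R^{1,n}$ has a single negative sign, the totally isotropic eigenspaces are tightly constrained: a real weight $\lambda\neq0$ together with $-\lambda$ spans a neutral block and so uses up a negative direction, whereas an eigenvalue with nonzero real and imaginary parts, together with its conjugates and negatives, would span a neutral block of signature $(2,2)$. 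Hence there are no genuinely complex eigenvalues, there is at most one real weight $\pm\lambda$, and its eigenlines are $1$-dimensional; the remaining, purely imaginary, eigenvalues yield positive-definite rotation blocks. Consequently $\got a_s$ is either purely elliptic, or it has exactly one real weight $\pm\lambda$, whose eigenlines are spanned by lightlike vectors $\ell_+,\ell_-$ with $\langle\ell_+,\ell_-\rangle\neq0$, so that $\got v_0=\R\ell_+\oplus\R\ell_-$ is a Lorentzian plane.

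The decisive step is to show that the semisimple part has no nonzero common fixed vector, i.e.\ $W_0:=\cap_{x\in\got a_s}\ker x=\{0\}$. Since $\got a_n$ commutes with $\got a_s$ it preserves $W_0$ and acts there by commuting nilpotent operators; by Engel's theorem these have a nonzero common kernel vector whenever $W_0\neq\{0\}$, and such a vector lies in $\cap_{x\in\got a'}\ker x=\{0\}$, a contradiction. Thus $W_0=\{0\}$. In particular the purely elliptic alternative is impossible, for a purely elliptic $\got a_s$ generates a relatively compact subgroup of $\SO_+(1,n)$, which by Lemma \ref{lema:SO_fixedvector} fixes a timelike vector and would force $W_0\neq\{0\}$. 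Therefore $\got a_s$ carries the single real weight, $\got v_0$ exists, its orthogonal complement $\got v_0^{\perp}$ is positive definite and $\got a$-invariant, and $\got a_n$ (preserving both $\got v_0$ and $\got v_0^{\perp}$, as it commutes with $\got a_s$) restricts on $\got v_0^{\perp}$ and on $\got v_0\cong\R^{1,1}$ to a nilpotent skew-symmetric operator, hence vanishes; so $\got a_n=0$ and only rotations act.

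Finally I would split $\got v_0^{\perp}$ into the $2$-dimensional irreducible rotation planes $\got v_1,\dots,\got v_l$ of the commuting family $\got a_s|_{\got v_0^{\perp}}$; none of these can be a fixed line, since a fixed vector would again lie in the trivial common kernel, so each $\got v_i$ is Riemannian and irreducible, and $\got v_0$ splits as the sum of the two invariant lightlike lines $\R\ell_\pm$. The main obstacle is precisely this exclusion of a parabolic contribution — guaranteeing that the single negative direction comes from a genuine pair of null eigenlines rather than from a $3$-dimensional nilpotent Jordan block — which is exactly where the hypothesis $\cap_{x\in\got a}\ker x=\{0\}$, via Engel's theorem, is essential.
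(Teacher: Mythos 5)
Your proof is correct, but it takes a genuinely different route from the paper's. The paper argues by induction on $n$ and at the group level: invoking the theorem of di Scala--Olmos \cite{diScala-olmos2001} that no proper connected subgroup of $\SO_+(1,n)$ acts irreducibly on $\R^{1,n}$, the abelian group $\mathcal A=\exp(\got a)$ admits an invariant subspace $V_1$; if $V_1$ is non-degenerate one concludes by the inductive hypothesis together with Theorem \ref{lema:tec}, while if $V_1$ is degenerate its unique lightlike line $\R w_0$ is a common eigenline, the hypothesis $\cap_{x\in\got a}\ker x=\{0\}$ produces a single isometry $T=e^{t_0A_0}\in\mathcal A$ with $Tw_0=\lambda w_0$, $\lambda\neq\pm1$, and the second lightlike eigenvector $w_1$ of $T$ (eigenvalue $1/\lambda$, via \cite{javaloyes-sanchez2010}) spans an eigenspace $E_{1/\lambda}=\R w_1$ preserved by all of $\mathcal A$ by commutativity, yielding the invariant Lorentzian plane $\got v_0$. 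You instead work infinitesimally and algebraically: Jordan--Chevalley decomposition inside the algebraic Lie algebra $\got{so}(1,n)$ (with the key bookkeeping that $A_s$, $A_n$ are polynomials in $A$ without constant term, so the common kernels of $\got a$ and $\got a'$ agree), simultaneous weight-space decomposition of $\C^{n+1}$, signature counting to show all weights are purely imaginary except for at most one real pair with one-dimensional isotropic eigenlines, Engel's theorem to kill the common kernel of $\got a_s$, and Lemma \ref{lema:SO_fixedvector} to exclude the purely elliptic alternative; the nilpotent parts then vanish because a nilpotent skew-symmetric operator on a definite space, or a nilpotent element of $\got{so}(1,1)$, is zero. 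Your argument avoids both the induction and the appeal to the nontrivial irreducibility theorem of \cite{diScala-olmos2001}, at the price of importing the Jordan--Chevalley machinery, and it yields strictly more information (every element of $\got a$ is automatically semisimple, i.e.\ $\got a_n=0$, and the weight structure is explicit); the paper's proof, by contrast, stays elementary at the level of eigenvectors of a single well-chosen isometry and re-uses results already established or cited elsewhere in the paper. Both proofs pivot on the same point: the hypothesis $\cap_{x\in\got a}\ker x=\{0\}$ is what forces the single negative direction to come from a genuine pair of invariant null lines rather than from a fixed or parabolic configuration.
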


\begin{proof}
  We will make induction on $n$. If $n=1$, $\got a = \got{so}(1,1)$ and  $\got v_0=\mathbb R^{1,1}=\mathbb R\cdot(1,1)\oplus\mathbb R\cdot (-1,1)$, so the lemma is proved. Suppose that $n\geq 2$ and the lemma is valid for each $k < n$. Let $\mathcal A$ be the abelian connected Lie subgroup of $\SO_+(n,1)$ with Lie algebra $\got a$. 

  Since $\SO_+(1,n)$ is not abelian, we have that $\mathcal A \subsetneqq \SO_+(n,1)$. From \cite[Theorem 1.1]{diScala-olmos2001}, there are no connected proper subgroups of $\SO_+(1,n)$ which act irreducibly on $\mathbb R^{1,n}$. Therefore the action of $\mathcal A$ leaves  invariant a subspace $V_1$ of $\R^{1,n}$. If $V_1$ is Lorentzian (or Riemannian, in which case $V_1^{\perp}$ is Lorentzian and invariant) we apply the inductive hypothesis together with Theorem \ref{lema:tec} and the lemma is proved.

  Suppose then that $V_1$ is an $\mathcal A$-invariant degenerate subspace of $\R^{1,n}$. In that case, $V_1$ contains a unique lightlike direction, say $\R w_0$, and since $\mathcal A$ acts by isometries and $V_1$ is $\mathcal A$-invariant, $\mathcal A \cdot w_0 \subset \R w_0$. That is, $w_0$ is a common eigenvector of all the elements of $\mathcal A$. 

  There should exist at least one isometry $T \in \mathcal A$ such that $T(w_0) = \lambda w_0$ with $\lambda \neq \pm 1$. In fact, for each $A \in \got a$ there exists a differentiable function $\lambda_A : \R \to \R$ such that 
  \begin{equation*}
      e^{tA}(w_0)=\lambda_A(t)w_0.
  \end{equation*}
  Observe that since $e^{tA}$ is invertible, $\lambda_A(t)\neq 0$ for each $A\in \got a$ and each $t\in \R$. Since $\lambda_A(0)=1$ then $\lambda_A(t)>0$ for each $A \in \got a$ and each $t \in \R$. If we had $\lambda_A \equiv 1$ for each $A\in \got a$, then $e^{tA}w_0 = w_0$ for each $A\in \got a$ and each $t\in \R$ and so $w_0 \in \cap_{A \in \got a} \ker A = \{ 0 \}$, which is a contradiction. Then there exists $T = e^{t_0A_0}$ for some $A_0 \in \got a$ such that $Tw_0 = \lambda w_0$ with $\lambda > 0$ and $\lambda \neq 1$. In particular, $\lambda \neq \pm 1$.

  Since $\lambda \neq \pm1$, $T$ must have a second lightlike eigenvector, say $w_1$, with eigenvalue $1/\lambda$ (cf.~ \cite[Lemma 1.61]{javaloyes-sanchez2010}). 
  Let 
  \begin{equation*}
      \got v_0 = \text{span}\{ w_0, w_1 \}.
  \end{equation*}
  Then $\got v_0$ is a Lorentzian space (cf.~ \cite[Lemma 1.44]{javaloyes-sanchez2010}). We will see that $\got v_0$ is $\mathcal A$-invariant.

  Since $\got v_0$ is Lorentzian of dimension $2$, $U=\got v_0^{\bot}$ is Riemannian of dimension $n-1$ and 
  \begin{equation*}
      \R^{1,n} = \got v_0 \oplus U.
  \end{equation*}
  Let $E_{1/\lambda}$ be the eigenspace of $T$ associated with the eigenvalue $1/\lambda$. Let $w \in E_{1/\lambda}$ and write
  \begin{equation*}
      w = aw_0 + bw_1 + u
  \end{equation*}
  where $u\in U$ is a spacelike vector and $a,b\in \R$. 
  Then on the one hand
  \begin{equation*}
    Tw = \lambda a w_0 + \dfrac{b}{\lambda}w_1 + Tu
  \end{equation*}
  and on the other hand, since $w\in E_{1/\lambda}$, \begin{equation*} Tw = \dfrac{1}{\lambda} w =  \dfrac{a}{\lambda} w_0 + \dfrac{b}{\lambda}w_1 + \dfrac{1}{\lambda} u.
  \end{equation*}
  It follows that $\lambda^2a = a$ and $Tu = (1/\lambda )u$. Since $\lambda\neq \pm1$ we must have $a = 0$. On the other hand, either $u=0$ or $u$ is a spacelike eigenvector of $T$ in $E_{1/\lambda}$. But non lightlike eigenvectors of $T$ must be associated to eigenvalues $\pm 1$ (cf.~ \cite[Prop. 1.57]{javaloyes-sanchez2010}). We conclude that $u = 0$ and therefore $E_{1/\lambda} = \R w_1$. 

  Since all the isometries of $\mathcal A$ commute with $T$, they preserve its eigenspaces and therefore $\mathcal A(\R w_1) \subset \R w_1$. We conclude that $\got v_0$ is $\mathcal A$-invariant as we wanted to see. This together with Lemma \ref{lema:tec} concludes the proof.
\end{proof}

Now we can generalize Theorem \ref{lema:tec} to the case where $(\got g,\got v,\pi)$ is a data set with  Lorentzian $\got v$.

\begin{theorem}\label{teo:imp1}
  Let $(\got g,\got v,\pi)$ be a Lorentzian data set. If $\got v$ is Lorentzian, then:
  \begin{enumerate}
    \item \label{item1teoimp1} $\got v$ decomposes as an orthogonal sum 
    \begin{equation*}
        \got v = \got v_0 \oplus \got v_1 \oplus \cdots \oplus \got v_k
    \end{equation*}
    of $\pi(\got g)$-invariant subspaces, where $\got v_i$ is Riemannian and irreducible for $i\geq 1$ and $\got v_0$ is Lorentzian of dimension $2$, which is in turn the sum of two invariant (and irreducible) subspaces of dimension $1$ generated by lightlike vectors. 
    
    \item\label{item4ateoimp1} For every $i=1,\ldots,k$ there exists a skew-symmetric map $J_i:\got v_i \rightarrow \got v_i$ such that $J_i^2 = - \Id$ and for every $Z\in \got c$,
    \begin{equation*}
        \pi(Z)|_{\got v_i} = \lambda_i(Z) J_i \quad \text{ for some } \lambda_i(Z)\in \R.
    \end{equation*}

    \item \label{item5teoimp1} There exists a map $J_0\in \mathfrak{so}(\mathfrak v_0) \simeq \mathfrak{so}(1,1)$, such that $J_0^2 = \Id$ and for every $Z\in \got c$, 
    \begin{equation*}
        \pi(Z)|_{\got v_0} = \lambda_0(Z) J_0 \quad \text{ for some } \lambda_0(Z)\in \R.
    \end{equation*}
  \end{enumerate}
\end{theorem}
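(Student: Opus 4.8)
The plan is to reduce the Lorentzian statement to the two cases already settled---the abelian one (Lemma~\ref{lema:veryimpor}) and the Riemannian one (Lemma~\ref{lema:tec})---by first splitting off the subspace of $\got v$ fixed by the semisimple part $\overline{\got g}$. Write $\got g=\overline{\got g}\oplus\got c$ as in Proposition~\ref{prop:gcompact} and identify $\got{so}(\got v)\simeq\got{so}(1,n)$. Since $\pi$ is faithful, $\pi(\overline{\got g})$ is a compact semisimple subalgebra of $\got{so}(1,n)$, so Lemma~\ref{lemma:2} shows that the fixed subspace $W_0:=\bigcap_{A\in\overline{\got g}}\ker\pi(A)$ is non-degenerate and contains a timelike vector. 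The first thing I would verify is that $W_0$ is genuinely Lorentzian: any $v\in W_0$ annihilated by all of $\got c$ is annihilated by all of $\got g=\overline{\got g}\oplus\got c$, so the no-trivial-subrepresentation hypothesis forces $\bigcap_{Z\in\got c}\ker\pi(Z)|_{W_0}=\{0\}$; in particular $W_0$ cannot be one-dimensional and is therefore Lorentzian. Its timelike direction being the only one available in $\got v$, the complement $W_0^{\perp}$ is Riemannian. As $\got c$ is central, $\pi(\got c)$ commutes with $\pi(\overline{\got g})$ and hence preserves both $W_0$ and $W_0^{\perp}$, giving an orthogonal $\pi(\got g)$-invariant decomposition $\got v=W_0\oplus W_0^{\perp}$.

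I would then analyse the two summands separately. On $W_0$ the semisimple part $\overline{\got g}$ acts trivially, so the action reduces to the abelian subalgebra $\pi(\got c)|_{W_0}\subset\got{so}(W_0)\simeq\got{so}(1,m)$, whose common kernel is trivial by the computation above; Lemma~\ref{lema:veryimpor} then produces an orthogonal decomposition $W_0=\got v_0\oplus\got v_1'\oplus\cdots\oplus\got v_l'$ with $\got v_0$ the unique Lorentzian $2$-plane spanned by two invariant lightlike lines and the remaining factors Riemannian and irreducible. On $W_0^{\perp}$, which is Riemannian and carries an orthogonal $\got g$-action with no trivial $\pi(\got g)$-subrepresentation, I would apply Lemma~\ref{lema:tec} to split it into Riemannian $\pi(\got g)$-irreducible subspaces each equipped with a complex structure $J_i$ satisfying $\pi(Z)|_{\got v_i}=\lambda_i(Z)J_i$ for $Z\in\got c$. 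Relabelling all the Riemannian factors from both summands as $\got v_1,\dots,\got v_k$ yields statement~\eqref{item1teoimp1}.

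For \eqref{item4ateoimp1} and \eqref{item5teoimp1} I would read off the action of $\got c$ on each factor, using that $\overline{\got g}$ is trivial on every piece coming from $W_0$. On the Riemannian factors inside $W_0^{\perp}$ the maps $J_i$ and scalars $\lambda_i$ are those furnished by Lemma~\ref{lema:tec}; on the Riemannian factors inside $W_0$ the same conclusion follows because a skew-symmetric irreducible action of the abelian algebra $\pi(\got c)$ on a Riemannian space is a rotation, giving $J_i^2=-\Id$. Finally, on $\got v_0$ one has $\got{so}(\got v_0)\simeq\got{so}(1,1)=\R J_0$, and a direct computation shows its single generator satisfies $J_0^2=\Id$; hence $\pi(Z)|_{\got v_0}$ is automatically a scalar multiple $\lambda_0(Z)J_0$, and since $\overline{\got g}$ acts trivially on $\got v_0$ this holds for every $Z\in\got c$, which is~\eqref{item5teoimp1}.

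The step I expect to be the main obstacle is the bookkeeping that keeps every factor invariant under the full algebra $\got g$ rather than merely under $\overline{\got g}$ or $\got c$ separately, together with the verification that $W_0$ is at least two-dimensional and Lorentzian so that Lemma~\ref{lema:veryimpor} contributes exactly one Lorentzian factor $\got v_0$; this is precisely where the absence of trivial subrepresentations is used in an essential way. A minor technical point to address is that $\pi|_{W_0^{\perp}}$ need not be faithful, so Lemma~\ref{lema:tec} has to be invoked for the orthogonal action of the compact algebra $\got g$ on $W_0^{\perp}$, whose proof uses only compactness of $\got g$ and the vanishing of the common kernel, both of which hold here.
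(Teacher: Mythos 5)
Your proposal is correct and follows essentially the same route as the paper's own proof: both split off the fixed subspace $W_0=\bigcap_{A\in\overline{\got g}}\ker\pi(A)$ (the paper's $\got u_0$), use Lemma \ref{lemma:2} together with the no-trivial-subrepresentation hypothesis and skew-symmetry to show it is Lorentzian of dimension at least $2$, and then apply Lemma \ref{lema:veryimpor} to the abelian action of $\pi(\got c)$ on $W_0$ and Theorem \ref{lema:tec} to the Riemannian complement $W_0^{\perp}$. Your observation that $\pi$ restricted to $W_0^{\perp}$ need not be faithful (so one only uses compactness and the vanishing of the common kernel) is a point the paper passes over silently, but it does not alter the argument.
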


\begin{proof}
  From Proposition \ref{prop:gcompact}, $\got g$ is a compact subalgebra of $\got{so}(\got v)\simeq \got{so}(1,n)$ and hence 
  \begin{equation*}
      \got g = \got c \oplus [\got g, \got g]
  \end{equation*}
  where $\got c$ is the center of $\got g$, and $[\got g,\got g]$ is compact and semisimple. Form Corollary \ref{corolario 2}, $\got c \neq 0$. Let 
  \begin{equation*}
      \got u_0 = \bigcap_{Z \in [\got g, \got g]} \ker \pi(Z) \subset \got v.
  \end{equation*}
  Then $\got u_0$ is a $\pi(\got c)$-invariant subspace, since the elements of $\got c$ commute with each element of $[\got g,\got g]$, and hence $\got u_0$ is $\pi(\got g)$-invariant.
  
  Note that since the representation $\pi$ does not admit trivial subrepresentations,  $\dim(\got u_0)\geq 2$. Indeed, by Lemma \ref{lemma:2}, $\got u_0$ contains at least one timelike vector, say $X_0$. If $\dim(\got u_0) = 1$, then $\got u_0=\mathbb Rv_0$ and since $\got u_0$ is $\pi(\got c)$-invariant, it should be $\pi(Z)(X_0) = \lambda_Z X_0$ for each $Z \in \got c$. But since $\pi(\got c) \subset \got{so}(1, n)$, then $\pi(Z)(X_0)$ is orthogonal to $X_0$ and so $\lambda_Z  = 0$ for each $Z\in \got c$. So $X_0 \in \cap_{Z\in \got g} \ker \pi(Z)$, which cannot happen. Therefore, $\dim(\got v_0)=k \geq 2$, and since it contains a timelike vector, it is a Lorentzian space (cf.~ \cite[Proposition 1.44]{javaloyes-sanchez2010}). Hence
  \begin{equation*}
      \got v=\got u_0 \oplus\got u_0^{\bot}
  \end{equation*}
  and $\got u_0^{\bot}$ is Riemannian.
  
  Consider the (possibly non faithfull) representation $\mu:\got c\to \got{so}(\got u_0)\simeq \got{so}(1,k-1)$ such that $\mu(Z)(X)=\pi(Z)(X)$ for each $Z\in \got c$, $X\in \got u_0$ (i.e., $\mu(Z)$ is obtained by restricting the domain and codomain of $\pi(Z)$ to $\got u_0$). Then $\tilde{\got c}:=\mu(\got c)$ is an abelian subalgebra of $\got{so}(1, k-1)$. Observe that $\cap_{z \in \tilde{\got c}}\ker z = \cap_{Z\in \got c} \ker \mu(Z)=\{0\}$. Indeed, if $X\in \got u_0$ and  $\mu(Z)(X)=0$ for each $Z\in \got c$, then $\pi(Z)(X)=0$ and hence $X\in \cap_{Z\in \got g} \ker\pi(Z) = \{0\}$. In particular, $\tilde{\got c}\neq\{0\}$.
  
  From Lemma \ref{lema:veryimpor}, $\got u_0$ is the sum 
  \begin{equation*}
      \got u_0 = \got v_0 \oplus \cdots \oplus \got v_l
  \end{equation*}
  of $\tilde{\got c}$-invariant subspaces, such that $\got v_i$ is Riemannian and irreducible for $i \geq 1$ and $\got v_0$ is Lorentzian of dimension 2, which is in turn the sum of two invariant and irreducible subspaces of dimension 1 generated by lightlike vectors. Since for each $Z\in \got c$, $\pi(Z)(\got v_i)=\mu(Z)(\got v_i)\subset \got v_i$, and for every $Z\in [\got g, \got g]$, $\pi(Z) ( \got v_i) = \{ 0 \}$, the spaces $\got v_i$ are $\pi(\got g)$-invariant subspaces for every $i=0,...,l$.
  
  On the other hand, since $\got u_0^{\perp}$ is Riemannian, it can be decomposed as a sum 
  \begin{equation*}
      \got u_0^{\perp} = \got v_{l+1}\oplus \cdots \oplus \got v_k
  \end{equation*}
  of $\pi(\got g)$-invariant and irreducible subspaces. This concludes the proof of item \eqref{item1teoimp1}.
  
  The proof of item \eqref{item4ateoimp1} follows in the same way as in the Riemannian case (see \cite[Lemma 3.11]{lauret1999}). In order to prove item \eqref{item5teoimp1}, observe first that since $\pi([\got g,\got g])(\got v_0)=0$ then $\pi(\got c)(\got v_0)\neq 0$, otherwise we would have $\cap_{Z \in \got g} \ker\pi(Z) \neq \{0\}$. If $J_0:\got v_0\to \got v_0$ is the linear map that interchanges an orthonormal basis of $\got v_0$, then $J_0^2 = \Id$ and $\got{so}(\got v_0) = \mathbb R\cdot J_0$. Then for each $Z\in \got c$ there exists some $\lambda_0(Z)\in \mathbb R$ such that  $\pi(Z)|_{\got v_0} = \lambda_0(Z) J_0$. 
\end{proof}

We shall prove next that the kernel of any of the maps $\pi(Z)$ for $Z\in \got g$ can be decomposed accordingly to de decompositions of $\got v$ given by Theorems \ref{lema:tec} and \ref{teo:imp1}, and as a consequence that $\ker\pi(Z)$ is always a non-degenerate subspace of $\got v$.

\begin{lemma}\label{lema:ker_nodeg}
  Let $(\got g,\got v,\pi)$ be a Lorentzian data set. Decompose 
  \begin{equation*}
      \got v=\got v_0\oplus\got v_1\oplus\cdots\oplus\got v_k
  \end{equation*}
  into $\pi(\got g)$-invariant irreducible subspaces, with $\got v_0=\{0\}$ if $\got v$ is Riemannian, or $\got v_0$ a Lorentzian $2$-dimensional subspace of $\got v$ if $\got v$ is Lorentzian. Fix $Z \in \got g$ and set $\got b_i = (\ker \pi(Z)) \cap \got v_i$ and $\got w_i$ the orthogonal complement of $\got b_i$ in $\got v_i$. Then 
  \begin{equation*}
      \ker\pi(Z)=\got b_0 \oplus \got b_1 \oplus \cdots\oplus \got b_k
  \end{equation*}
  with $\got b_0 = \{0\}$ or $\got b_0 = \got v_0$. In particular, $\ker\pi(Z)$ is non-degenerate, it is Lorentzian if and only if $\got v_0 \subset  \ker\pi(Z)$, and
  \begin{equation*}
      \ker\pi(Z)^{\perp}=\got w_0\oplus \got w_1\oplus\cdots\oplus\got w_k.
  \end{equation*}
\end{lemma}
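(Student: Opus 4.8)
The plan is to exploit that each $\got v_i$ is $\pi(\got g)$-invariant, so that $\pi(Z)$ is block-diagonal with respect to the given decomposition and its kernel splits accordingly; the only genuinely delicate point is the behaviour of the Lorentzian block $\got v_0$, everything else being bookkeeping with orthogonal sums of non-degenerate subspaces.

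First I would observe that, since every $\got v_i$ is $\pi(\got g)$-invariant (Theorems~\ref{lema:tec} and~\ref{teo:imp1}), the operator $\pi(Z)$ restricts to each $\got v_i$, whence $\ker\pi(Z)=\bigoplus_{i=0}^k\big(\ker\pi(Z)\cap\got v_i\big)=\bigoplus_{i=0}^k\got b_i$. As the $\got v_i$ are mutually orthogonal, this is an orthogonal direct sum with $\got b_i\subset\got v_i$. The key step is then to show that $\got b_0\in\{\{0\},\got v_0\}$. If $\got v$ is Riemannian, $\got v_0=\{0\}$ and there is nothing to prove. If $\got v$ is Lorentzian, then $\got v_0$ is $2$-dimensional Lorentzian, and since $\pi(Z)\in\got{so}(\got v)$ preserves $\got v_0$, its restriction lies in $\got{so}(\got v_0)\simeq\got{so}(1,1)=\R\cdot J_0$, which is one-dimensional and generated by the map $J_0$ of Theorem~\ref{teo:imp1}\eqref{item5teoimp1} satisfying $J_0^2=\Id$ (in particular $J_0$ is invertible). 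Hence $\pi(Z)|_{\got v_0}$ is a scalar multiple of $J_0$: it is either $0$, giving $\got b_0=\got v_0$, or invertible, giving $\got b_0=\{0\}$. This dichotomy is exactly the claim.

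With this in hand the remaining assertions follow. For $i\ge 1$ the space $\got v_i$ is Riemannian (positive definite), so $\got b_i$ is automatically non-degenerate; and $\got b_0$ equals $\{0\}$ or the Lorentzian space $\got v_0$, hence is non-degenerate as well. An orthogonal direct sum of non-degenerate subspaces is non-degenerate, so $\ker\pi(Z)$ is non-degenerate. Moreover each $\got b_i$ with $i\ge1$ is positive definite, so the only possible timelike direction of $\ker\pi(Z)$ lies in $\got b_0$; therefore $\ker\pi(Z)$ is Lorentzian precisely when $\got b_0=\got v_0$, i.e.\ when $\got v_0\subset\ker\pi(Z)$. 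Finally, since each $\got v_i$ and each $\got b_i$ is non-degenerate, the splitting $\got v_i=\got b_i\oplus\got w_i$ is orthogonal within $\got v_i$, and combining these over $i$ with the mutual orthogonality of the $\got v_i$ yields $\ker\pi(Z)^{\perp}=\bigoplus_{i=0}^k\got w_i$.

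The main (and essentially only) obstacle is the all-or-nothing behaviour of $\got b_0$; once one notices that $\got{so}(\got v_0)$ is one-dimensional and generated by an invertible element, the non-degeneracy, the signature criterion, and the description of the orthogonal complement all reduce to elementary properties of orthogonal decompositions into non-degenerate summands.
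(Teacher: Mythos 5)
Your proof is correct and follows essentially the same route as the paper's: split $\ker\pi(Z)$ along the $\pi(\got g)$-invariant blocks $\got v_i$, then use that $\pi(Z)|_{\got v_0}$ lies in the one-dimensional algebra $\got{so}(\got v_0)\simeq\got{so}(1,1)$, so it is either zero or invertible, forcing $\got b_0\in\{\{0\},\got v_0\}$. You simply spell out the non-degeneracy, signature, and orthogonal-complement conclusions that the paper dismisses as immediate, which is a faithful (and slightly more detailed) rendering of the same argument.
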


\begin{proof} 
  Let $X \in \ker \pi(Z)$ and decompose $X = X_0 + X_1 + \cdots + X_k$ with $X_i \in \got v_i$. Then $\pi(Z)(X) = 0$ if and only if $\sum \pi(Z)(X_i) = 0$, and since $\got v_i$ are $\pi(Z)$-invariant subspaces of $\got v$, we get that $\pi(Z)(X_i) = 0$ for each $i=0,\ldots,k$. So $X\in \got b_0 \oplus \got b_1 \oplus \cdots \oplus \got b_k$. The other inclusion is immediate. 

  If $\got v$ is Riemannian, the proof is complete. Suppose $\got v$ is Lorentzian, so $\got v_0 \neq \{0\}$. Observe that $\got b_i = \ker(\pi(Z)|_{\got v_i})$. Since $\pi(\got g)$ acts (perhaps non faithfully) on $\got v_0$ as $\got{so}(1,1)$, then either $\ker (\pi(Z)|_{\got v_0}) = \{ 0 \}$ or $\ker (\pi(Z)|_{\got v_0}) = \got v_0$. In any case, $\ker \pi(Z)$ is non-degenerate. The last assertion follows immediately. 
\end{proof}

\begin{observation}\label{rem:lema_ker}
    Observe that the subspaces $\got b_i$ or $\got w_i$ in the decomposition of $\ker\pi(Z)$ and $(\ker\pi(Z))^{\perp}$ given in Lemma \ref{lema:ker_nodeg} are not necessarily $\pi(\got g)$-invariant. 
    
    However, if $Z\in \got c$, the center of $(\got g, [\cdot, \cdot]_{\got g})$, then for each $Z'\in \got g$, $\pi(Z')$ commutes with $\pi(Z)$ and so $\pi(Z')$ leaves $\ker\pi(Z)$ invariant. As a consequence, $\got b_i = \got v_i\cap (\ker\pi(Z))$ is a $\pi(\got g)$-invariant subspace of $\got v_i$.  Since for each $i=1,\ldots,k$, $\got v_i$ is irreducible with respect to the action of $\pi(\got g)$, then  either $\got b_i = \{0\}$, and in consequence $\got w_i = \got v_i$, or $\got b_i = \got v_i$ and $\got w_i = \{0\}$. 
\end{observation}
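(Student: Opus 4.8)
The plan is to separate the two assertions of the remark. The first one — that for a general $Z\in\got g$ the pieces $\got b_i$ and $\got w_i$ need not be $\pi(\got g)$-invariant — is a cautionary statement rather than a theorem to be established. I would back it with the observation that the decomposition $\ker\pi(Z)=\got b_0\oplus\cdots\oplus\got b_k$ of Lemma \ref{lema:ker_nodeg} is built from the \emph{fixed} irreducible pieces $\got v_i$, while a non-central $Z$ produces a map $\pi(Z)$ that in general fails to commute with the rest of $\pi(\got g)$; hence there is no reason for $\ker\pi(Z)$, nor for its intersections with the $\got v_i$, to be preserved by all of $\pi(\got g)$. If one wants full rigor here, the natural route is to exhibit a small explicit data set (for instance with $\got g$ having two factors acting nontrivially on a common $\got v_i$) in which $\got b_i$ is not invariant.

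The substantive content is the case $Z\in\got c$, and the key step is a commutation argument. Since $Z$ is central in $(\got g,[\cdot,\cdot]_{\got g})$, we have $[Z,Z']_{\got g}=0$ for every $Z'\in\got g$, and because $\pi$ is a representation, $[\pi(Z),\pi(Z')]=\pi([Z,Z']_{\got g})=0$. From this commutativity I would deduce that $\pi(Z')$ preserves $\ker\pi(Z)$: if $\pi(Z)X=0$ then $\pi(Z)\big(\pi(Z')X\big)=\pi(Z')\big(\pi(Z)X\big)=0$. Thus $\ker\pi(Z)$ is $\pi(\got g)$-invariant, and consequently so is $\got b_i=\got v_i\cap\ker\pi(Z)$, being the intersection of two $\pi(\got g)$-invariant subspaces.

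Finally I would invoke irreducibility to obtain the dichotomy. For $i=1,\ldots,k$ the subspace $\got v_i$ is $\pi(\got g)$-irreducible by Theorem \ref{teo:imp1}, so the invariant subspace $\got b_i\subset\got v_i$ is either $\{0\}$, in which case $\got w_i=\got v_i$, or all of $\got v_i$, in which case $\got w_i=\{0\}$; in either situation $\got w_i$ is itself invariant. I do not expect a genuine obstacle in this argument. The only point requiring care is the Lorentzian factor $\got v_0$, which is \emph{not} irreducible and therefore must be handled separately: on $\got v_0$ the algebra $\pi(\got c)$ acts as $\got{so}(1,1)$, so Lemma \ref{lema:ker_nodeg} already yields $\got b_0=\{0\}$ or $\got b_0=\got v_0$, which is exactly the conclusion the irreducibility argument gives for the remaining factors.
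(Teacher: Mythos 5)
Your proof is correct and follows essentially the same route as the paper's own argument: centrality of $Z$ gives $[\pi(Z),\pi(Z')]=\pi([Z,Z']_{\got g})=0$, hence $\ker\pi(Z)$ is $\pi(\got g)$-invariant, so each $\got b_i=\got v_i\cap\ker\pi(Z)$ is an intersection of invariant subspaces and therefore invariant, and irreducibility of $\got v_i$ for $i\geq 1$ forces the dichotomy $\got b_i=\{0\}$ or $\got b_i=\got v_i$. Your separate treatment of the reducible Lorentzian factor $\got v_0$ via Lemma \ref{lema:ker_nodeg} is a harmless addition beyond what the remark actually claims, and your reading of the first assertion as a cautionary (unproved) observation matches the paper.
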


\section{The isotropy algebra $\got{h}^{\operatorname{aut}}$} \label{section4}

Let $(\got g, \got v, \pi)$ be a Lorentzian data set and let $N=N(\got g, \got v,\pi)$ be the simply connected $2$-step nilpotent Lorentzian Lie group with Lie algebra $\got n=\got n(\got g,\got v,\pi)=\got g\oplus \got v$. The Lie algebra of the Lie group $\Iso^{\operatorname{aut}}(N) = N \rtimes H^{\operatorname{aut}}$, is given by
\begin{equation*}
  \got{iso}^{\aut}(N) = \got n \rtimes \got h^{\operatorname{aut}}.
\end{equation*}
So, in order to obtain $\Iso^{\aut}(N)$, one only need to compute $H^{\aut}$. It was proved in \cite{delbarco2014, ovando2010} that
\begin{equation}\label{eq:H_aut}
  H^{\aut} = \{ (\phi, T) \in \OO(\got g, \langle \cdot, \cdot \rangle_{\got g}) \times \OO(\got v, \langle \cdot, \cdot \rangle_{\got v}) : \pi(\phi Z) = T \pi(Z) T^{-1} \text{ for every } Z \in \got g\},
\end{equation}
and that its Lie algebra is
\begin{equation}\label{eq:H_aut_lie}
  \got{h}^{\operatorname{aut}} =  \{ (A,B)\in \got{so}(\got g,\langle \cdot, \cdot \rangle_{\got g}) \times \got{so}(\got v,\langle \cdot, \cdot \rangle_{\got v}) : [B,\pi(Z)] = \pi(AZ) \text{ for every } Z\in \got g\}.
\end{equation}

In this section, we will give a simpler description of $\got h^{\operatorname{aut}}$ which is analogous to that of the Riemannian case proved in \cite[Theorem 3.12]{lauret1999}. In \cite{ovando2010}, such a description was given  pseudo-Riemannian spaces under the assumption that $\got g$ is semisimple (cf. the discussion after \cite[Proposition 3.5]{ovando2010}), but as we have observed in Corollary \ref{corolario 1} this is never the case when $\got n$ is Lorentzian. 

Given a data set $(\got g,\got v,\pi)$ denote by $\End_{\pi}(\got v)$ the set of intertwining endomorphisms of $\got v$ with respect to $\pi$, that is $B \in \End(\got v)$ is in $\End_{\pi}(\got v)$ if 
\begin{equation*}
    \pi(Z)B(X) = B(\pi(Z)X)
\end{equation*}
for every $Z \in \got g$ and every $X \in \got v$, i.e., $[B, \pi(Z)] = 0$ for every $Z \in \got g$. Then:

\begin{theorem}\label{teo:16}
  Let $(\got g, \got v, \pi)$ be a Lorentzian data set. Decompose $\got g = \overline{\got g} \oplus \got c$ where $\overline{\got g}=[\got g,\got g]$ is compact semisimple and $\got c$ is the center of $\got g$. Then 
  \begin{equation*}
    \got h^{\aut} = \overline{\got g} \oplus \got u, \qquad [\overline{\got g}, \got u] = 0,
  \end{equation*}
  where $\got u = \End_{\pi}(\got v) \cap \got{so}(\got v) = \{B \in \got{so}(\got v): [B, \pi(Z)] = 0 \text{ for every } Z \in \got g\}$, and $\overline{\got g}$  acts on $\got n = \got n(\got g, \got v, \pi) = \got g \oplus \got v$ as $(\ad(Z), \pi(Z))$ for every $Z \in \overline{\got g}$.
\end{theorem}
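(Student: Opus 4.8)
The plan is to work throughout from the description \eqref{eq:H_aut_lie}: a pair $(A,B)\in\got{so}(\got g)\times\got{so}(\got v)$ lies in $\got h^{\aut}$ precisely when $[B,\pi(Z)]=\pi(AZ)$ for all $Z\in\got g$. First I would exhibit the two summands as subalgebras of $\got h^{\aut}$. The map $Z\mapsto(\ad Z,\pi Z)$ sends $\overline{\got g}$ into $\got h^{\aut}$: here $\ad Z\in\got{so}(\got g)$ by ad-invariance of $\langle\cdot,\cdot\rangle_{\got g}$, $\pi Z\in\got{so}(\got v)$ by the definition of a data set, and the defining relation is exactly $[\pi Z,\pi W]=\pi([Z,W])=\pi(\ad Z(W))$ because $\pi$ is a homomorphism; this map is injective since $\pi$ is faithful and $\overline{\got g}$ has trivial center. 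Likewise every $B\in\got u$ yields $(0,B)\in\got h^{\aut}$. The relation $[\overline{\got g},\got u]=0$ is then immediate, since the bracket of $\got h^{\aut}\subset\got{so}(\got g)\oplus\got{so}(\got v)$ is componentwise and $[\pi Z,B]=0$ for $B\in\End_{\pi}(\got v)$.

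The substance is surjectivity: given $(A,B)\in\got h^{\aut}$, I want $Z_0\in\overline{\got g}$ with $A=\ad Z_0$ and $B-\pi(Z_0)\in\got u$. Applying the commutator relation and the Jacobi identity in $\End(\got v)$, together with faithfulness of $\pi$, first shows that $A$ is a skew-symmetric derivation of $\got g$: from $\pi(A[W,W'])=[B,[\pi W,\pi W']]=\pi([AW,W']+[W,AW'])$ one cancels $\pi$. A derivation preserves the characteristic ideal $\overline{\got g}=[\got g,\got g]$ and sends $\got c$ into itself (if $Z\in\got c$ then $[AZ,W]=-[Z,AW]=0$ for all $W$, so $AZ\in\got c$). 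Since $\overline{\got g}$ is semisimple all its derivations are inner, so $A|_{\overline{\got g}}=\ad_{\overline{\got g}}(Z_0)$ for a unique $Z_0\in\overline{\got g}$. Once $A|_{\got c}=0$ is known, $A=\ad_{\got g}(Z_0)$ on all of $\got g$ (as $\ad Z_0$ kills $\got c$), and $B':=B-\pi(Z_0)$ is skew-symmetric with $[B',\pi W]=\pi(AW)-\pi(\ad Z_0(W))=0$, hence $B'\in\got u$; directness follows from $\overline{\got g}\cap\got u=0$, again by triviality of the center of $\overline{\got g}$.

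The main obstacle is therefore to prove $A|_{\got c}=0$, and this is where the Lorentzian structure enters through Theorem \ref{teo:imp1}. For $Z\in\got c$ we have $AZ\in\got c$, so by Theorem \ref{teo:imp1} both $\pi(Z)$ and $\pi(AZ)$ are block-diagonal relative to the orthogonal decomposition $\got v=\got v_0\oplus\cdots\oplus\got v_k$, acting on $\got v_i$ as $\lambda_i(\cdot)J_i$ with $J_i^2=-\Id$ for $i\geq1$ and $J_0^2=\Id$. Writing $B=(B_{ij})$ in blocks and using that $\pi(Z)$ is block-diagonal, the $(i,i)$-part of $\pi(AZ)=[B,\pi(Z)]$ reads $\lambda_i(AZ)J_i=\lambda_i(Z)[B_{ii},J_i]$ on $\got v_i$. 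Multiplying by $J_i$ and taking traces, the right-hand side vanishes by cyclicity, since $\trace([B_{ii},J_i]J_i)=0$, while $\trace(J_i^2)=\pm\dim\got v_i\neq0$; hence $\lambda_i(AZ)=0$ for every $i$. Therefore $\pi(AZ)=0$, and faithfulness of $\pi$ gives $AZ=0$. As $Z\in\got c$ was arbitrary, $A|_{\got c}=0$.

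I expect the only delicate points to be organisational: keeping track of the block decomposition of $B$, verifying that the $(i,i)$-block of the commutator collapses to $[B_{ii},\pi(Z)|_{\got v_i}]$ because $\pi(Z)$ and $\pi(AZ)$ are block-diagonal, and applying the trace identity on each $\got v_i$ with the correct sign of $\trace(J_i^2)$. The recurrent use of $\got c\neq\{0\}$ (Corollary \ref{corolario 2}) is what forces this center-handling step and distinguishes the argument from the semisimple situation treated in \cite{ovando2010}; everything else is formal.
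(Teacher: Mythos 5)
Your proof is correct, and its skeleton matches the paper's: embed $\overline{\got g}$ and $\got u$ into $\got h^{\aut}$ via \eqref{eq:H_aut_lie}, show that the $\got{so}(\got g)$-component $A$ of any $(A,B)\in\got h^{\aut}$ is a derivation of $\got g$, use that derivations of the compact semisimple part are inner, and reduce everything to proving $A|_{\got c}=0$. Where you genuinely diverge is in that last, crucial step. The paper first normalizes to $(A',B')=(A-\ad(Z_0),B-\pi(Z_0))$, fixes $0\neq Z\in\got c$, and argues in two stages: on $\ker\pi(Z)$ it uses skew-symmetry of $B'$ together with the non-degeneracy of $\ker\pi(Z)$ (Lemma \ref{lema:ker_nodeg}), and on $(\ker\pi(Z))^{\perp}=\bigoplus_{i\in I}\got v_i$ (via Remark \ref{rem:lema_ker}) it derives $K_i^{-1}B_i'K_i-B_i'=\alpha_i\Id$ and concludes $\alpha_i=0$ because the left-hand side lies in $\got{so}(\got v_i)$. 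You instead keep the original pair $(A,B)$ and dispose of all blocks simultaneously: since $Z,AZ\in\got c$, both $\pi(Z)$ and $\pi(AZ)$ are block-diagonal of the form $\lambda_i(\cdot)J_i$, the $(i,i)$-block of $[B,\pi(Z)]$ collapses to $\lambda_i(Z)[B_{ii},J_i]$, and pairing with $J_i$ under the trace gives
\begin{equation*}
  \lambda_i(AZ)\trace(J_i^2)=\lambda_i(Z)\trace([B_{ii},J_i]J_i)=0,
\end{equation*}
whence $\lambda_i(AZ)=0$ because $\trace(J_i^2)=\pm\dim\got v_i\neq 0$. This bypasses Lemma \ref{lema:ker_nodeg} and Remark \ref{rem:lema_ker} entirely, requires no preliminary subtraction of $(\ad(Z_0),\pi(Z_0))$, and uses only cyclicity of the trace rather than skew-symmetry of the blocks, so it is shorter and arguably more transparent; what the paper's route keeps, and yours loses, is the explicit non-degenerate splitting $\got v=\ker\pi(Z)\oplus(\ker\pi(Z))^{\perp}$, which has some independent geometric content. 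One point needs patching: you cite only Theorem \ref{teo:imp1}, which assumes $\got v$ Lorentzian; a Lorentzian data set can instead have $\got g$ Lorentzian and $\got v$ Riemannian, in which case you must quote Theorem \ref{lema:tec} (same decomposition with $\got v_0=\{0\}$ and all $J_i^2=-\Id$), after which your trace computation applies verbatim. This is a cosmetic fix, not a gap.
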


\begin{proof}
  For simplicity, throughout this proof we will write $\got h=\got h^{\operatorname{aut}}$. We reserve the notation $[\cdot,\cdot]$ for the usual Lie bracket in $\got h\subset \End(\got n)$ and denote by $[\cdot,\cdot]_{\got g}$ the Lie bracket in $\got g$ and by $[\cdot,\cdot]_{\got n}$ the Lie bracket in $\got n$ defined by~\eqref{eq:bracketNR}. 

  Recall that $\got h$ is the Lie algebra of skew-symmetric derivations of $(\got n,[\cdot,\cdot]_{\got n})$ (cf. Equation \eqref{eq:haut_lie}) and that $\got g$ is the center of $(\got n,[\cdot,\cdot]_{\got n})$. So, if $D\in \got h$, then $D$ preserves $\got g$ and its orthogonal complement $\got v$.

  Suppose that $D = (A,B)$, with 
  \begin{equation*}
      A\in \got{so}(\got g,\langle\cdot,\cdot\rangle_\got g), \ \ \ B\in \got{so}(\got v,\langle\cdot,\cdot\rangle_\got v)
  \end{equation*}
  (cf. Equation \eqref{eq:H_aut_lie}). With the same argument as in the proof of \cite[Theorem 3.12]{lauret1999}, one can prove that $A$ is a derivation of $(\got g,[\cdot,\cdot]_{\got g})$ (cf. also \cite[Proposition 3.5]{ovando2010}). Therefore, the commutator $\overline{\got g}$ and the center $\got c$ of $\got g$  are $A$-invariant subspaces and, since $\overline{\got g}$ is semisimple, there exists an element $Z_0\in \overline{\got g}$ such that 
  \begin{equation*}
      A|_{\overline{\got g}} = \ad(Z_0)|_{\overline{\got g}}.
  \end{equation*}

  On the other hand, also following \cite{lauret1999}, one has that $(\ad(Z_0),\pi(Z_0))$ is a skew-symmetric derivation of $\got n$, i.e., $(\ad(Z_0), \pi(Z_0))\in \got h$. Hence 
  \begin{equation*}
      (A',B') = (A - \ad(Z_0), B-\pi(Z_0))
  \end{equation*}
  is an element of $\got h$ that satisfies
  \begin{equation}\label{eq:A'}
    A'|_{\overline{\got g}} = 0\ \text{ and }\ A'\got c \subset \got c.
  \end{equation}
  Let us prove that $A'|_{\got c} = 0$. This together with \eqref{eq:A'} will imply that $A'=0$.  

  Let $0 \neq Z \in \got c$. Recall that from Lemma \ref{lema:ker_nodeg}, $\ker\pi(Z)$ is a non-degenerate subspace of $\got v$. So $\got v$ decomposes orthogonally as 
  \begin{equation*}
      \got v=\ker\pi(Z) \oplus (\ker\pi(Z))^{\perp}.
  \end{equation*}
  We shall prove first that $\pi(A'Z)|_{\ker\pi(Z)}=0$. From \eqref{eq:H_aut_lie}, we have that
  \begin{equation}\label{eq:CasoRieman}
    B'\circ \pi(Z) - \pi(Z)\circ B' = \pi(A'Z).
  \end{equation}
  So if $X,Y \in \ker \pi(Z)$, we have 
  \begin{equation*}
      \langle \pi(A'Z)X, Y \rangle_{\got v} =  \langle B'(\pi(Z)X) - \pi(Z)(B'X), Y \rangle_{\got v} = \langle -\pi(Z)(B'X), Y \rangle_{\got v} = \langle B'X, \pi(Z)Y \rangle_{\got v} = 0.
  \end{equation*}
  Since $\ker\pi(Z)$ is non-degenerate, this implies that $\pi(A'Z) \equiv 0$ in $\ker \pi(Z)$ as we wanted to see.

  Let us see now that $\pi(A'Z)|_{(\ker\pi(Z))^{\perp}} = 0$. Consider the orthogonal decomposition of $\got v$ into $\pi(\got g)$-invariant subspaces given by Theorem \ref{lema:tec} if $\got v$ is Riemannian and Theorem \ref{teo:imp1} if $\got v$ is Lorentzian, i.e., 
  \begin{equation*}
      \got v = \got v_0\oplus\got v_1\oplus\cdots\oplus\got v_k
  \end{equation*}
  where $\got v_0 = \{0\}$ if $\got v$ is Riemannian and $\got v_0$ is Lorentzian of dimension $2$ if $\got v$ is Lorentzian. 

  Let $I=\{ i\in\{0,\ldots,k\}\,:\,(\ker\pi(Z))\cap \got v_i = \{ 0 \} \}$. Then
  from Remark \ref{rem:lema_ker} we have that 
  \begin{equation*}
      (\ker\pi(Z))^{\perp}=\bigoplus_{i\in I}\got v_i.
  \end{equation*}

  Fix $i\in I$. Then from Theorems \ref{lema:tec} and \ref{teo:imp1}, there exists a non-singular endomorphism $J_i\in \got{so}(\got v_i)$  and a function $\lambda_i: \got c \to  \R$ such that $\pi|_{\got c}= \lambda_i J_i$ ($J_i$ actually verifies $J_i^{-1} = -J_i$ if $i\neq 0$ and $J_0^{-1} = J_0$). Since $\pi(Z)|_{\got v_i}\neq 0$, then $\lambda_i(Z) \neq 0$. Let $K_i = \lambda_i(Z)J_i$ and $\alpha_i = \lambda_i(A'Z)/\lambda_i(Z)$. Then $K_i\in \got{so}(\got v_i)$ is a non-singular endomorphism of $\got v_i$ such that   
  \begin{equation*}
      \pi(Z)|_{\got v_i} = K_i \ \ \text{and} \ \ \pi(A'Z)|_{\got v_i}=\alpha_iK_i.
  \end{equation*}
  Define $B_i' = p_i \circ B'|_{\got v_i}: \got v_i \rightarrow \got v_i$, where $p_i$ denotes the orthogonal projection of $\got v$ onto $\got v_i$. Then $B_i'\in \got{so}(\got v_i)$ and from \eqref{eq:CasoRieman}, we have $B'_i K_i - K_i B_i' = \alpha_i K_i.$ So, 
  \begin{equation*}
    K_i^{-1} B'_i K_i - B_i' = \alpha_i\Id.
  \end{equation*} 
  Since $B_i',\,K_i'\in \got{so}(\got v_i)$, the left hand in the above equation is an element of $\got{so}(\got v_i)$ and so $\alpha_i=0$. We get $\pi(A'Z)|_{(\ker\pi(Z))^{\perp}} = 0$ as we wanted to see. 

  So we have that  $\pi(A'Z) = 0$ for each $Z \in \got c$, and since $\pi$ is faithful we conclude that $A'|_{\got c}=0$ and so $A'=0$. Hence, every element $D=(A,B)$ of $\got h$ is the form 
  \begin{equation*}
      D = (\ad(Z_0), \pi(Z_0)) + (0,B')
  \end{equation*}
  where $Z_0 \in \overline{\got g}$ and $B' = B - \pi(Z_0) \in \End_{\pi}(\got v) \cap \got{so}(\got v) = \got u$. Finally, observe that since $\pi$ is a faithful representation, $\varphi: \overline{\got g}\to \got h$ given by
  \begin{equation}\label{eq:monophi}
    \varphi(Z) = (\ad(Z), \pi(Z))
  \end{equation} 
  is a Lie algebra monomorphism and so $\overline{\got g}$ identifies with the Lie subalgebra $\varphi(\overline{\got g})=\{ (\ad(Z), \pi(Z)) : Z \in \overline{\got g}\}$ of $\got h$. We can also identify $\got u$ with $\{(0,B) : B \in \End_\pi(\got v) \cap \got{so}(\got v)\}$ and it follows that, with these identifications, $\got h = \overline{\got g} \oplus \got u$. From the definition of $\got u$, it is immediate that $\overline{\got g}$ commutes with $\got u$. Therefore $\got h = \overline{\got g} \oplus \got u$ as a sum of ideals.
\end{proof}

\begin{corollary}\label{cor:H0}
  Let $(\got g, \got v, \pi)$ be a Lorentzian data set. Decompose $\got g = \overline{\got g} \oplus \got c$, where $\overline{\got g}=[\got g,\got g]$ is compact semisimple and $\got c$ is the center of $\got g$. Then the identity component of $H^{\operatorname{aut}}$ is 
  \begin{equation*}
      (H^{\operatorname{aut}} )_0= G \times U_0
  \end{equation*}
  where 
  $U= \End_{\pi}(\got v)\cap \OO(\got v,\langle \cdot, \cdot \rangle_{\got v}) $, $G = \overline{G}/\ker \pi$ and $\overline{G}$ is the simply connected Lie group with Lie algebra $\overline{\got g}$. The group $U$ acts trivially on $\got g$ and if we also denote by $\pi$ the corresponding representation of $G$ on $\got v$, then each $g \in G$ acts on $\got n = \got g \oplus \got v$ by $(\Ad(g), \pi(g))$.
\end{corollary}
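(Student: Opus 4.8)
The statement is the integrated, group-level version of Theorem~\ref{teo:16}, so the plan is to pass from the Lie algebra splitting $\got h^{\aut}=\overline{\got g}\oplus\got u$ to the identity component of $H^{\aut}$. Recall from \eqref{eq:H_aut} that $H^{\aut}$ is a closed subgroup of $\OO(\got g,\langle\cdot,\cdot\rangle_{\got g})\times\OO(\got v,\langle\cdot,\cdot\rangle_{\got v})$ whose Lie algebra is $\got h^{\aut}$, as computed in \eqref{eq:H_aut_lie}. Hence $(H^{\aut})_0$ is exactly the connected Lie subgroup integrating $\got h^{\aut}$. By Theorem~\ref{teo:16} this algebra decomposes as a sum of two commuting ideals $\overline{\got g}\oplus\got u$, so $(H^{\aut})_0$ is the product of the two connected subgroups integrating each ideal, and these subgroups commute because $[\overline{\got g},\got u]=0$.

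The factor coming from $\got u$ is immediate: $\got u=\End_\pi(\got v)\cap\got{so}(\got v)$ is the Lie algebra of the closed subgroup $U=\End_\pi(\got v)\cap\OO(\got v,\langle\cdot,\cdot\rangle_{\got v})$, so the connected subgroup it integrates is the identity component $U_0$, and every element of $U$ acts trivially on $\got g$ since the elements of $\got u$ are of the form $(0,B)$.

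The factor coming from $\overline{\got g}$ is where the main work lies. Because $\overline{G}$ is simply connected, the monomorphism $\varphi\colon\overline{\got g}\to\got h^{\aut}$, $\varphi(Z)=(\ad(Z),\pi(Z))$, of \eqref{eq:monophi} integrates to a Lie group homomorphism $\Phi\colon\overline{G}\to(H^{\aut})_0$ with $\Phi(g)=(\Ad(g),\pi(g))$, whose image is precisely the connected subgroup integrating $\varphi(\overline{\got g})$. To identify this image I would compute $\ker\Phi=\ker(\Ad)\cap\ker(\pi)$. Since $\pi|_{\overline{\got g}}$ is faithful, the differential of the group homomorphism $\pi\colon\overline{G}\to\OO(\got v)$ is injective, so $\ker\pi$ is discrete, hence central, and therefore contained in $Z(\overline{G})=\ker(\Ad)$ (here one uses that $\overline{\got g}$ is semisimple, so the kernel of the adjoint representation of $\overline{G}$ is exactly its center). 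Thus $\ker\Phi=\ker\pi$ and the image is $\Phi(\overline{G})\cong\overline{G}/\ker\pi=G$. The inclusion $\ker\pi\subset\ker(\Ad)$ also guarantees that $\Ad(g)$ and $\pi(g)$ both descend to $G$, so the asserted action $(\Ad(g),\pi(g))$ of $g\in G$ on $\got n=\got g\oplus\got v$ is well defined; on $\got g=\overline{\got g}\oplus\got c$ it is the adjoint action on $\overline{\got g}$ and the identity on $\got c$, matching the infinitesimal action $\ad(Z)$ of $Z\in\overline{\got g}$.

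Combining the two factors yields $(H^{\aut})_0=G\cdot U_0$ with commuting factors, which is the claimed decomposition. I expect the kernel computation of the previous paragraph to be the principal obstacle: one must verify that faithfulness of $\pi$ on the semisimple part forces $\ker\pi$ into $Z(\overline{G})$, so that the $\overline{\got g}$-factor is exactly $\overline{G}/\ker\pi$ and not some larger quotient. A secondary point to settle is the precise nature of the product: the overlap $G\cap U_0$ consists of the classes $(\Id,\pi(g))$ with $g\in Z(\overline{G})$ and $\pi(g)\in U_0$, a finite central subgroup, so in general $G\cdot U_0$ should be understood as an internal product of the two commuting subgroups.
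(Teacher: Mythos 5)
Your proof is correct, and its skeleton is the same as the paper's: both arguments rest on Theorem \ref{teo:16}, integrate the monomorphism $\varphi(Z)=(\ad(Z),\pi(Z))$ of \eqref{eq:monophi} over the simply connected group $\overline{G}$, and obtain the other factor as the identity component of the closed group $U$ whose Lie algebra is $\got u$. The differences are in execution, and two of them are worth recording. First, where you integrate $\varphi$ abstractly into $(H^{\aut})_0$ (valid because $H^{\aut}$ is closed in $\OO(\got g,\langle\cdot,\cdot\rangle_{\got g})\times\OO(\got v,\langle\cdot,\cdot\rangle_{\got v})$ with Lie algebra $\got h^{\aut}$), the paper introduces the auxiliary simply connected group $\tilde G=\overline{G}\times\R^n$ with Lie algebra $\got g$, integrates $\pi$ to a representation $\tilde\pi$ of $\tilde G$, and checks directly from \eqref{eq:H_aut} and the equivariance \eqref{eq:Adpi} that $(\Ad^{\tilde G}(g),\tilde\pi(g))\in H^{\aut}$; both routes are sound. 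Second, your kernel computation --- $\ker\pi$ is discrete because its Lie algebra vanishes by faithfulness of $\pi|_{\overline{\got g}}$, hence it is central, hence contained in $\ker\Ad=Z(\overline{G})$, so $\ker\Phi=\ker\pi$ --- supplies a step the paper passes over silently when it asserts that the image is isomorphic to $\overline{G}/\ker\tilde\pi$ (one small correction: $\ker\Ad=Z(\overline{G})$ holds for every connected Lie group, so semisimplicity is not needed at that point). Finally, your closing caveat is not pedantic; it flags a genuine imprecision in the statement. The intersection $G\cap U_0$ can be a nontrivial finite central subgroup, so the equality must be read as an almost-direct product of commuting subgroups, which is exactly what both your argument and the paper's prove. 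For instance, take the Lorentzian data set with $\overline{\got g}=\got{su}(2)$ acting standardly on $\got v=\C^2$ (Riemannian) and $\got c=\R$ acting by the complex structure, with negative definite metric on $\got c$ and minus the Killing form on $\overline{\got g}$: then $G\simeq SU(2)$, $U_0\simeq U(1)$, and $(\Id,-\Id)=\overline{\varphi}(-I)\in G\cap U_0$, so that $(H^{\aut})_0\simeq U(2)$ rather than the direct product $SU(2)\times U(1)$.
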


Our proof follows similar ideas as in \cite[Theorem 3.12]{lauret1999} and we include it to make the exposition self-contained.

\begin{proof}
  Let $\overline G$ be the simply connected Lie group whose Lie algebra is $\overline{\got g} $. Then $\overline G$ is compact and semisimple and if $\tilde G=\overline G\times \R^n$, where $n=\dim \got c$, then $\tilde G$ is the simply connected Lie group whose Lie algebra is $\got g$. 

  There exists a representation $\tilde \pi : \tilde G\to \OO(\got v)$ such that $d\tilde \pi_e=\pi$. Then for each $g\in \tilde G$ and each $Z \in \got g$, one has that 
  \begin{equation}\label{eq:Adpi}
    \pi(\Ad^{\tilde G}(g)(Z)) = \Ad^{\OO(\got v)}(\tilde\pi(g))(\pi(Z)) = \tilde \pi(g)\pi(Z)\tilde\pi(g)^{-1}.
  \end{equation}
  Since the metric $\langle\cdot,\cdot\rangle_{\got g}$ is $\ad$-invariant, then $\Ad^{\tilde G}(g)\in \OO(\got g,\langle\cdot,\cdot\rangle_{\got g})$. Then from equations \eqref{eq:H_aut} and \eqref{eq:Adpi} one gets that, in particular, 
  \begin{equation*}
      (\Ad^{\tilde G}(g),\tilde \pi(g))\in H^{\aut}
  \end{equation*}
  for each $g\in \overline G$. 

  Hence one has a well-defined homomorphism 
  \begin{equation*}
      \overline{\varphi}:\overline{G} \to H^{\operatorname{aut}}, \qquad g \mapsto (\Ad^{\tilde G}(g),\tilde{\pi}(g)).
  \end{equation*}
  Observe that $d\overline \varphi_e=\varphi$, where $\varphi:\overline{\got g}\to \got h^{\aut}$ is the monomorphism defined by \eqref{eq:monophi}. So $\ker(\overline\varphi)$ is a discrete subgroup of $\overline G$ (and hence a finite subgroup, since $\overline G$ is compact). Then $G=\overline{\varphi}(\overline G)$ is a compact connected subgroup of $H^{\aut}$, isomorphic to $\overline G/\ker(\tilde{\pi})$, whose Lie algebra is $\varphi(\overline {\got g})\simeq \overline{\got g}$. 

On the other hand, if $U= \text{End}_{\pi}(\got v)\cap \OO(\got v,\langle \cdot, \cdot \rangle_{\got v}) $, then the Lie algebra of $U$ is $\got u=\End_{\pi}(\got v)\cap \got{so}(\got v)$. It then follows from Theorem \ref{teo:16} that $H^{\aut}_0=G\times U_0$. 
\end{proof}

\section{The index of symmetry}\label{section5}

In this section we shall apply our results to study the distribution of symmetry of Lorentzian $2$-step nilpotent, naturally reductive Lie groups. We begin by introducing some basic definitions and properties that, to our knowledge, have only been established for Riemannian homogeneous spaces (cf.~ \cite{olmos2013}).

Let $M$ be a pseudo-Riemannian manifold. Recall that a vector field $\overline U \in \got X(M)$ is called a \emph{Killing vector field} if its flow $\{\varphi_t\}$ is given by local isometries. Equivalently, $\overline{U} \in \got{X}(M)$ is a Killing vector field if and only if for each $q\in M$ the map 
\begin{equation*}
    (\nabla \overline{U})_q:T_qM\to T_q M, \quad v\mapsto (\nabla_v \overline{U})_q
\end{equation*}
defines an element of $\got{so}(T_q M)$. Each Killing field $\overline{U} \in \mathcal K(M)$ is completely determined (if $M$ is connected) by its initial conditions $(\overline{U}_q, (\nabla \overline{U})_q)$ at any point $q\in M$. If $(\nabla \overline{U})_q=0$, then $X$ is called a \emph{transvection} at $q$. 

We denote by $\mathcal K(M)$ the Lie algebra of Killing fields of $M$ 
and by $\mathcal K_c(M)$ the Lie subalgebra of  complete Killing vector fields of  $M$. 
Then $\mathcal K_c(M)$ can be identified with $\got{iso}(M)$, the Lie algebra of $\Iso(M)$. More precisely, let $\exp:\got{iso}(M)\to \Iso(M)$ be the exponential map of the isometry group of $M$. Then the map $\Phi : \got{iso}(M) \to \mathcal K_c(M)$ defined by
\begin{equation}\label{eq:isokilling}
  \Phi(U)_q = \left.\frac{d}{dt}\right|_0\exp(tU)(q),
\end{equation}
is a Lie algebra anti-isomorphism, i.e., $\Phi([U,V])=-[\Phi(U), \Phi(V)]$ for every $U,V \in \got{iso}(M)$.  Observe that for each $U\in \got{iso}(M)$, then the flow $\{\varphi_t\}$ of $\Phi(U)$ is given by 
\begin{equation}\label{eq:flowx}
    \varphi_t(q)=\exp(tU)(q). 
\end{equation}
From now on we will denote $\tilde U:=\Phi(U)$ (observe that any complete Killing field of $M$ is $\tilde U$ for some $U\in \got{iso}(M)$).

Denote by $\got{iso}(M)_q$ the Lie algebra of the isotropy group $\Iso(M)_q$ of $\Iso(M)$ at $q$. Observe that $U\in \got{iso}(M)_q$ if and only if $\tilde U_q=0$. The transvections at a point $q\in M$ form a subspace $\tilde{\got p}^q$ of $\mathcal K_c(M)$ called the \emph{Cartan subspace} at $q$. Namely, 
\begin{equation*}
    \tilde{\got p}^q \coloneqq \{ \tilde U \in \mathcal K_c(M): (\nabla \tilde U)_q = 0 \}.
\end{equation*}
Observe that if $\tilde U,\tilde V\in \got p^q$, then $[\tilde U,\tilde V]_q = (\nabla_{\tilde U} \tilde V)_q - (\nabla_{\tilde V}\tilde U)_q = 0$. So $[\tilde U,\tilde V]\in \Phi(\got{iso}(M)_q)$. The \emph{symmetric isotropy algebra} $\tilde{\got h}^q$ at $q$ is defined by
\begin{equation*}
    \tilde{\got h}^q \coloneqq \text{span}_{\R}\{ [\tilde U,\tilde V]: \tilde U, \tilde V \in \got p^q \} = [\tilde{\got p}^q, \tilde{\got p}^q]\subset \Phi(\got{iso}(M)_q).
\end{equation*}
Then one has the direct sum (of vector spaces) 
\begin{equation}\label{eq:gq}
    \tilde{\got g}^q := \tilde{\got h}^q \oplus \tilde{\got p}^q.
\end{equation}
It is standard to prove that $[\tilde{\got h}^q, \tilde{\got h}^q] \subset \tilde{\got h}^q$ and $[\tilde{\got p}^q, \tilde{\got h}^q] \subset \tilde{\got h}^q$, so the vector space $\tilde{\got g}^q$ is a Lie subalgebra of $\Phi(\got{iso}(M))$. Denote by $\got p^q:=\Phi^{-1}(\tilde{\got p}^q)$, $\got h^q:=\Phi^{-1}(\tilde{\got h}^q)$ and $\got g^q:=\Phi^{-1}(\tilde{\got g}^q)$. Then $\got h^q$ is a Lie subalgebra of $\got{iso}_q(M)$, and $\got g^{q}=\got h^q\oplus \got p^q$ is a Lie subalgebra of $\got{iso}(M)$. 

\begin{observation}\label{rem:gx}
  The Lie algebras $\got g^q$ and $\tilde{\got g}^q$ depends on $q$. Now let $f\in \Iso(M)$ and let $x=f(q)$. Then we have that $f_*(\tilde U)\in \mathcal K_c(M)$ for every $\tilde U\in \mathcal K_c(M)$, and that $\tilde U\in \tilde{\got{p}}^q$ if and only if $f_*(\tilde U)\in \tilde{\got{p}}^x$. So, if $\tilde W=[\tilde U,\tilde V]\in \tilde{\got h}^q$ with $\tilde U,\tilde V\in \tilde{\got p}^q$, it follows that $f_*(\tilde W)=[f_*(\tilde U),f_*(\tilde V)]\in [\tilde{\got p}^x,\tilde{\got p}^x]=\tilde{\got h}^x$. Hence $\tilde{\got g}^x=f_*(\tilde{\got{g}}^q)$. 
  
  Let $G^q$ is the connected subgroup of $\Iso(M)$ whose Lie algebra is $\got g^q$, then if $x=f(q)$ for some $f\in \Iso(M)$ one gets that  $G^x=fG^qf^{-1}$. In particular,  if  $x\in G^q\cdot q$ then $G^x=G^q$. 
\end{observation}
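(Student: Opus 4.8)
The plan is to reduce the statement to two naturality properties of isometries and then transport the conclusion from Killing fields to subgroups through the map $\Phi$. First I would note that if $f\in\Iso(M)$ and $\tilde U\in\mathcal K_c(M)$ has flow $\{\varphi_t\}$, then $f_*\tilde U$ has flow $\{f\circ\varphi_t\circ f^{-1}\}$; since each $f\circ\varphi_t\circ f^{-1}$ is a globally defined isometry, $f_*\tilde U$ is again a complete Killing field, so $f_*$ preserves $\mathcal K_c(M)$. Then, using that an isometry preserves the Levi-Civita connection, $\nabla_{f_*Y}(f_*Z)=f_*(\nabla_Y Z)$, I would derive the pointwise transformation rule $(\nabla(f_*\tilde U))_x=df_q\circ(\nabla\tilde U)_q\circ(df_q)^{-1}$ with $x=f(q)$. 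In particular $(\nabla\tilde U)_q=0$ forces $(\nabla(f_*\tilde U))_x=0$, so $f_*\tilde U\in\tilde{\got p}^x$; applying the same argument to $f^{-1}$ yields the equivalence $\tilde U\in\tilde{\got p}^q\iff f_*\tilde U\in\tilde{\got p}^x$, and hence the equality $f_*(\tilde{\got p}^q)=\tilde{\got p}^x$.

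Next, since push-forward by a diffeomorphism is a homomorphism for the Lie bracket of vector fields, $f_*[\tilde U,\tilde V]=[f_*\tilde U,f_*\tilde V]$. Combined with $f_*(\tilde{\got p}^q)=\tilde{\got p}^x$, this gives $f_*(\tilde{\got h}^q)=f_*[\tilde{\got p}^q,\tilde{\got p}^q]=[\tilde{\got p}^x,\tilde{\got p}^x]=\tilde{\got h}^x$, and therefore $f_*(\tilde{\got g}^q)=\tilde{\got h}^x\oplus\tilde{\got p}^x=\tilde{\got g}^x$, which is the first assertion.

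To obtain the subgroup statement I would establish the compatibility of $f_*$ with the adjoint representation of $\Iso(M)$ under $\Phi$. By \eqref{eq:flowx} the flow of $\Phi(U)$ is $t\mapsto\exp(tU)$, so by the previous flow computation the flow of $f_*\Phi(U)$ is $t\mapsto f\exp(tU)f^{-1}=\exp(t\,\Ad(f)U)$; hence $f_*\Phi(U)=\Phi(\Ad(f)U)$, that is, $f_*\circ\Phi=\Phi\circ\Ad(f)$. Applying $\Phi^{-1}$ to $\tilde{\got g}^x=f_*(\tilde{\got g}^q)$ then gives $\got g^x=\Ad(f)(\got g^q)$. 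Since $G^q$ is connected with Lie algebra $\got g^q$, the conjugate $fG^qf^{-1}$ is connected with Lie algebra $\Ad(f)(\got g^q)=\got g^x$, so $G^x=fG^qf^{-1}$. Finally, if $x\in G^q\cdot q$, choosing $g\in G^q$ with $x=g(q)$ gives $G^x=gG^qg^{-1}=G^q$, because $g$ lies in the group $G^q$.

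The step requiring genuine care is the identity $f_*\circ\Phi=\Phi\circ\Ad(f)$: it hinges on the fact that the flow of $f_*\tilde U$ is the conjugated flow $f\varphi_t f^{-1}$ together with the group-level relation $f\exp(tU)f^{-1}=\exp(t\,\Ad(f)U)$, and one must keep in mind that $\Phi$ is an \emph{anti}-isomorphism, so that only its linearity (and not its bracket behaviour) is invoked at this point. Once this correspondence between $f_*$ on $\mathcal K_c(M)$ and $\Ad(f)$ on $\got{iso}(M)$ is available, the passage to connected subgroups is automatic, and everything else is a routine application of the isometry-invariance of $\nabla$ and the naturality of the bracket of vector fields under push-forward.
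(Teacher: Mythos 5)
Your proposal is correct and follows essentially the same route as the paper's own (sketched) argument: conjugation of flows to see that $f_*$ preserves $\mathcal K_c(M)$, naturality of $\nabla$ under isometries to get the transvection equivalence, bracket-naturality of push-forward for $\tilde{\got h}^q$, and the intertwining relation $f_*\circ\Phi=\Phi\circ\Ad(f)$ to pass to the group level. The paper leaves these verifications implicit in the remark, and your write-up simply supplies the standard details, including the correct observation that only the linearity of the anti-isomorphism $\Phi$ is needed when transporting $\tilde{\got g}^q$ to $\got g^q$.
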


\begin{lemma}\label{lemma:paralleltransport}
  Let $M$ be a pseudo-Riemannian manifold and let $\tilde{U} \in \mathcal K_c(M)$ with flow $\{\varphi_t\}$. Let $q\in M$ and $c(t) = \varphi_t(q)$. Denote by $\tau_t : T_qM \to T_{c(t)}M$ the parallel displacement along $c(t)$. Then:
 \begin{enumerate}
     \item\label{item1lemma} $\tau_t=(d\varphi_t)_q \circ e^{-t(\nabla \tilde{U})_q}$, where $e:\got{so}(T_qM)\to \OO(T_qM)$ is the usual exponential map;
     \item\label{item2lemma} if $\tilde{U}\in \got{p}^q$ then $c(t)$ is a geodesic of $M$. 
 \end{enumerate}
\end{lemma}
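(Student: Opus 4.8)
The plan is to prove both parts by relating the one-parameter group $\exp(tU) \in \Iso(M)$ to the covariant derivative $(\nabla \tilde U)_q$ at the base point. The unifying idea is that both the differential $(d\varphi_t)_q$ and the parallel displacement $\tau_t$ are curves in the frame bundle over $c(t)$, so it suffices to identify the linear map $\tau_t^{-1} \circ (d\varphi_t)_q : T_qM \to T_qM$ and show it equals $e^{t(\nabla \tilde U)_q}$.

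For part \eqref{item1lemma}, first I would define $S_t := \tau_t^{-1} \circ (d\varphi_t)_q \in \OO(T_qM)$ and show it satisfies a linear ODE. Since $\{\varphi_t\}$ is the flow of the Killing field $\tilde U$, differentiating $(d\varphi_t)_q$ along the curve $c(t)=\varphi_t(q)$ and using the defining property of a Killing field produces the covariant derivative of $\tilde U$ along $c$. The key computation is that for a Killing field, $\frac{d}{dt}(d\varphi_t)_q v$, parallel-transported back to $q$, evolves according to $(\nabla \tilde U)_{c(t)}$ pulled back by parallel transport. Because $\tilde U$ is Killing and $\{\varphi_t\}$ is its flow, $(\nabla \tilde U)_{c(t)}$ is itself the parallel transport of $(\nabla \tilde U)_q$ (this is where the Killing equation $\nabla^2 \tilde U = R(\cdot,\tilde U)\cdot$ enters, guaranteeing that $(\nabla\tilde U)$ is $\varphi_t$-related to itself). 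This yields $\dot S_t = -(\nabla \tilde U)_q \circ S_t$ with $S_0 = \Id$, whose solution is $S_t = e^{-t(\nabla \tilde U)_q}$; rearranging gives $\tau_t = (d\varphi_t)_q \circ e^{-t(\nabla\tilde U)_q}$.

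Part \eqref{item2lemma} then follows quickly. If $\tilde U \in \got p^q$ then $(\nabla \tilde U)_q = 0$ by definition of the Cartan subspace, so part \eqref{item1lemma} collapses to $\tau_t = (d\varphi_t)_q$. The tangent vector to $c(t)=\varphi_t(q)$ is $\dot c(t) = \tilde U_{c(t)} = (d\varphi_t)_q(\tilde U_q)$, using that $\tilde U$ is invariant under its own flow. Hence $\dot c(t) = \tau_t(\tilde U_q) = \tau_t(\dot c(0))$, which says precisely that $\dot c(t)$ is the parallel transport of $\dot c(0)$ along $c$, i.e. $c$ is a geodesic.

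The main obstacle I expect is rigorously justifying the claim used in part \eqref{item1lemma} that $(\nabla \tilde U)$ is carried to itself by the flow, equivalently that $S_t$ satisfies an autonomous (constant-coefficient) ODE rather than one with $t$-dependent coefficients. This rests on the facts that $\varphi_t$ is an isometry (so it commutes with $\nabla$ and with parallel transport) and that $\tilde U$ is $\varphi_t$-invariant; making this precise requires differentiating $\tau_s \circ (\nabla \tilde U)_q \circ \tau_s^{-1} = (\nabla \tilde U)_{c(s)}$ carefully and invoking that $\varphi_t$ maps the integral curve $c$ to a time-translate of itself. Once this invariance is established, the reduction to the matrix exponential is routine.
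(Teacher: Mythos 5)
Your proposal is correct in substance and, unlike the paper's proof, is self-contained. The paper simply cites \cite[Remark~2.3]{olmos95} for the formula $\tau_t = (d\varphi_t)_q \circ e^{-tA_{\tilde U}}$, where $A_{\tilde U}(v) = \frac{D}{dt}\big|_0 (d\varphi_t)_q(v)$, and then performs a single computation: swapping mixed covariant derivatives of the variation $(s,t)\mapsto\varphi_t(\alpha(s))$ (torsion-freeness) to identify $A_{\tilde U}(v) = (\nabla_v\tilde U)_q$. That swap is exactly the computation sitting inside your ODE argument, namely $\frac{D}{dt}V_t = (\nabla\tilde U)_{c(t)}V_t$ for $V_t = (d\varphi_t)_q(v)$. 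What your route adds is the step the citation hides: the affine Killing identity $\nabla_Y(\nabla\tilde U) = R(Y,\tilde U)$, which along $c$ (where $Y = c' = \tilde U_{c(t)}$) gives $\frac{D}{dt}(\nabla\tilde U)_{c(t)} = R(\tilde U,\tilde U) = 0$, i.e.\ $(\nabla\tilde U)_{c(t)} = \tau_t\circ(\nabla\tilde U)_q\circ\tau_t^{-1}$; this parallelism is precisely what makes your ODE autonomous, and you correctly single it out as the crux. So your argument is essentially a full proof of the remark that the paper outsources to the literature, which buys transparency at the cost of length; your part \eqref{item2lemma} coincides with the paper's. One sign slip to repair: with $S_t = \tau_t^{-1}\circ(d\varphi_t)_q$, the computation above yields $\dot S_t = +(\nabla\tilde U)_q\circ S_t$, hence $S_t = e^{t(\nabla\tilde U)_q}$, which rearranges to $\tau_t = (d\varphi_t)_q\circ e^{-t(\nabla\tilde U)_q}$. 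You stated the ODE and its solution with the opposite sign and then also flipped the sign in the rearrangement, the two errors cancelling in the final formula; this is cosmetic, not a gap, but should be fixed in a write-up.
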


\begin{proof}
  From \cite[Remark 2.3]{olmos95}, one has that 
  \begin{equation*}
      \tau_t = (d\varphi_t)_q\circ e ^{-A_{\tilde U}},
  \end{equation*}
  where for $v\in T_qM$ and $V_t = (d\varphi_t)_q(v)$, $A_{\tilde U}(v)=\frac{D}{dt}\big|_0 V_t$ (here $\frac{D}{dt}$ represents the covariant derivative along $c(t)$). Let $\alpha(s)$ be a curve in $M$ such that $\alpha(0) = q$ and $\alpha'(0) = v$. Then 
  \begin{equation*}
      A_{\tilde{U}} (v) = \frac{D}{dt}\Big|_0 V_t = \frac{D}{dt}\Big|_0 \frac{\partial}{\partial s}\Big|_0\varphi_t(\alpha(s)) = \frac{D}{ds}\Big|_0\frac{\partial}{\partial t}\Big|_0 \varphi_t(\alpha(s)) = \frac{D}{ds}\Big|_0 \tilde{U}_{\alpha(s)} = (\nabla_v \tilde{U})_q
  \end{equation*}
  and item \ref{item1lemma} follows. Observe that $c'(t) = (d\varphi_t)_q(c'(0))$. Hence if $(\nabla \tilde{U})_p = 0$, from item \ref{item1lemma} we have that $c'(t) = \tau_t(c'(0))$ and so $c(t)$ is a geodesic. 
\end{proof}

\begin{theorem}\label{teo:locsym}
  Let $M$ be a pseudo-Riemannian manifold, $q\in M$ and let $G^q$ be the connected Lie subgroup of $\Iso(M)$ whose Lie algebra is $\got g^q$ defined by \eqref{eq:gq}. Let
  \begin{equation*}
      L(q)=G^q\cdot q
  \end{equation*}
  be the orbit of $q$ by the action of $G^q$. If $L(q)$ is a pseudo-Riemannian submanifold of $M$, then it is a geodesically complete, (homogeneous) totally geodesic, locally symmetric submanifold of $M$ and $T_xL(q)=\{U_x\,:\,U\in\got{p}^x\}$ for each $x\in L(q)$. 
\end{theorem}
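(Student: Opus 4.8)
The plan is to establish four properties of $L(q)$ in sequence: geodesic completeness, total geodesy, homogeneity, and local symmetry, while simultaneously identifying the tangent space $T_xL(q)$ with the evaluation of the Cartan subspace $\got p^x$.

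First I would identify the tangent space. By definition, $T_qL(q)$ is the image of $\got g^q$ under the evaluation map $U\mapsto \tilde U_q$. Since every $\tilde W\in\tilde{\got h}^q$ satisfies $\tilde W_q=0$ (because $\tilde{\got h}^q\subset\Phi(\got{iso}(M)_q)$), we have $T_qL(q)=\{\tilde U_q:\tilde U\in\tilde{\got p}^q\}=\{U_q:U\in\got p^q\}$. For an arbitrary $x\in L(q)$, writing $x=f(q)$ with $f\in G^q$, Remark \ref{rem:gx} gives $\got g^x=\got g^q$ and the pushforward correspondence $f_*(\tilde{\got p}^q)=\tilde{\got p}^x$, so the same identification $T_xL(q)=\{U_x:U\in\got p^x\}$ holds at every point. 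This also yields homogeneity: $G^q$ acts transitively on $L(q)=G^q\cdot q$ by construction, and since $G^x=G^q$ for $x\in L(q)$, this is a genuine homogeneous space.

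Next I would prove geodesic completeness together with total geodesy, which is the geometric heart of the argument and relies on Lemma \ref{lemma:paralleltransport}. Given $x\in L(q)$ and a tangent vector $v\in T_xL(q)$, the identification above provides $\tilde U\in\tilde{\got p}^x$ with $\tilde U_x=v$ and $(\nabla\tilde U)_x=0$. By item \eqref{item2lemma} of Lemma \ref{lemma:paralleltransport}, the integral curve $c(t)=\varphi_t(x)$ of the complete field $\tilde U$ is a geodesic of $M$; it is defined for all $t\in\R$ since $\tilde U$ is complete, and it stays in $L(q)$ because $\varphi_t\in G^x=G^q$. This single observation simultaneously shows that every geodesic of $M$ starting tangent to $L(q)$ remains in $L(q)$ (so $L(q)$ is totally geodesic, after checking that geodesics of $M$ and of the submanifold agree, which follows since the second fundamental form vanishes on the directions swept out by transvections) and that $L(q)$ is geodesically complete. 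I expect the main obstacle here to be the total geodesy: one must verify that the second fundamental form of $L(q)$ vanishes, i.e.\ that $\nabla_v w$ is tangent to $L(q)$ for tangent fields $v,w$. This I would extract from Lemma \ref{lemma:paralleltransport}\eqref{item1lemma}, which shows that parallel transport $\tau_t$ along the transvection geodesic $c(t)=\varphi_t(x)$ coincides with $(d\varphi_t)_x$ when $(\nabla\tilde U)_x=0$; since each $\varphi_t\in G^q$ maps $L(q)$ to itself, $(d\varphi_t)_x$ preserves the tangent distribution of $L(q)$, so parallel transport along these geodesics preserves $TL(q)$, forcing the second fundamental form to vanish.

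Finally I would establish local symmetry. The standard mechanism is that the transvections in $\tilde{\got p}^x$ generate, via their flows, the geodesic symmetry at $x$: for $\tilde U\in\tilde{\got p}^x$ the map $\varphi_t$ acts on the geodesic $c(s)=\varphi_s(x)$ by translation, and composing such flows one realizes local geodesic symmetries of $L(q)$ as restrictions of isometries of $M$ lying in $G^q$. Concretely, since parallel transport along transvection geodesics equals the differential of an isometry in $G^q$ (Lemma \ref{lemma:paralleltransport}\eqref{item1lemma} with the vanishing covariant derivative), the curvature tensor of the induced metric on $L(q)$ is invariant under these transports; because $L(q)$ is totally geodesic its intrinsic curvature is the restriction of $R$, and invariance of $R$ under parallel transport along all geodesics is precisely the local symmetry condition $\nabla^{L(q)} R^{L(q)}=0$. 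I would phrase the completeness of $L(q)$ as an intrinsic homogeneous pseudo-Riemannian manifold and invoke that a complete, simply connected, locally symmetric space is symmetric only if needed; for the stated conclusion, local symmetry via $\nabla R=0$ suffices and follows from the parallel-transport invariance just described.
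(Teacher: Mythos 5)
Your proposal is correct. Up to and including total geodesy it follows the paper's own route: the tangent space identification via the decomposition $\tilde{\got g}^x=\tilde{\got h}^x\oplus\tilde{\got p}^x$ together with $\tilde V_x=0$ for $\tilde V\in\tilde{\got h}^x$, the use of Remark \ref{rem:gx} to get $G^x=G^q$, and Lemma \ref{lemma:paralleltransport}\eqref{item2lemma} to produce, through every point and in every tangent direction, a complete $M$-geodesic that remains in the orbit; the paper concludes total geodesy directly from this standard fact (polarization of $II(v,v)=0$ along such geodesics), while you add an equivalent justification via parallel transport preserving $TL(q)$. The genuine divergence is the local symmetry step. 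The paper argues infinitesimally: each $\tilde W\in\tilde{\got p}^x$ restricts to a Killing field of $L(q)$, so $\mathcal L_{\tilde W}\overline R=0$; expanding this Lie derivative against transvections $\tilde U_1,\tilde U_2,\tilde U_3\in\tilde{\got p}^x$ and evaluating at $x$, where $(\overline\nabla\tilde U_i)_x=0$, all correction terms drop out and $(\overline\nabla_{\tilde W}\overline R)_x=0$ follows. You instead invoke Lemma \ref{lemma:paralleltransport}\eqref{item1lemma}: for a transvection, parallel transport along $c(t)=\varphi_t(x)$ equals $(d\varphi_t)_x$, the differential of an isometry of $M$, so $R$ is invariant under parallel transport along every geodesic of $L(q)$; differentiating at $t=0$ gives $(\nabla_v R)_x=0$ for all $v\in T_xL(q)$, which passes to $(\overline\nabla\,\overline R)_x=0$ because $II=0$ makes $\overline R$ and $\overline\nabla$ the restrictions of $R$ and $\nabla$. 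Both mechanisms are sound. Yours is the classical Cartan-style argument (holonomy along geodesics realized by isometries) and puts item \eqref{item1lemma} of the lemma to work, whereas the paper's proof of the theorem only ever uses item \eqref{item2lemma}; the paper's Lie-derivative computation, on the other hand, stays entirely intrinsic to $L(q)$ at the single point $x$ and does not need the (easy, but worth stating) identification of $\overline\nabla\,\overline R$ with the restriction of $\nabla R$ on a totally geodesic submanifold, which your route quietly relies on.
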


\begin{proof}
  $L(q)$ is clearly homogeneous. Let $x \in L(q)$ and let $u \in T_xL(q)$. It follows from Remark \ref{rem:gx} that $G^q \cdot q = G^x \cdot x$. So there exists $\tilde W \in \tilde{\got g}^x$ such that $\tilde W_x=u$. Decompose $\tilde W=\tilde V+\tilde U$ with $\tilde V\in \tilde{\got h}^x$ and $\tilde U\in \tilde{\got p}^x$. Since $\tilde V_x=0$, we conclude that there exists $\tilde U\in \got p^x$ such that $\tilde U_x=u$. From Lemma \ref{lemma:paralleltransport}, the curve $c(t)=\varphi_t(x)$ is a geodesic such that $c(0)=x$ and $c'(0)=u$, where $\{\varphi_t\}$ is the flow of $\tilde U$. From \eqref{eq:flowx}, $c(t)\in G^x\cdot x=L(q)$ for each $t$. So $L(q)$ is totally geodesic. Since $\tilde U\in \tilde{\got p}^x$ is complete, it follows that $L(q)$ is geodesically complete. 

  Let now $\overline\nabla$ and $\overline R$ be the Levi-Civita connection and the curvature tensor of $L(q)$, respectively. Since $L(q)$ is totally geodesic, $\overline \nabla = \nabla\big|_{TL(q)^2}$ and  for each $x \in L(q)$, $\overline R_x = R_x\big|_{T_xL(q)^3}$, where $R$ is the curvature tensor of $M$. 

  Let $w,u_1,u_2,u_3\in T_xL(q)$ and let $\tilde W,\tilde U_1,\tilde U_2,\tilde U_3\in \tilde{\got p}^x$ such that $\tilde W(x)=w$, $\tilde U_i(x)=u_i$ for $i=1,2,3$. Observe that the flow of the Killing field $\tilde W$ preserves $L(q)$, and so it is a Killing field of $L(q)$. Then $\mathcal L_{\tilde W}\overline R=\mathcal L_{\tilde W}R=0$. Now
  \begin{equation*}
    0=(\mathcal L_{\tilde W} \overline{R})(\tilde U_1,\tilde U_2,\tilde U_3) = (\overline{\nabla}_{\tilde W} \overline{R})(\tilde U_1,\tilde U_2,\tilde U_3) - \overline{R}(\overline{\nabla}_{\tilde W}\tilde U_1,\tilde U_2)\tilde U_3 - \overline{R}(\tilde U_1,\overline{\nabla}_{\tilde W}\tilde U_2)\tilde U_3-\overline{R}(\tilde U_1,\tilde U_2)\overline\nabla_{\tilde W}\tilde U_3.
  \end{equation*}
  Since $\tilde U_i\in \tilde{\got p}^x$, evaluating at $x$ we have 
  \begin{equation*}
    0= (\overline{\nabla}_{\tilde W} \overline{R})_x(\tilde U_1,\tilde U_2,\tilde U_3).
  \end{equation*}
  Therefore, $({\overline\nabla} \,{\overline R})_x=0$ and so $L(q)$ is a locally symmetric space.
\end{proof}

\begin{definition}
  Let $M$ be a pseudo-Riemannian manifold and let $q\in M$. The subspace
  \begin{equation}
    \got s_q = \{ \tilde U_q : \tilde U\in \tilde{\got p}^q \} =\tilde{ \got g}^q  \cdot q \subset T_qM,
  \end{equation}
  is called the \emph{symmetry subspace} of $M$ at $q$. The dimension $i_{\got s}(q)=\dim(\got s_q)$ is called the \emph{index of symmetry} of $M$ at $q$. If $q \mapsto i_{\got s}(q)$ is constant on $M$ we call this number the \emph{index of symmetry of $M$} and we denote it by $i_{\got s}(M)$. 
\end{definition}

\begin{observation}
  If $M$ is a Riemannian manifold then $L(q)$ is a symmetric space for each $q\in M$. Hence $M$ is a symmetric space if only if $i_{\got s}(q)=\dim M=i_{\got s}(M)$ for each $q\in M$ (cf.~ \cite{olmos2013}). Informally, the index of symmetry tells us how far is a Riemannian manifold from a symmetric space. If $M$ is pseudo-Riemannian, from Theorem~\ref{teo:locsym} we have that if $i_{\got s}(M)=\dim M$ then  $M$ is a locally symmetric pseudo-Riemannian space.
\end{observation}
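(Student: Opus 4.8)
The plan is to treat the pseudo-Riemannian assertion as the substantive new content, deriving it directly from Theorem~\ref{teo:locsym}, and to regard the two Riemannian statements as consequences of \cite{olmos2013} combined with the same theorem.

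First I would dispose of the pseudo-Riemannian claim. Suppose $i_{\got s}(M)=\dim M$. Since $\got s_q\subseteq T_qM$ and $\dim\got s_q=i_{\got s}(q)=\dim M=\dim T_qM$ for every $q\in M$, this forces $\got s_q=T_qM$; in particular $\got s_q$ is non-degenerate, being all of $T_qM$. Recalling from the definition that $\got s_q=\tilde{\got g}^q\cdot q$ is exactly the tangent space at $q$ to the orbit $L(q)=G^q\cdot q$, I would conclude $T_qL(q)=T_qM$, so that $L(q)$ is an open subset of $M$ and hence, carrying the ambient (full, non-degenerate) metric, a pseudo-Riemannian submanifold. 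Theorem~\ref{teo:locsym} then applies and gives that $L(q)$ is locally symmetric, i.e.\ $\nabla R$ vanishes on $L(q)$. As $q$ ranges over $M$ the open sets $L(q)$ cover $M$, so $\nabla R\equiv 0$ and $M$ is locally symmetric, as claimed.

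For the Riemannian statements I would argue that, since a positive-definite metric restricts to a non-degenerate form on every $\got s_q$, Theorem~\ref{teo:locsym} applies for all $q$ with no extra hypothesis: each leaf $L(q)$ is a complete, homogeneous, totally geodesic, locally symmetric submanifold. The step promoting this to ``$L(q)$ is a globally symmetric space'' is where the Riemannian hypothesis is really used --- the transvections in $\tilde{\got p}^q$ integrate to geodesic symmetries that are global isometries precisely because $L(q)$ is complete and homogeneous --- and I would cite \cite{olmos2013} for it rather than reprove it. Granting that, the equivalence follows formally: $M$ is symmetric iff its geodesic symmetries are global isometries, iff $L(q)=M$ for every $q$, iff $\got s_q=T_qM$, iff $i_{\got s}(q)=\dim M=i_{\got s}(M)$ for all $q$.

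The genuinely routine points are the dimension count and the openness of $L(q)$ when $\got s_q=T_qM$. The one real difficulty is the global-symmetry upgrade of the leaves in the Riemannian setting, and I would deliberately offload it to \cite{olmos2013}; notably it does not reappear in the pseudo-Riemannian assertion, where only local symmetry is claimed, so I expect no obstacle there beyond invoking Theorem~\ref{teo:locsym}.
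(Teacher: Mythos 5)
Your proposal is correct and follows essentially the same route as the paper, which presents this as a remark: the Riemannian assertions are delegated to \cite{olmos2013} (where the leaves are shown to be globally symmetric), and the pseudo-Riemannian assertion is derived from Theorem~\ref{teo:locsym} exactly as you do, by noting that $\got s_q=T_qM$ makes each orbit $L(q)$ an open (hence automatically non-degenerate) pseudo-Riemannian submanifold on which $\nabla R$ vanishes, and these open sets cover $M$. Your filling-in of the routine steps (the identification $T_qL(q)=\got s_q$, openness, and the covering argument) is exactly the intended reading of the remark.
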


Suppose now that $M$ is a pseudo-Riemannian $G$-homogeneous manifold, i.e., there exist a subgroup $G$ of $\operatorname{Iso}(M)$ that acts transitively on $M$. From Remark \ref{rem:gx} one has that if $y=f(x)$ for an isometry $f\in G$, then $\tilde{\got p}^y=f_*(\tilde{\got p}^x)$ and so $\got s_y=f_*(\got s_x)$. In particular, $i_{\got s}(x)=i_{\got s}(y)$. So $i_{\got s}(M)$ is well defined and the assignment 
\begin{equation*}
    \got s:q\mapsto \got s_q
\end{equation*}
defines a $G$-invariant (hence $C^{\infty}$) distribution on $M$, called the \emph{distribution of symmetry} of $M$. Observe that if $\got{s}_x$ is non-degenerate for some $x\in M$, then $\got s$ is a non-degenerate distribution on $M$ and from Theorem \ref{teo:locsym}, $\got s$ is integrable and its leaves $L(q)$  are geodesically complete, homogeneous, totally geodesic, locally symmetric submanifolds of $M$. 

\section{The distribution of symmetry of a $2$-step nilpotent Lorentzian naturally reductive Lie group}\label{section6}

For a Riemannian simply connected, irreducible, compact normal homogeneous space $M = G / H$, which is not a symmetric space, the distribution of symmetry coincides with the distribution of fixed points of the (connected) isotropy representation \cite{olmos2013}. This was also proved for Riemannian naturally reductive nilpotent Lie groups \cite{reggiani2019}. In this section, we prove a similar result for Lorentzian $2$-step nilpotent Lie groups.

Let $M$ be a $G$-homogeneous manifold with $G \subset \Iso(M)$. For each $q \in M$, let $H^q = G_q$ be the isotropy subgroup at $q$. The \emph{isotropy representation} at of $M$ at $q$ is the faithful representation 
\begin{equation*}
    \rho^q: H^q\to \OO(T_q M), \qquad h\mapsto \rho^q(h)=dh_q.
\end{equation*}

Assume now that $M$ is simply connected and let $G_0$ be the connected component of the identity of $G$. Then $M$ is a $G_0$-homogeneous manifold and the isotropy $H^q_0:=(G_0)_q$ is connected. The \emph{connected isotropy representation}  is the representation 
\begin{equation*}
    \rho^q_0 = \rho^q|_{H^q_0} : H^q_0 \to \SO(T_qM).
\end{equation*}
From Remark \ref{rem:gx} one has that  $\rho^q(h)(\got s_q)=\got s_q$ for each $h\in H^q$. Let $\mathcal F^q$ (resp. $\mathcal F^q_0$) be the subspace of $T_qM$ given by the fixed points of $\rho^q$ (resp. $\rho^q_0$), i.e.,
\begin{align*}
  \mathcal F^q & =\{v \in T_qM : \rho^q(h)(v) = v, \text{ for all } h \in H^q\}, \\
  \mathcal F_0^q & =\{v \in T_qM : \rho_0^q(h)(v) = v, \text{ for all } h \in H^q_0\}.
\end{align*}
One can see that the assignment $q\mapsto \mathcal F^q$ (resp.\ $q\mapsto \mathcal F^q_0$) is a $C^{\infty}$ $G$-invariant (resp.\ $G_0$-invariant) distribution. 

\begin{theorem}\label{teo:maintheorem}
  Let $N$ be a simply connected $2$-step nilpotent Lie group endowed with a left-invariant Lorentzian metric, with non-degenerate center. Assume that the metric is naturally reductive with respect to the full isometry group and that the full isotropy $H$ satisfies $H = H^{\operatorname{aut}}$. Assume further that the representation $j$ defined in \eqref{eq:def_map_j} is injective.  Then the distribution of symmetry of $N$ is non-degenerate and coincides with the $\Iso_0(N)$-invariant distribution $\mathcal F_0$ determined by the fixed vectors of the connected isotropy representation of $N$. 
\end{theorem}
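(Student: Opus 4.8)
The plan is to reduce everything to the tangent space at the identity $e$ and to prove that both distributions coincide there with the center $\got c$ of $\got g$. Since we assume $H=H^{\aut}$, we have $\Iso(N)=\Iso^{\aut}(N)$, so $\got{iso}(N)=\got n\rtimes\got h^{\aut}$ with $\got h^{\aut}=\overline{\got g}\oplus\got u$ as in Theorem \ref{teo:16}, and $\Iso_0(N)$ acts transitively on $N$ (it contains all left translations). Because both $\got s$ and $\mathcal F_0$ are $\Iso_0(N)$-invariant distributions (Section \ref{section5}), it suffices to show $\got s_e=\mathcal F_0^e$ together with non-degeneracy at $e$; invariance under the transitive group then propagates the equality and the non-degeneracy to all of $N$.

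First I would compute $\mathcal F_0^e$. The connected isotropy $(H^{\aut})_0=G\times U_0$ (Corollary \ref{cor:H0}) is connected, so $\mathcal F_0^e$ is the set of $Z_0+X_0\in\got g\oplus\got v$ annihilated by the Lie algebra action of Theorem \ref{teo:16}, namely $[\overline{\got g},Z_0]=0$, $\pi(Z)X_0=0$ for all $Z\in\overline{\got g}$, and $BX_0=0$ for all $B\in\got u$. The first condition forces $Z_0\in\got c$, since the center of the semisimple $\overline{\got g}$ is trivial. The key structural observation is that $\pi(\got c)\subset\got u$: for $Z\in\got c$ the endomorphism $\pi(Z)$ is skew-symmetric and commutes with every $\pi(Z')$. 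Hence $X_0$ must be killed by $\pi(\got c)$ as well as by $\pi(\overline{\got g})$, i.e. $X_0\in\bigcap_{Z\in\got g}\ker\pi(Z)=\{0\}$. Therefore $\mathcal F_0^e=\got c$.

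The heart of the argument is the computation of $\got s_e$. Writing $U^{*}$ for the Killing field of the one-parameter group of left translations $\exp(tU)$, I would first establish
\begin{equation*}
  (\nabla_V U^{*})_e=[U,V]_{\got n}+\nabla_V U,\qquad U,V\in\got n,
\end{equation*}
obtained by left-trivializing the right-invariant field $U^{*}$ (whose trivialization is $g\mapsto\Ad(g^{-1})U$) and using \eqref{eq:nabla_in_2stepnilpo}. Since the Killing field $A^{*}$ of an automorphism generated by $A\in\got h^{\aut}$ vanishes at $e$ with $(\nabla A^{*})_e=A$ (apply Lemma \ref{lemma:paralleltransport}\eqref{item1lemma} at the fixed point $e$), a complete Killing field is a transvection at $e$ through $U=U^{*}_e$ exactly when $\Psi(U):=(\nabla U^{*})_e\in\got h^{\aut}=\operatorname{Der}(\got n)\cap\got{so}(\got n)$; as $\Psi(U)$ is automatically skew-symmetric, this reduces to $\Psi(U)\in\operatorname{Der}(\got n)$, so $\got s_e=\{U\in\got n:\Psi(U)\in\operatorname{Der}(\got n)\}$. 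Writing $\Psi(U)$ in block form relative to $\got n=\got z\oplus\got v$ via \eqref{eq:nabla_in_2stepnilpo} and \eqref{eq:def_map_j}, I would then impose the derivation identity. Testing it on a central $Z'\in\got z$ and a vector $X_B\in\got v$ collapses to $[j(Z')X,X_B]=0$ for all $Z',X_B$, which by Remark \ref{rem:trivialsubrep} forces the $\got v$-component of $U$ to vanish, so $\got s_e\subset\got z$. For $U=Z\in\got z$ the remaining identity reduces to $[j(Z'),j(Z)]=0$ for all $Z'\in\got z$, and using injectivity of $j$ with the representation property $[j(Z'),j(Z)]=j([Z',Z]_{\got z})$ from Remark \ref{rem:2pasosnrj}, this says $Z$ is central in $(\got z,[\cdot,\cdot]_{\got z})$, i.e. $Z\in\got c$. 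Thus $\got s_e=\got c=\mathcal F_0^e$.

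Non-degeneracy then follows because $\got g=\got z$ is non-degenerate and the $\ad$-invariant metric is non-degenerate on the compact semisimple $\overline{\got g}$, whence $\got c=\overline{\got g}^{\perp}$ is non-degenerate as well. The main obstacle I expect is the explicit covariant-derivative formula for $U^{*}$ and the careful bookkeeping of the derivation identity in the block decomposition; in particular, the decisive point is recognizing that the structural fact $\pi(\got c)\subset\got u$ is exactly what expels the $\got v$-directions from both $\mathcal F_0^e$ and $\got s_e$, pinning both subspaces down to $\got c$.
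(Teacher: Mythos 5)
Your proposal is correct, and while the fixed-point half coincides with the paper's argument, your computation of $\got s_e$ takes a genuinely different route. The paper's proof (Lemma \ref{lemma:lemaaux}) writes a transvection at $e$ as $\tilde V = U^* + \tilde D$ with $U\in\got n$, $D\in\got h^{\aut}$, and then leans on Theorem \ref{teo:16}: decomposing $U = Z_{\overline{\got g}}+Z_{\got c}+X_{\got v}$ and $D = D_{\overline{\got g}}+D_{\got u}$, it tests the transvection equation against $Z^*$ ($Z\in\got g$) and $X^*$ ($X\in\got v$) via Lemma \ref{lemma:nabla}, killing $D_{\overline{\got g}}$, $X_{\got v}$ and finally $Z_{\overline{\got g}}$ (this last step using semisimplicity of $\overline{\got g}$), and it exhibits the explicit transvection $Z^* + \tfrac12\widetilde{\pi(Z)}$ for $Z\in\got c$. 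You instead eliminate the isotropy component altogether through the clean criterion $\got s_e = \{U\in\got n : (\nabla U^*)_e \in \operatorname{Der}(\got n)\}$ (skew-symmetry being automatic for Killing fields), and then impose the derivation identity on the block form of $(\nabla U^*)_e$: the $(\got z,\got v)$-test plus Remark \ref{rem:trivialsubrep} expels the $\got v$-component, and the $(\got v,\got v)$-test plus the representation property $[j(Z'),j(Z)] = j([Z',Z]_{\got z})$ from natural reductivity (Remark \ref{rem:2pasosnrj}) together with injectivity of $j$ forces $Z\in\got c$. I checked the block computation and both tests; they are correct, and the converse inclusion $\got c\subset\got s_e$ comes for free from the same equivalence. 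What your route buys is that it bypasses the structure theorem for $\got h^{\aut}$ in this half and isolates exactly where natural reductivity and injectivity of $j$ enter; what the paper's route buys is uniformity (the same decomposition $\got h^{\aut}=\overline{\got g}\oplus\got u$ drives both halves) and explicit transvections realizing each direction of $\got s_e$. Two small points you should make explicit when writing this up: the identity $(\nabla \tilde A)_e = A$ for $A\in\got h^{\aut}$ (your fixed-point argument via Lemma \ref{lemma:paralleltransport} is fine, or one can derive it from \eqref{eq:nablakilling} as the paper does), and, in the non-degeneracy step, the fact that $\overline{\got g}\perp\got c$, which follows from $\ad$-invariance of $\langle\cdot,\cdot\rangle_{\got z}$ via $\langle[\got g,\got g],\got c\rangle = \langle\got g,[\got g,\got c]\rangle = 0$; combined with non-degeneracy of $\got z$ this yields non-degeneracy of $\got c$.
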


In order to prove this theorem we need some technical results. Let $N$ be a (non necessarily nilpotent) Lie group with a left-invariant metric. Then $N$ can be thought of a subgroup of $\Iso(N)$ via the monomorphism $L : N \to \Iso(N)$, $g\mapsto L_g$, and hence $\got n$ is a Lie subalgebra of $\got{iso}(N)$. Let $\Phi:\got{iso}(N)\to \mathcal K_c(N)$ be the Lie algebra anti-isomorphism defined by \eqref{eq:isokilling}. For $U \in \got n$ we denote by $U^*=\Phi(U)$ the corresponding right-invariant Killing vector field.

Recall that the Koszul form in left-invariant fields becomes
\begin{equation*}
  2\langle \nabla_U V, W\rangle = \langle [U, V], W \rangle - \langle [U, W], V \rangle - \langle [V , W], U \rangle, \qquad U, V, W \in \got n.
\end{equation*}
On the other hand, since $\mathcal L_{K^*}\langle\cdot,\cdot\rangle=0$ for each $K\in \got{iso}(M)$, from the Koszul formula we get that 
\begin{equation}\label{eq:nablakilling}
  2\langle \nabla_{U^*} V^*, W^*\rangle = \langle [U^*, V^*], W^* \rangle + \langle [U^*, W^*], V^* \rangle + \langle [V^*, W^*], U^* \rangle.
\end{equation}
Since $\Phi$ is a Lie algebra anti-isomorphism,  we get 
\begin{align*}
  2  \langle \nabla_{U^*} V^*, W^* \rangle & = -\langle  [U, V]^*, W^* \rangle - \langle [U, W]^*, V \rangle -\langle [V, W]^*, U^* \rangle \\
                    &= \langle [U,V]^*, W^* \rangle - \langle [U, W]^*, V \rangle -\langle [V, W]^*, U^* \rangle - 2\langle [U, V]^*, W^* \rangle.
\end{align*}
Since $U^*_e = U_e$, $V^*_e = V_e$ and $W^*_e = W_e$, we have
\begin{equation*}
    \langle(\nabla_{U^*} V^*)_e, W_e\rangle = \langle (\nabla_U V)_e, W_e \rangle - \langle [U, V]_e, W_e \rangle.
\end{equation*}
We conclude that 
\begin{equation}\label{eq:nablaizqder}
  (\nabla_{U^*} V^*)_e = (\nabla_U V )_e - [U, V]_e = (\nabla_U V)_e + [U^*, V^*]_e.
\end{equation}

\begin{lemma}\label{lemma:nabla}
  Let $(\got g,\got v,\pi)$ be a Lorentzian data set and let $N = N(\got g, \got v,\pi)$ be the associated simply connected Lie group and $\got{n} = \got{g} \oplus \got{v}$ its Lie algebra. For each $U \in \got n$ let $U^*$ be the right-invariant Killing vector field defined by $U$ (i.e. $U^* = \Phi(U)$). Let $X,Y\in \got v$, $Z,Z'\in \got g$. Then
  \begin{enumerate}
    \item $(\nabla_{X^{\ast}} Y^{\ast})_e =  \frac{1}{2}[X^{\ast}, Y^{\ast}]_e = -\frac{1}{2}[X,Y]_e$;
    \item $(\nabla_{X^{\ast}} Z^{\ast})_e = (\nabla_{Z^{\ast}} X^{\ast})_e = - (\frac{1}{2}\pi(Z)X)_e$;
    \item $(\nabla_{Z^{\ast}}Z'^{\ast})_e = 0$.
  \end{enumerate}
\end{lemma}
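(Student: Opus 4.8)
The plan is to reduce everything to the master identity \eqref{eq:nablaizqder}, which expresses the right-invariant covariant derivative at $e$ in terms of the left-invariant one: for $U,V\in\got n$,
\begin{equation*}
  (\nabla_{U^*}V^*)_e = (\nabla_U V)_e - [U,V]_e .
\end{equation*}
Since $N=N(\got g,\got v,\pi)$ comes from a data set, I first recall two facts established earlier. First, the center of $\got n$ is $\got z=\got g$ and the map $j$ of \eqref{eq:def_map_j} coincides with the representation $\pi$; hence the left-invariant Levi-Civita connection of \eqref{eq:nabla_in_2stepnilpo} reads $\nabla_X Y=\tfrac12[X,Y]$, $\nabla_X Z=\nabla_Z X=-\tfrac12\pi(Z)X$ and $\nabla_Z Z'=0$ for $X,Y\in\got v$ and $Z,Z'\in\got g$. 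Second, from the bracket relations \eqref{eq:bracketNR} one has $[\got g,\got n]=0$, so $[X,Z]=0$ and $[Z,Z']=0$ for $X\in\got v$, $Z,Z'\in\got g$, while $[X,Y]\in\got g$ for $X,Y\in\got v$.

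With these in hand the proof is a direct substitution, case by case, keeping in mind that $U^*_e=U_e$ at the identity. For $X,Y\in\got v$, the identity gives $(\nabla_{X^*}Y^*)_e=\tfrac12[X,Y]_e-[X,Y]_e=-\tfrac12[X,Y]_e$; to recognize the first expression in the statement I use that $\Phi$ is an anti-isomorphism, so $[X^*,Y^*]=-[X,Y]^*$ and therefore $\tfrac12[X^*,Y^*]_e=-\tfrac12[X,Y]_e$, matching. For $X\in\got v$ and $Z\in\got g$, since $[X,Z]=0$ the correction term drops out and $(\nabla_{X^*}Z^*)_e=(\nabla_X Z)_e=-\tfrac12(\pi(Z)X)_e$, and the same computation with the roles interchanged (again using $[Z,X]=0$) yields $(\nabla_{Z^*}X^*)_e=(\nabla_Z X)_e=-\tfrac12(\pi(Z)X)_e$. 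Finally, for $Z,Z'\in\got g$ both $\nabla_Z Z'$ and $[Z,Z']$ vanish, so $(\nabla_{Z^*}Z'^*)_e=0$.

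There is no genuine obstacle here: the lemma is purely a bookkeeping consequence of \eqref{eq:nablaizqder}. The only points requiring a little care are the sign conventions coming from the anti-isomorphism $\Phi$ (so that the right-invariant brackets carry the correct sign) and the consistent use of the identification $j=\pi$ and $\got z=\got g$ valid for nilmanifolds arising from a data set. Once these are fixed, each of the three items follows by plugging the relevant case of \eqref{eq:nabla_in_2stepnilpo} and the vanishing brackets into the master formula.
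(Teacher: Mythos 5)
Your proof is correct and follows essentially the same route as the paper: both reduce each case to the identity $(\nabla_{U^*}V^*)_e = (\nabla_U V)_e - [U,V]_e$ from \eqref{eq:nablaizqder}, substitute the left-invariant formulas \eqref{eq:nabla_in_2stepnilpo} with $j=\pi$ and $\got z=\got g$, and use $[\got g,\got n]=0$ to kill the correction terms, with the same sign bookkeeping via the anti-isomorphism $\Phi$ for the first item. No gaps.
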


\begin{proof}
  From \eqref{eq:nabla_in_2stepnilpo} we have that $\nabla_{X}Y=\frac{1}{2}[X,Y]$. Then from \eqref{eq:nablaizqder}, 
  \begin{equation*}
      (\nabla_{X^*}Y^*)_e = \frac{1}{2}[X,Y]_e-[X,Y]_e = -\frac{1}{2}[X,Y]_e = -\frac{1}{2}[X,Y]^*_e = \frac{1}{2}[X^*,Y^*]_e.
  \end{equation*}
  Since $\got g = \got{z}(\got n)$, then $[Z,X] = [Z,Z'] = 0$ and then from \eqref{eq:nabla_in_2stepnilpo} and \eqref{eq:nablaizqder},
  \begin{equation*}
      (\nabla_{X^*}Z^*)_e = (\nabla_{X}Z)_e=(\nabla_{Z}X)_e = (\nabla_{Z^*}X^*)_e = -\left(\frac{1}{2}\pi(Z)X\right)_e
  \end{equation*}
  and $(\nabla_{Z^*}Z'^*)_e = (\nabla_{Z}Z')_e=0$.
\end{proof}

\begin{lemma}\label{lemma:lemaaux} 
  Let $(\got g,\got v,\pi)$ be a Lorentzian data set and suppose that $N = N(\got g, \got v,\pi)$ verifies the hypothesis of Theorem \ref{teo:maintheorem}. Decompose $\got g=\got c\oplus \overline{\got g}$, with $\overline{\got g}=[\got g,\got g]$. Let $\got s$ be the distribution of symmetry of $N$. Then $\got s_e=\got c$.  In particular, $\got s$ is non-degenerate and it coincides with the left-invariant distribution on $N$ defined by $\got c$.
\end{lemma}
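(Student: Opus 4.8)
The plan is to determine the transvection space $\tilde{\got p}^e$ explicitly and read off $\got s_e$ from it. Under the hypotheses of Theorem~\ref{teo:maintheorem} we have $H=H^{\aut}$, so $\Iso(N)=N\rtimes H^{\aut}$ and $\got{iso}(N)=\got n\rtimes\got h^{\aut}$; consequently every complete Killing field is $\tilde U=\Phi(U)$ for a unique $U=W+D$ with $W\in\got n$ and $D\in\got h^{\aut}$. Since $\got h^{\aut}=\got{iso}(N)_e$ we have $\tilde D_e=0$, hence $\tilde U_e=W^{\ast}_e=W_e$, and by linearity $(\nabla\tilde U)_e=(\nabla W^{\ast})_e+(\nabla\tilde D)_e$ as endomorphisms of $T_eN\simeq\got n$. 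So I would compute each summand and impose the transvection condition $(\nabla\tilde U)_e=0$.

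For the first summand I would write $W=X+Z$ with $X\in\got v$, $Z\in\got g$, use $(\nabla_v W^{\ast})_e=(\nabla_{v^{\ast}}W^{\ast})_e$, and read off from Lemma~\ref{lemma:nabla} the block form of $(\nabla W^{\ast})_e$ relative to $\got n=\got g\oplus\got v$: on $\got g$ it is $Z'\mapsto-\tfrac12\pi(Z')X\in\got v$, and on $\got v$ it is $X'\mapsto-\tfrac12[X',X]-\tfrac12\pi(Z)X'$ (with $\got g$-part $-\tfrac12[X',X]$ and $\got v$-part $-\tfrac12\pi(Z)X'$). For the second summand, I would observe that the flow of $\tilde D$ is the one-parameter group $\exp(tD)\in H^{\aut}$, which acts on $N$ by automorphisms fixing $e$ and hence with differential $e^{tD}$ on $\got n$; combined with item~\ref{item1lemma} of Lemma~\ref{lemma:paralleltransport} (parallel transport along the constant curve at the fixed point $e$ is the identity, so $(d\exp(tD))_e=e^{t(\nabla\tilde D)_e}$) this yields $(\nabla\tilde D)_e=D$, acting block-diagonally as $(A,B)$ on $\got g\oplus\got v$.

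Imposing $(\nabla\tilde U)_e=0$ then splits into four block equations, which I would solve in turn: the $\got g\to\got g$ block gives $A=0$; the $\got g\to\got v$ block gives $\pi(Z')X=0$ for all $Z'\in\got g$, whence $X=0$ since $\pi$ has no trivial subrepresentation; the $\got v\to\got g$ block is then automatic; and the $\got v\to\got v$ block gives $B=\tfrac12\pi(Z)$. Writing $D=\varphi(Z_0)+(0,B_0)$ with $Z_0\in\overline{\got g}$, $B_0\in\got u$ as in Theorem~\ref{teo:16}, the condition $A=0$ forces $\ad(Z_0)|_{\got g}=0$, i.e.\ $Z_0\in\got c\cap\overline{\got g}=\{0\}$, so $D=B_0\in\got u$ and $B=B_0\in\End_\pi(\got v)$. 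Hence $\tfrac12\pi(Z)$ must commute with every $\pi(Z')$, i.e.\ $\pi([Z,Z'])=0$ for all $Z'$, which by faithfulness of $\pi$ means $[Z,Z']=0$, that is $Z\in\got c$. Conversely, for every $Z\in\got c$ one has $\tfrac12\pi(Z)\in\got u$, and $U=Z+\tfrac12\pi(Z)$ is a transvection. Since $\tilde U_e=Z_e=Z$, this shows $\tilde{\got p}^e$ is parametrized by $\got c$ and $\got s_e=\got c$.

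Finally I would deduce the remaining assertions. Ad-invariance of $\langle\cdot,\cdot\rangle_{\got g}$ gives $\got c\perp\overline{\got g}$: for central $Z$ and $[X,Y]\in\overline{\got g}$ one has $\langle Z,[X,Y]\rangle_{\got g}=-\langle[X,Z],Y\rangle_{\got g}=0$. Thus any $Z\in\got c$ orthogonal to all of $\got c$ is orthogonal to $\got g=\got c\oplus\overline{\got g}$ and hence vanishes by non-degeneracy of $\langle\cdot,\cdot\rangle_{\got g}$; so $\got s_e=\got c$ is non-degenerate. Since $\got s$ is $\Iso(N)$-invariant and the left translations act transitively by isometries, $\got s_g=(dL_g)_e(\got c)$ for all $g\in N$, so $\got s$ is the left-invariant distribution defined by $\got c$. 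The step I expect to require the most care is the identification $(\nabla\tilde D)_e=D$ together with the reduction of the transvection condition to the block equations, and then verifying that the intertwining constraint defining $\got u$ is exactly what forces $Z\in\got c$.
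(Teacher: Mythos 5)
Your proof is correct, and its skeleton matches the paper's: write a complete Killing field as $\tilde U=W^{\ast}+\tilde D$ with $W\in\got n$, $D\in\got h^{\aut}$ (using $H=H^{\aut}$, so $\got{iso}(N)=\got n\rtimes\got h^{\aut}$), compute the $\got n$-contribution from Lemma~\ref{lemma:nabla}, impose the transvection condition, and funnel the outcome through Theorem~\ref{teo:16} plus faithfulness of $\pi$ to land in $\got c$. The genuine difference is how the isotropy contribution is computed. The paper derives $(\nabla_{Z^{\ast}}\tilde D)_e=D(Z)_e$ from the Koszul-type identity \eqref{eq:nablakilling} for Killing fields (using $\tilde D_e=0$ and skew-symmetry of $D$), doing this separately in $\got g$- and $\got v$-directions; you instead get $(\nabla\tilde D)_e=D$ in one stroke by linearizing the flow at the fixed point: the flow of $\tilde D$ is $\exp(tD)\in H^{\aut}$, acting by automorphisms with $(d\exp(tD))_e=e^{tD}$, and Lemma~\ref{lemma:paralleltransport}(1) along the constant curve at $e$ gives $(d\varphi_t)_e=e^{t(\nabla\tilde D)_e}$, hence $(\nabla\tilde D)_e=D$. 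This is cleaner, reuses a lemma already proved in Section~5, and makes transparent why the transvection condition becomes block equations for the pair $(A,B)$. Your endgame also differs mildly in organization: you keep $Z\in\got g$ undecomposed and recover $Z\in\got c$ from the constraint $B=\tfrac12\pi(Z)\in\End_\pi(\got v)$ (after $A=0$ forces $D\in\got u$), whereas the paper writes $Z=Z_{\overline{\got g}}+Z_{\got c}$, $D=D_{\overline{\got g}}+D_{\got u}$ and eliminates the $\overline{\got g}$-components sequentially by a bracket argument; these are equivalent. Finally, you explicitly verify the two "in particular" assertions — non-degeneracy of $\got s_e$ via the orthogonality $\got c\perp\overline{\got g}$ coming from $\ad$-invariance, and the identification of $\got s$ with the left-invariant distribution generated by $\got c$ — which the paper's proof leaves implicit; that is a worthwhile addition rather than a deviation.
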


\begin{proof} 
  Under the hypothesis of Theorem \ref{teo:maintheorem}, 
  \begin{equation*}
      \Iso(N) = \Iso^{\operatorname{aut}}(N) \simeq N \rtimes H
  \end{equation*}
  (cf.~ \cite[Proposition 3]{delbarco2014}) and so $\got{iso}(N)\simeq \got n\rtimes \got h$. Keeping the notations we have used so far, during the proof we shall denote by $\tilde U = \Phi(U)$ for a generic $U\in \got{iso}(\got n)$ and by $U^* = \Phi(U)$ the right invariant Killing vector field defined by an element $U\in \got n \subset \got{iso}(\got n)$, where $\Phi$ is the anti-isomorphism defined by \eqref{eq:isokilling}. 

  Let $v \in \got s_e \subset T_eN$ and let $\tilde{V} \in \mathcal K_c(N)$ be a (complete) transvection such that $\tilde{V}_e = v$. Then $\tilde{V} = \Phi(V)$ for some $V\in \got{iso}(N)$ and $V = U + D$, with $U\in \got n$ and $D\in \got h$. Hence we can decompose 
  \begin{equation*}
      \tilde{V} = U^{\ast} + \tilde{D}.
  \end{equation*}
  Recall that $\tilde{D}_e = 0$. 

  According to the decompositions
  \begin{equation*}
    \got n = \got g \oplus \got v, \ \ \got g = \overline{\got g } \oplus \got c, \ \ \got h = \overline{\got g} \oplus \got u,
  \end{equation*}
  we can write
  \begin{equation*}
      U = Z_{\overline{\got g}} + Z_{\got c} + X_{\got v}, \qquad D = D_{\overline{\got g}} + D_{\got u},
  \end{equation*}
  and so
  \begin{equation}
    \tilde{V} = Z^{\ast}_{\overline{\got g}} + Z^{\ast}_{\got c} + X^{\ast}_{\got v} + \tilde{D}_{\overline{\got g}} + \tilde{D}_{\got u}.
  \end{equation}

  Let $Z\in \got g$. Then 
  \begin{equation*}\label{eq:auxtrans}
    0 = (\nabla_{Z^{\ast}} \tilde{V})_e = (\nabla_{Z^{\ast}} U^{*})_e + (\nabla_{Z^{\ast}} \tilde{D})_e.
  \end{equation*}
  From Lemma \ref{lemma:nabla} we have that 
  \begin{equation*}
      (\nabla_{Z^{\ast}} U^*)_e = (\nabla_{Z^{\ast}} (Z^*_{\overline{\got g}} + Z^*_{\got c}))_e + (\nabla_{Z^{\ast}} X^*_{\got v})_e = 0 - \frac{1}{2}\left( \pi(Z) X_{\got v} \right)_e.
  \end{equation*}

  Let now $W\in \got n$. Then from \eqref{eq:nablakilling}, and since $\tilde{D}_e=0$, 
  \begin{align*}
    2\langle(\nabla_{Z^{\ast}} \tilde{D})_e, W^*_e \rangle& = \langle[Z^*, \tilde{D}]_e, W^*_e\rangle + \langle [Z^*,W^*]_e, \tilde{D}_e\rangle + \langle [\tilde{D}, W^*]_e, Z^*_e \rangle\\
    &= \langle [Z^*, \tilde{D}]_e, W^*_e\rangle + \langle [\tilde{D}, W^*]_e, Z^*_e \rangle.
  \end{align*}
  Now, from \eqref{eq:corcheteiso}, $[Z^*,\tilde{D}]_e = -\Phi([Z, D])_e = \Phi(D(Z))_e = D(Z)_e$, and in the same way $[\tilde{D}, W^*]_e = -D(W)_e$. Recall that $D \in \got h=\mathrm{Der}(\got n)\cap \got{so}(\got n)$, so
  \begin{equation*}
      2\langle(\nabla_{Z^{\ast}} \tilde{D})_e, W_e \rangle = \langle D(Z)_e, W_e\rangle - \langle D(W)_e, Z_e \rangle = 2\langle D(Z)_e, W_e \rangle.
  \end{equation*}
  Since $W \in \got n$ is arbitrary, we conclude that $(\nabla_{Z^*} \tilde{D})_e = D(Z)_e = D_{\overline{\got g}}(Z)_e + D_{\got u}(Z)_e$. From Theorem \ref{teo:16}, for $Z\in \got g$, $D_{\got u}(Z)=0$ and $D_{\overline{\got g}}(Z)\in \got g$. So
  \begin{equation}\label{eq:nablaeng}
    (\nabla_{Z^*} \tilde{D})_e = -\frac{1}{2} (\pi(Z) X_{\got v})_e + D_{\overline{\got g}}(Z)_e
  \end{equation}
  and then 
  \begin{equation*}
      -\frac{1}{2} (\pi(Z) X_{\got v})_e + D_{\overline{\got g}}(Z)_e = 0.
  \end{equation*}
  But $\pi(Z) X_{\got v} \in \got v$ and $D_{\overline{\got g}}(Z) \in \got g$, then  we must have $\pi(Z) X_{\got v} = 0$ and $D_{\overline{\got g}}(Z) = 0$. Since $Z \in \got g$ is arbitrary we conclude that $D_{\overline{\got g}} = 0$ and $X_{\got v} \in \cap_{Z'\in \got g}\pi(Z') = \{0\}$ and so $X_{\got v}^* = 0 $ and $\tilde{D}_{\overline{\got g}} = 0$.
  Therefore
  \begin{equation*}
    \tilde{V} = Z^{\ast}_{\overline{\got g}} + Z^{\ast}_{\got c} + \tilde{D}_{\got u}.
  \end{equation*}

  Let now $X \in \got v$. Then, with the same argument as before, we have that 
  \begin{equation}\label{eq:nablaenv}
    \begin{array}{rl}
     (\nabla_{X^{\ast}} \tilde{V})_e   & = (\nabla_{X^{\ast}} Z^{\ast}_{\overline{\got g}})_e + (\nabla_{X^{\ast}} Z^{\ast}_{\got c})_e + (\nabla_{X^{\ast}} \tilde{D}_{\got u})_e \\
                                       &= - \frac{1}{2} (\pi(Z_{\overline{\got g}}) X)_e - \frac{1}{2} (\pi(Z_{\got c}) X)_e  + \tilde{D}_{\got u}(X)_e. \\
    \end{array}
  \end{equation}
  It follows that $\pi(Z_{\overline{\got g}}) = -\pi(Z_{\got c}) + 2 D_{\got u}$.  Take an arbitrary $Z\in \got g$. Then $[\pi(Z_c), \pi(Z)] = \pi([Z_c, Z]_{\got g})=0$ and from Theorem \ref{teo:16}, $[\tilde{D}_{\got u}, \pi(Z)]=0$. Then $[Z_{\overline{g}}, Z]_{\got g}=0$, and since $\got g$ is semisimple we must have $Z_{\overline{\got g}}=0$.

  We conclude that 
  \begin{equation*}
      \tilde{V} = Z^{\ast}_{\got c} + \tilde{D}_{\got u}
  \end{equation*}
  and hence $v = (Z_{\got c})_e$ with $Z_{\got c} \in \got c$. 

  Now, let $Z\in \got c$. Observe that $\pi(Z) \in \got{so}(\got v)$ and for each $Z' \in \got g$, $[\pi(Z'), \pi(Z)]=\pi([Z', Z]_{\got g})=0$. Then from Theorem \ref{teo:maintheorem} we get that $\tilde{D}_{\got u} = \frac{1}{2}\pi(Z) \in \got u$. Let $\tilde{V} = Z^* + \tilde{D}_{\got u}$. Then $\tilde{V}_e = Z_e$ and from equations \eqref{eq:nablaeng} and \eqref{eq:nablaenv} it follows that $(\nabla \tilde{V})_e=0$. Hence $Z_e\in \got s_e$. 
\end{proof}

\begin{proof}[Proof of Theorem \ref{teo:maintheorem}]
  From Lemma \ref{lemma:lemaaux}, we have that $\got s_e = \{ Z_e : Z\in \got c \}$.

  Observe that since $N$ is connected, $\Iso(N)_0 = N\rtimes H_0$, where $H_0$ is described in Corollary \ref{cor:H0}. Hence, via the identification of $\got n$ with a subalgebra of $\got{iso}(N)$, $\got n$ is $\Ad(H_0)$-invariant and it is standard to see that $\rho^e_0(h)(V_e) = \Ad(h)(V)_e$ for each $h\in H_0$ and each $V\in \got n$. Since $H_0$ is connected, it follows that $V_e \in \mathcal F^e_0$ if and only if $[D, V]_{\got{iso}(N)} = D(V) = 0$ for all $D\in \got h$.

  Suppose $V = Z_{\got c} + Z_{\overline{\got g}} + X_{\got v}\in \got n$, where $Z_{\got c}\in \got c$, $Z_{\overline{\got g}}\in \overline{\got g}$ and $X_{\got v}\in \got v$. If $D\in \got h$, from Theorem \ref{teo:16}, $D = (\ad_{\got g}(Z), \pi(Z) + B)\in \got{so}(\got g)\times \got{so}(\got v)$ with $Z\in \overline{\got g}$ and $B\in \got u$. So
  \begin{equation}\label{eq:corcheteison}
    [D,V]_{\got{iso}(N)} = [Z, Z_{\overline{\got g}}]_{\got g} + \pi(Z)(X_{\got v}) + B(X_{\got v}).
  \end{equation}
  It follows immediately that if $V = Z_{\got c}\in \got c$, then $[D, V]_{\got{iso}(N)} = 0$ for each $D\in \got h$. Hence $\{ Z_e : Z\in \got c \} \subset \mathcal F^e_0$.

  On the other hand, if $V$ is such that $[D, V]_{\got{iso}(N)}=0$ for each $D\in \got h$ then, in particular, $[Z, V]_{\got{iso}(N)}=0$ for each $Z\in \overline{\got g}$ and $[B, V]_{\got{iso}(N)}=0$ for each $B\in \got u$. Taking $B=0$ in \eqref{eq:corcheteison}, we have that $[Z, Z_{\overline{\got g}}]_{\got g}=0$ and $\pi(Z)(X_{\got v})=0$ for each $Z\in \overline{\got g}$. Hence $X_{\overline{\got g}}=0$, and $Z_{\got v}\in \cap_{Z\in \overline{\got g}}\ker(\pi(Z)).$

  Taking $Z = 0$ in \eqref{eq:corcheteison}, we have that $B(X_{\got v})=0$ for each $B\in \got u$. Since $\pi(Z)\in \got u$ for each $Z\in \got c$, it follows that 
  \begin{equation*}
      X_{\got v}\in \bigcap_{Z\in \got g}\ker(\pi(Z)).
  \end{equation*}
  Then $X_{\got v}=0$ and so $V = Z_{\got c}\in \got c$.
\end{proof}

\bibliographystyle{amsalpha}  
\bibliography{references}  

\end{document}